\documentclass[reqno]{amsart}

\let\tmp\oddsidemargin
\let\oddsidemargin\evensidemargin
\let\evensidemargin\tmp
\reversemarginpar

\usepackage{float}
\usepackage{todonotes}
\usepackage{bbm}
\usepackage{bm}

\usepackage[utf8]{inputenc} 

\usepackage{amsthm}

\usepackage{imakeidx} 
\usepackage{verbatim}
\usepackage{graphicx} 
\usepackage{enumerate} 
\usepackage{hyperref}
\hypersetup{
    colorlinks,
    filecolor=black,
    linkcolor=blue,
    urlcolor=black}


\newtheorem{theorem}{Theorem}[section]
\newtheorem{proposition}[theorem]{Proposition}
\newtheorem{lemma}[theorem]{Lemma}

\newtheorem{corollary}[theorem]{Corollary} 

\theoremstyle{definition}

\newtheorem{definition}[theorem]{Definition}

\newtheorem{remarkx}[theorem]{Remark}
	{\popQED\\}{\endremarkx}


\newcommand\Om{\Omega}



\newcommand\ObsDynOp{A}

\newcommand\ObsDynSp{\mathcal{H}} 

\def\bdem{\begin{proof}}
\def\edem{\end{proof}}
\def\bequ{\begin{equation}}
\def\eequ{\end{equation}}


\newcommand{\C}{\mathbb{C}}

\definecolor{red}{RGB}{255,0,0}

         
\newcommand{\calB}{\mathcal B}

\newcommand{\calH}{\mathcal H}

\newcommand{\calS}{\mathcal S}         
\newcommand{\calT}{\mathcal T}



\newcommand{\HH}{\mathcal{H}}

\newcommand{\R}{\mathbb{R}}

\newcommand{\N}{\mathbb{N}}
\newcommand{\CC}{\mathbb{C}}

\newcommand{\G}{\mathcal{G}}
\newcommand {\D} {\mathbb D}

\usepackage{makeidx}
\makeindex

\usepackage{amsmath,amssymb}


\title{Continuous and discrete dynamical sampling}
\author{Roc\'\i o D\'\i az Mart\'\i n, Ivan Medri, Ursula Molter}
\date{}

\begin{document}

\maketitle

\begin{abstract} In this paper we study the continuous dynamical sampling problem at infinite time in a complex Hilbert space $\mathcal{H}$. We find necessary and sufficient conditions on a bounded linear  operator $A\in\mathcal{B}(\mathcal{H})$ and a set of vectors $\G\subset \mathcal{H}$, in order to obtain that $\{e^{tA}g\}_{g\in\G, t\in[0,\infty)}$ is a semi-continuous frame for $\mathcal{H}$. We study if it is possible to discretize the time variable $t$ and still have a frame for $\mathcal{H}$. We also relate the continuous iteration $e^{tA}$ on a set $\G$ to the discrete iteration $(A^\prime)^n$ on $\G^\prime$ for an adequate operator $A^\prime$ and set $\G^\prime\subset \mathcal{H}$.   
\end{abstract}


\section{Introduction}

\subsection{The dynamical sampling problem: discrete and continuous time}
\
\
The classical sampling and reconstruction problem consists in recovering a function $f:\calH\rightarrow \C$ from the knowledge of its values at certain points of the spatial domain $\calH$. In the dynamical sampling problem, the set of \textit{space samples} is replaced by a set of \textit{space-time samples}.

In order to state the general dynamical sampling problem, let $f$ be a function in a complex separable Hilbert space $(\mathcal{H}, \langle\cdot,\cdot\rangle)$. Assume that $f$ evolves through an evolution operator $A: \mathcal H \to \mathcal H$ so that the function at time $t$ has evolved to become $f^{(t)}=A^{(t)}f$. The concrete definition of $A^{(t)}$ will depend on whether the evolution happens in discrete or continuous time and will be introduced later. 
Let $\G=\{g^j\}_{j\in J}$ be a set of vectors in  $\HH$,  where $J$ is a countable set (finite or infinite) of indexes.
Consider a set $\calT\subseteq[0, =\infty) $ which can be of four types: a discrete finite set, $\calT=\{0,1,\dots,N\}$ (for $N\in\N\cup\{0\}$) or a discrete infinite  set $\calT=\N\cup \{0\}$, or a finite interval, $\calT=[0,L]$ (where ($L\in[0,\infty)$) or $\calT=[0,\infty)$.
The time-space sample at time $t\in \calT$ and location $j\in J$, is the value 
$$f^{(t)}(j):=\langle A^{(t)}f,g^j\rangle.$$ 
In this way we associate to each pair $(j,t)\in J\times \calT$
 a sample value.  
The general dynamical sampling problem can then be described as: \textit{Under what conditions on the operator $A$, the vectors $g^j$ and  the set $J\times \calT$, can  every vector $f$ in the Hilbert space $\calH$ be recovered in a stable way from the samples }
\begin{equation}\label{set_samples}
 \{f^{(t)}(j): \ (j,t) \in J\times \calT\}.    
\end{equation}
Here {\em stable way} means, as usual, that the reconstruction is robust under small perturbations. We will make this notion precise in a moment.

Throughout the paper, we assume that $A\in\calB(\HH)$.  
 For each $j\in J$, let $S_j$ be the operator 
\begin{equation*}
    S_j: \overline{\text{span}}\{g^j: \, j\in J\}\rightarrow L^2(\calT,\mu_{\calT}) \qquad
    (S_j f) (t):= f^{(t)}(j),
\end{equation*}
where $\mu_\calT$ is either the discrete measure or the Lebesgue measure depending on the set $\calT$. Define $\calS$ to be the operator $\calS:=\bigoplus_{j\in J}S_j$. 
We say that $f$ can be recovered from \eqref{set_samples} in a stable way if  $\calS$ is a bounded and invertible (on its range) linear operator. That is, if and only if there exist constants $c,C>0$ such that for all $f\in\mathcal{H}$
\begin{equation}\label{frame_cond}
c\|f\|^2_2\leq\|\mathcal S f\|^2_2=\sum\limits_{j\in J} \|S_jf\|_{L^2(\mu_\calT)}^2= \sum\limits_{j \in J} \int_{\calT}|\langle A^{(t)}f, g^j\rangle|^2 d\mu_\calT(t)\leq C\|f\|^2_2.
\end{equation}

The choices for $A^{(t)}, t \in \calT$ will be as follows: The  \textit{discrete dynamical sampling problem} is the case in which the iterations of the operator $A$ are discrete   and we take $A^{(n)}:=A^n$ for $n\in \calT$. Instead, for the \textit{continuous dynamical sampling problem} we  take $A^{(t)}:=e^{tA}$ for  $t\in \calT$, where $\calT=[0,L]$ or $\calT=[0,\infty)$.

The problem of dynamical sampling for discrete time was the first to be formulated. It  was deeply studied for example in \cite{ItNormOp,Tang, aaa,a1,DSOFIS, suarez}. In particular in \cite{Tang, DSOFIS, suarez} necessary and sufficient conditions on a normal operator $A$ and  $\G\subset \mathcal{H}$ are obtained, so that $\{A^n  g\}_{g\in \G, n\in\N}$ is a frame for $\mathcal{H}$ when $|\G|<\infty$.  
        
The problem of dynamical sampling for continuous time was originally posed for normal operators in \cite{FIACPO}. There, since $A$ is normal, the authors work with continuous powers $\{A^t\}_{t\in[0,L]}$  defined though the Spectral Theorem. When $A$ is a self-adjoint and strictly positive operator, the operators $A^t$ coincide with $e^{t\widetilde{A}}$ for an appropriate choice of  $\widetilde{A}\in\calB(X)$, for all $t\geq 0$. When $A$ is not self-adjoint or not strictly positive, it is not clear how to find  (or decide if there exists) an appropriate $\widetilde{A}$ that satifies the equality (see Definition~\ref{normal-eat}).  

Since $e^{tA}=\sum_{n=0}^\infty \frac{t^n}{n!} A^n$ is always defined for bounded operators (even for non normal ones, see \eqref{semi}), in this article we will work with $e^{tA}$ (instead of $A^t$). This choice is also justified by the role of  $e^{tA}$  in continuous dynamical systems $\dot{x}(t)=Ax(t)$ as shown in \cite{dmm}. 
Hence we will not require (a priori) that $A$ is normal. Further, since $e^{tA}$ is always invertible, the invertibility of $A$ will not be necessary.  Note that, as pointed out before, in  general, it is not true that $e^{tA} = \tilde A^t$,  and hence the results from \cite{FIACPO} cannot be obtained immediately, although most of them will still be valid. 

For the discrete dynamical problem  ($\calT=\{0,1,...,N\}$ or $\calT=\N\cup\{0\}$),  condition  \eqref{frame_cond} becomes
\begin{equation*}
c\|f\|^2_2\leq \sum_{g\in\G} \sum_{n\in\calT}|\langle f, (A^*)^n g\rangle|^2 \leq C\|f\|^2_2, \qquad (\forall f\in\mathcal{H}),
\end{equation*}
where $A^*$ is the adjoint operator of $A$ (see for example  \cite[Lemma 1.2]{Tang}), which is exactly the condition for the set $\{(A^*)^n g\}_{g\in\G, n\in \calT}$ in to. be a {\em frame} in $\HH$.
For the continuous dynamical problem, whith $\calT=[0,L]$ or $\calT=[0,\infty)$, condition \eqref{frame_cond} can be written as
\begin{equation*}
c\|f\|^2_2\leq\sum\limits_{g\in \G} \int_{\calT}|\langle f, e^{tA^*}g\rangle|^2 d\mu_\calT(t)\leq C\|f\|^2_2 \qquad (\forall f\in\mathcal{H}).    
\end{equation*}
This condition says that  $\{ e^{tA^*}g\}_{g\in \G, t\in \calT}$ is a \textit{semi-continuous} frame for $\HH$ (see for example \cite{FS18}). For simplicity of notation we will work with $\{e^{tA}g\}$ and $\{A^ng\}$ instead of $\{e^{tA^*}g\}$ and $\{(A^*)^ng\}$.

\subsection{Motivations, Contributions and Organization}\

The  goal of this paper is to find necessary and sufficient conditions for the continuous time dynamical sampling problem to be solvable with only a finite number of spatial samples, that is, $|\G|<\infty$. Since we work on an infinite dimensional Hilbert space, the only possibility then is to have an infinite number of time samples. When the time parameter is continuous (either $[0,\infty)$ or $[0,L]$), we always have infinite time samples. However, by the results in \cite{FIACPO}, one can show that  for $t \in [0,L]$ in a finite interval, $\{e^{tA}g\}_{g\in\G, t\in[0,L]}$ is a frame if and only if  $\{e^{tA}g\}_{g\in\G,t\in T}$ is a frame, with $T$ a finite set of points. Therefore, when considering a finite number of space samples, the only possibility to obtain a solution will be to sample considering an infinite interval of time, that is $\mathcal{T}=[0,\infty)$. 

However, for operators giving rise to exponentially stable semigroups, it can be shown  (see Corollary \ref{exp stab implies G inf}), that even in this case, it will never suffice to take finite space samples. Instead, for operators that generate non-exponentially stable semigroups we are able to obtain positive results.  
Our approach here is to relate the continuous problem to the discrete one.  By establishing the sought after  relations, we are able to translate the results from the discrete time dynamical sampling problem to the continuous one.

Even though the main objective is to reduce the spatial samples to a finite number, as a consequence of our results  we are also able to reduce the amount of samples but in the time variable. We obtain this through a discretization procedure for all operators; giving rise to exponentially stable or non-exponentially stable semigroups. In fact,  for the exponentially stable case we show that sampling up to infinite in time is equivalent to only sampling at a finite set of time values (see Section \ref{discretization}).

Finally, we were able to establish the precise connection between the discrete dynamical sampling problem and the continuous one, obtaining Theorems \ref{teo_bon1} and Theorem \ref{Cont disc normal op}. We do so by first relating both problems and then adapt results from one to the other.

The paper is organized as follows: In Section \ref{Preliminaries} we set the background needed to state and prove our results. %
In Section \ref{discretization} we show how to   discretize the sampling process without altering the dynamics (Theorems \ref{ScToDscr} and \ref{ScToDscrInfiniteTime}, and Corollaries \ref{exponentially stable if only if infinite implies finite} and \ref{exp stab implies G inf}). 
In Section \ref{diagonalization}, we show the connection between the discrete dynamical sampling problem and the continuous one via a transformation in the dynamics (see Theorems \ref{teo_bon}, \ref{teo_bon1} and \ref{Cont disc normal op}). As a consequence we then can translate the results from the discrete time dynamical sampling problem to the continuous one. This is done in Section \ref{sec_suarez}. Necessary and sufficient conditions for $\{e^{tA}g\}_{g\in\G, t\in[0,\infty)}$ to be a semi-continuous frame are the content of Theorem \ref{suarez_cont}.  
Some of the results of this paper were announced without proofs in \cite{dmm}.

\section{Preliminaries}\label{Preliminaries}

\subsection{Reproducing Kernel Hilbert Spaces and Hardy Spaces }\

\begin{definition} A nonzero Hilbert space $\HH$ of analytic functions on a plane domain $\Omega\subset\C$ is said to be a  reproducing kernel Hilbert space (RKHS) if the point evaluation is a continuous linear functional on $\HH$. By the Riesz Representation Theorem, in this situation, for every $w\in \Omega$, there exists a function ({\em reproducing Kernel}\,) $z\mapsto k_w(z)$ belonging to $\HH$, such that, for every $f\in \HH$,
\begin{equation}
 f(w)=\langle f,k_w\rangle.   
\end{equation}
Note that in particular, $\|k_w\|^2=k_w(w)$, $\forall w\in \Omega$.
\end{definition}

\begin{definition}
    A sequence of vectors $\{\varphi_n\}$ in a Hilbert space $\mathcal{H}$ is said to be
        a \textit{Riesz sequence} if there exist positive constants $c$ and $C$ such that for every finite sequence $(a_n)$ 
        \begin{equation*}\label{rdm-riesz_cond}
            c\sum |a_n|^2\leq \|\sum a_n \varphi_n\|^2 \leq C\sum |a_n|^2.
        \end{equation*}
    If in addition  $\{\varphi_n\}$ is a complete system in $\mathcal{H}$, it is called
      a \textit{Riesz basis}. 
      \end{definition}

\begin{definition}\label{def_interpolating} Let $\HH$ be a RKHS of analytic functions on a plane domain $\Omega\subset\C$, with reproducing kernel $w\mapsto k_w$ (for $w\in \Omega$). Let $\Lambda$ be a discrete subset of complex numbers in $\Omega$.
Let $\{f_\lambda\}_{\lambda\in\Lambda}\subset \HH$ the subset of normalized reproducing kernels defined by
\begin{equation*}
    f_\lambda(z):=\frac{k_\lambda(z)}{k_\lambda(\lambda)}  \qquad (z\in\Omega).     
\end{equation*}
Denote by
\begin{equation}
    \ell^2_{\Lambda}:=\left\{ (c_{\lambda})_{\lambda\in\Lambda} : \, \sum_{\lambda\in\Lambda}\frac{|c_\lambda|^2}{k_\lambda(\lambda)}<\infty \right\}.
\end{equation}
\noindent       $\Lambda$ is said to be a \textit{(complete) interpolating sequence} for $\HH$ if $\{f_\lambda\}_{\lambda\in\Lambda}$ is a Riesz sequence (basis) in $\HH$. That is, for every sequence $(c_\lambda)\in \ell^2_{\Lambda}$ there a (unique) $f\in \HH$ such that $f(\lambda)=c_\lambda$ for all $\lambda\in\Lambda$.
\end{definition}

Hardy spaces are a particular example of RKHS. We recall the notions of Hardy spaces to set the notation. As a reference see for example \cite{Hoffman Hardy}.

\subsubsection{%
{Hardy space of the right half plane}}

\begin{definition}
  Let  
      \begin{equation*}
\C_+:=\left\{z = x+iy \in \C: \, x=\mathop{\mathrm{Re}}(z)>0\right\},       
\end{equation*}
be the right complex half plane.  The Hardy space $H^2(\C_+)$ can
be defined as the set of all analytic functions $f:\C_+\rightarrow \C$ such that
    \[\|f\|_{H^2(\C_+)} := \left(\sup_{x>0}\int_{-\infty}^\infty |f(x+iy)|^2 dy\right)^{1/2} < \infty. \] 
\end{definition}

If $f\in H^2(\C_+)$, its boundary values $\widetilde{f}(iy) := \lim_{x\rightarrow 0^+} f(x+iy)$ , are defined  almost everywhere, and the boundary function $\widetilde{f}$ lies in $L^2(i\R)$ and satisfies $\|\widetilde{f}\|_{L^2(i\R)}= \|f\|_{H^2(\C_+)}$. We  identify $f$ and $\widetilde{f}$, and thus $H^2(\C_+)$ can naturally be seen as a closed subspace of $L^2(i\R)$ and hence a Hilbert space.

With the  notation above we can define the inner product on $H^2(\C_{+})$  by
\begin{equation*}\label{innerprod}
    \langle f, g\rangle_{H^2(\C_+)}:=\int_{-\infty}^\infty \widetilde{f}(iy)\overline{\widetilde{g}(iy)} \ dy,
\end{equation*}
which makes $H^2(\C_+)$ a RKHS, with reproducing kernel
\begin{equation*}
 k^{\C_+}_s(z):=\frac{1}{2\pi(z+\overline{s})}  \quad (s\in \C_+) \qquad \text{with norm} \qquad  \|k_s\|_{H^2(\C_+)}^2= \frac{1}{4\pi\mathop{\mathrm{Re}}(s)}.
\end{equation*}

\subsubsection{
{Hardy space of the disc}}

\begin{definition}
    The Hardy space $H^2(\D)$ of the disc $\D$, given by  
    \begin{equation*}
        \D:=\left\{z\in \C: |z|<1 \right\},       
    \end{equation*} 
   is defined as the set of all analytic functions $f:\D\rightarrow \C$ such that
    \[\|f\|_{H^2(\D)} := \left(\sup_{r<1}\frac{1}{2\pi}\int_{0}^{2\pi} |f(re^{i\omega})|^2 d\omega\right)^{1/2} < \infty. \]
\end{definition}

Again, if $f\in H^2(\D)$, its boundary values $\widetilde{f}(e^{i\omega}) := \lim_{r\rightarrow 1} f(re^{i\omega})$ exist almost everywhere, and the boundary function $\widetilde{f}$ lies in $L^2(\mathbb{T})$ and satisfies $\|\widetilde{f}\|_{L^2(\mathbb{T})}= \|f\|_{H^2(\D)}$. We  identify $f$ and $\widetilde{f}$, and thus consider $H^2(\D)$ as a closed subspace of $L^2(\mathbb{T})$, with
 inner product 
\begin{equation*}\label{innerproddisc}
    \langle f, g\rangle_{H^2(\D)}:=\frac{1}{2\pi}\int_{0}^{2 \pi} \widetilde{f}(e^{i\omega})\overline{\widetilde{g}(e^{i\omega})} \ d\omega,
\end{equation*}
which as before makes  $H^2(\D)$ a RKHS with kernel, 
\begin{equation*}
k^{\mathbb{D}}_s(z)=\frac{1}{1-z\overline{s}}  \quad (s\in \D) \qquad \text{with norm} \qquad  \|k_s\|_{H^2(\D)}^2= \frac{1}{1-|s|^2}.
\end{equation*}

\subsubsection{
{Relations between Hardy Spaces.}}\ 

There is a natural isometric isomorphism between Hardy spaces on the half-plane $\C_+$ and on the disc $\D$ induced  by the self-inverse bijection given by 
\begin{equation}\label{M del D en C}
\mathfrak{h}:\mathbb{D}\to\C_+ \qquad \mathfrak{h}(z):=\frac{1-z}{1+z}.    
\end{equation}
It is easy to see that $\mathfrak{h}\circ \mathfrak{h}(z)=z$,  
and that for $f\in H^2(\C_+)$,  $g(z):=f\left(\frac{1+z}{1-z}\right)$ belongs to $H^2(\mathbb{D})$.
\begin{theorem}\label{iso_hardys}
The mapping $V:H^2(\mathbb{D})\to H^2(\C_+)$ defined by
\begin{equation}\label{iso de Hardy}
    (Vf)(s):=\frac{1}{\sqrt{\pi}(1+s)}f(\mathfrak{h}(s)),
\end{equation}
is an isometric isomorphism.
\end{theorem}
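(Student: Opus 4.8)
The statement to prove is Theorem \ref{iso_hardys}: the map $V:H^2(\mathbb D)\to H^2(\C_+)$ given by $(Vf)(s)=\frac{1}{\sqrt\pi(1+s)}f(\mathfrak h(s))$ is an isometric isomorphism. The plan is to verify three things in order: (i) $V$ is well-defined, i.e. $Vf$ is analytic on $\C_+$ and lies in $H^2(\C_+)$; (ii) $V$ is isometric, i.e. $\|Vf\|_{H^2(\C_+)}=\|f\|_{H^2(\mathbb D)}$ for all $f\in H^2(\mathbb D)$; and (iii) $V$ is surjective, for which I would simply exhibit the inverse and check it maps $H^2(\C_+)$ into $H^2(\mathbb D)$.

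For analyticity in (i): since $\mathfrak h(s)=\frac{1-s}{1+s}$ is a Möbius transformation carrying $\C_+$ into $\mathbb D$ (it is the inverse of $\mathfrak h$ restricted appropriately, as noted after \eqref{M del D en C}), the composition $f\circ\mathfrak h$ is analytic on $\C_+$, and multiplication by the analytic, nonvanishing factor $\frac{1}{\sqrt\pi(1+s)}$ keeps it analytic. The substantive part of (i) and all of (ii) is the norm identity, which I would prove by a change of variables on the boundary. Parametrize $\partial\C_+=i\R$ by $s=iy$ and $\partial\mathbb D=\TT$ by $z=e^{i\omega}$; the boundary correspondence is $e^{i\omega}=\mathfrak h(iy)=\frac{1-iy}{1+iy}$, which is exactly the Cayley transform. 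Differentiating gives $d\omega = \frac{2}{1+y^2}\,dy$, and one computes $|1+iy|^2 = 1+y^2$, so that $\frac{1}{\pi|1+iy|^2}\,dy = \frac{1}{2\pi}\cdot\frac{2}{1+y^2}\,dy = \frac{1}{2\pi}\,d\omega$. Hence
\begin{equation*}
\|Vf\|_{H^2(\C_+)}^2 = \int_{-\infty}^\infty \frac{|f(\mathfrak h(iy))|^2}{\pi|1+iy|^2}\,dy = \frac{1}{2\pi}\int_0^{2\pi}|f(e^{i\omega})|^2\,d\omega = \|f\|_{H^2(\mathbb D)}^2,
\end{equation*}
using the boundary-value identifications of both Hardy spaces recalled in the excerpt. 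To be careful one should check that the boundary function of $Vf$ is indeed $y\mapsto \frac{1}{\sqrt\pi(1+iy)}\widetilde f(\mathfrak h(iy))$ and that this pointwise a.e. boundary correspondence is legitimate — this is where the standard theory of Hardy spaces (nontangential limits, the fact that the Cayley transform preserves the respective $L^2$ boundary classes) is invoked; I expect this technical point about boundary values to be the main obstacle, though it is classical.

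For surjectivity (iii), since $\mathfrak h$ is self-inverse, the natural candidate for $V^{-1}:H^2(\C_+)\to H^2(\mathbb D)$ is $(V^{-1}h)(z)=\frac{1}{\sqrt\pi}\cdot\frac{?}{?}\,h(\mathfrak h(z))$ with the Jacobian factor adjusted so that the same change-of-variables computation runs in reverse; concretely one checks $\frac{1}{\sqrt\pi(1+s)}$ is replaced by the factor making $V\circ V^{-1}=\mathrm{id}$, namely dividing by $\sqrt\pi$ and the appropriate $(1\pm z)$-term. Once a left-and-right inverse is produced and shown to land in the correct space by the symmetric version of the computation in (ii), isometry plus bijectivity gives that $V$ is an isometric isomorphism. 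Alternatively, and perhaps more cleanly, one argues that an isometry with dense range is surjective: the polynomials are dense in $H^2(\mathbb D)$ and their images under $V$ are reproducing-kernel-type rational functions whose span is dense in $H^2(\C_+)$; combined with the isometry from (ii), surjectivity follows without explicitly inverting $V$. I would present whichever is shorter, likely the explicit inverse.
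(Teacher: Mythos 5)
The paper states Theorem \ref{iso_hardys} without proof, treating it as a classical fact about Hardy spaces (cf.\ \cite{Hoffman Hardy}), so there is no internal argument to compare against; your outline is the standard proof and is essentially correct. Your boundary change of variables is right: with $e^{i\omega}=\mathfrak{h}(iy)$ one has $|1+iy|^2=1+y^2$ and $|d\omega|=\frac{2}{1+y^2}\,dy$, which yields the isometry of boundary functions. Two points to tighten when writing it out. First, the boundary identity alone does not show that $Vf$ lies in $H^2(\C_+)$, since that norm is a supremum over vertical lines; you correctly flag this, and it is exactly where classical Hardy space theory must be invoked (either the characterization of $H^2(\C_+)$ through Cauchy/Poisson representation of $L^2(i\R)$ boundary data, or a density argument: verify membership directly for a dense family such as polynomials or the kernels $k^{\D}_s$ and use the isometric bound). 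Second, your inverse is left with placeholders and the verbal description (``dividing by $\sqrt{\pi}$'') is not quite right: since $1+\mathfrak{h}(z)=\frac{2}{1+z}$, solving $h=Vf$ gives explicitly $(V^{-1}h)(z)=\frac{2\sqrt{\pi}}{1+z}\,h(\mathfrak{h}(z))$, which is consistent with the kernel formula \eqref{k en k 2} quoted in the paper; with this formula the reverse change of variables shows $V^{-1}$ is isometric and surjectivity follows. Your alternative route to surjectivity (isometry plus dense range, using that the images of a dense family, e.g.\ the normalized reproducing kernels, span a dense subspace of $H^2(\C_+)$) is equally valid.
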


The isometric isomorphism $V$ defined in \eqref{iso de Hardy} does not map the reproducing kernel of $H^2(\mathbb{D})$ to the reproducing kernel of $H^2(\C_+)$ but we have
\begin{align}
\text{for } s\in \D, \qquad   V(k_{s}^\mathbb{D})(z) & =  \frac{2\sqrt{\pi}}  {1+\overline{s}}\ k_{\mathfrak{h}(s)}^{\C_{+}}(z)
\notag\\
\text{and for } s\in \mathbb{C}_+, \qquad   V^{-1}(k_{s}^{\mathbb{C}_+})(z) & = \frac{1}  {\sqrt{\pi}(1+\overline{s})}\ k_{\mathfrak{h}(s)}^{\D}(z)\label{k en k 2}.
\end{align}

As an easy consequence of the isomorphism between the Hardy spaces we have the following proposition.
\begin{proposition}
     $\{\lambda_j\}_{j=1}^\infty\subset \C_+$ is an interpolating sequence for $H^2(\C_+)$ if and only if $\{\mathfrak{h}(\lambda_j)\}_{j=1}^\infty$ is an interpolating sequence for $H^2(\mathbb{D})$, where $\mathfrak{h}$ is as in \eqref{M del D en C}. 
\end{proposition}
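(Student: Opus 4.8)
The plan is to leverage the isometric isomorphism $V:H^2(\mathbb{D})\to H^2(\C_+)$ from Theorem~\ref{iso_hardys}, together with the explicit relation \eqref{k en k 2} between the reproducing kernels, to transfer the Riesz sequence / Riesz basis property of normalized reproducing kernels from one space to the other. The key observation is that a unitary map sends Riesz sequences to Riesz sequences, and it sends a Riesz basis to a Riesz basis (since it preserves completeness as well); the only subtlety is that $V$ does \emph{not} send normalized kernels to normalized kernels exactly, but only up to scalar multiples, and scalar multiples of the individual vectors do not affect the Riesz sequence property.

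Concretely, first I would fix an interpolating sequence $\{\lambda_j\}\subset\C_+$ for $H^2(\C_+)$, which by Definition~\ref{def_interpolating} means that the normalized kernels $f_{\lambda_j}=k_{\lambda_j}^{\C_+}/k_{\lambda_j}^{\C_+}(\lambda_j)$ form a Riesz sequence (basis) in $H^2(\C_+)$. Applying $V^{-1}$ and using \eqref{k en k 2}, I get that $V^{-1}(f_{\lambda_j})$ is a nonzero scalar multiple of $k_{\mathfrak{h}(\lambda_j)}^{\D}$, hence a nonzero scalar multiple of the normalized kernel $f_{\mathfrak{h}(\lambda_j)}$ in $H^2(\mathbb{D})$. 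Since $V^{-1}$ is unitary, $\{V^{-1}(f_{\lambda_j})\}$ is a Riesz sequence (basis) in $H^2(\mathbb{D})$, and because the Riesz-sequence inequalities are insensitive to rescaling each vector by a fixed nonzero constant (the constants $c,C$ merely get multiplied by $\inf_j|\alpha_j|^2$ and $\sup_j|\alpha_j|^2$, which are positive and finite here since the scalars $\tfrac{1}{\sqrt\pi(1+\overline{\lambda_j}})$ and $k_{\mathfrak{h}(\lambda_j)}^\D(\mathfrak{h}(\lambda_j))$-normalizations stay bounded above and below on any interpolating sequence), it follows that $\{f_{\mathfrak{h}(\lambda_j)}\}$ is a Riesz sequence (basis) in $H^2(\mathbb{D})$, i.e.\ $\{\mathfrak{h}(\lambda_j)\}$ is interpolating for $H^2(\mathbb{D})$. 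The converse direction is identical, using $V$ in place of $V^{-1}$ and the first line of \eqref{k en k 2}, together with the fact that $\mathfrak{h}$ is a self-inverse bijection of $\D$ and $\C_+$.

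The main obstacle — really the only point requiring care — is verifying that the rescaling constants relating $V^{\pm1}$ of a normalized kernel to the normalized kernel in the target space are uniformly bounded above and below away from zero along the sequence, so that the Riesz bounds genuinely transfer. One should check that this is automatic: the relevant ratio of normalization factors, worked out from the kernel norms $\|k_s^{\C_+}\|^2=\tfrac{1}{4\pi\operatorname{Re}(s)}$ and $\|k_s^{\D}\|^2=\tfrac{1}{1-|s|^2}$ and the conformal factor $\tfrac{1}{\sqrt\pi(1+\overline s)}$, simplifies (using $\operatorname{Re}\mathfrak{h}(z)=\tfrac{1-|z|^2}{|1+z|^2}$) to a universal constant independent of the point. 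Hence no uniformity hypothesis is actually needed and the equivalence holds as stated. I would present the argument in that order: state the unitary/Riesz-sequence transfer principle, apply \eqref{k en k 2}, observe the scalar factors are harmless, and conclude both implications.
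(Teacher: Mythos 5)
Your proof is correct and follows exactly the route the paper intends: the paper states this proposition without a written proof, as ``an easy consequence of the isomorphism between the Hardy spaces,'' i.e.\ transporting normalized reproducing kernels through the unitary $V$ of Theorem~\ref{iso_hardys} using \eqref{k en k 2}, which is precisely what you do. Your closing computation is the decisive point and is right---with the norm normalization $k_\lambda/\|k_\lambda\|$ the transfer factor works out to $\frac{|1+\lambda_j|}{1+\overline{\lambda_j}}$, a unimodular scalar, so the Riesz bounds pass through unchanged (note that the individual factors $\frac{1}{\sqrt{\pi}(1+\overline{\lambda_j})}$ and $(1-|\mathfrak{h}(\lambda_j)|^2)^{-1}$ you mention mid-argument are not separately bounded along the sequence; only their combination is, which your final simplification correctly establishes).
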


\subsection{Previous results about the discrete dynamical sampling problem}

Let us recall here some results of the discrete dynamical sampling problem.  
For the convenience of the reader and to make this paper self-contained,  we repeat (without proofs) the relevant results from the recent article
\cite{suarez}.

 Write $\varphi_0(z):=z$ and for $0\neq\eta_j\in \D$, $\varphi_{\eta_i}(z) :=   \frac{\eta_i-z}{1-\overline{\eta}_i z}$. 
The pseudo-hyperbolic metric in $\D$ is given by $\rho(z,w):= |\varphi_z(w)|$, and
we denote by $\Delta_\rho(z,r) := \{ w\in \D: \ \rho(z,w) <r \}$ for $0<r<1$.
\begin{definition}
A positive measure $\mu$ on $\D$ is called a {\em Carleson measure} if there exists a constant $C>0$ such that
$$\int |f|^2 d\mu \le C^2 \|f\|^2, \qquad \forall \  f\in H^2(\D).
$$
\end{definition}

We have the following Theorem.
\begin{theorem}\cite{suarez}\label{teo_imp_suarez}
Let $A$ be a diagonal operator with respect to the standard basis
in $\ell^2(J)$, where $J$ is finite, or $J= \mathbb{N}$, and let $a^1, \ldots , a^m \in \ell^2(J)$.
Then $\{ A^n a^j: \, n\in \mathbb{N}\cup \{0\}, \, 1\le j\le m\}$ is a frame if and only if
\begin{itemize}
\item There exists $C\geq1$ such that each $a^i$ ($i=1, \ldots , m$) is given by 
\begin{equation}\label{niunii}
    a_j^i =  d_j  \, \overline{\alpha}^i_j (1-|\eta_{j}|^{2})^{\frac{1}{2}} \qquad  (\forall j\in J), \quad \text{ with } \quad \sum_{i=1}^m|\alpha^i_j|^2 =1\, \text{ for all } j \in J.
\end{equation}
\item The sequence of eigenvalues $\{ \eta_j : \, j\in J\}$ of $A$ is in $\D$ and satisfies that the measure $\sum_{i}  (1-|\eta_i|^2) \delta_{\eta_i}$ is a Carleson measure. 
\item The sequence $S:=\{ \eta_j \}$ and the double sequence
$\{ \alpha^i_j : \, j\in J, \,1\le i\le m\}$ of \eqref{niunii} satisfy the following two conditions:
\begin{enumerate}
\item[(1)] there is $\beta>0$ such that $\Delta(\eta_j , \beta)$ contains no more than m points of $S$
counting repetitions for all $j$.
\item[(2)] there is $0<\gamma <\beta$ such that
if\/ $\eta_{j_1} , \ldots, \eta_{j_p}$ ($p\leq m$) are the points of $S$ in
$\Delta(\eta_{j_1}, \gamma)$  counting repetitions, the related matrix satisfies for all $(c_{j_1},\ldots, c_{j_p}) \in \mathbb{C}^p$,
$$
D
\left\|
    \begin{bmatrix}
    c_{j_1} \\
    \vdots  \\
    c_{j_p} \\
  \end{bmatrix}
\right\|^2_{\mathbb{C}^p}
\le
\left\|
  \begin{bmatrix}
    \alpha^1_{j_1}  & ...  &\alpha^1_{j_p} \\
    \vdots     &            &  \vdots  \\
     \alpha^m_{j_1}  & ...  &\alpha^m_{j_p}\\
  \end{bmatrix}
  \begin{bmatrix}
    c_{j_1} \\
    \vdots \\
    c_{j_p} \\
  \end{bmatrix}
\right\|^2_{\C^{m \times 1}}
$$
where $D>0$ does not depend on $p$ or the $\alpha$'s .
\end{enumerate}
\end{itemize}
\end{theorem}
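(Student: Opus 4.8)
The plan is to translate the frame property into a statement about a weighted system of normalized reproducing kernels of $H^2(\D)$ and then read off the three groups of conditions one by one. Since $A$ is diagonal with $Ae_k=\eta_k e_k$ on the standard basis, a direct computation gives $\langle f,A^n a^j\rangle=\sum_k f_k\,\overline{a^j_k}\,\overline{\eta_k}^{\,n}$, which is exactly the $n$-th Taylor coefficient of $z\mapsto\sum_k f_k\overline{a^j_k}\,k^{\D}_{\eta_k}(z)$. Hence $\sum_{n\ge 0}|\langle f,A^n a^j\rangle|^2=\bigl\|\sum_k f_k\overline{a^j_k}\,k^{\D}_{\eta_k}\bigr\|_{H^2(\D)}^{2}$, and $\{A^n a^j:\ n\ge 0,\ 1\le j\le m\}$ is a frame for $\ell^2(J)$ if and only if the operator $W\colon\ell^2(J)\to H^2(\D)^m$ determined by $We_k=(\overline{a^j_k}\,k^{\D}_{\eta_k})_{j=1}^{m}$ is bounded above and bounded below; equivalently, $\{We_k\}_{k\in J}$ is a Riesz sequence in $H^2(\D)^m$.

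Testing $W$ on $e_k$ gives $\|We_k\|^2=\bigl(\sum_j|a^j_k|^2\bigr)\,\|k^{\D}_{\eta_k}\|^2$, which is finite only if $|\eta_k|<1$; since for a frame $\bigcup_j\operatorname{supp}a^j=J$ (because $A$ is diagonal), every coordinate is tested, so $\eta_k\in\D$ and $\sum_j|a^j_k|^2\asymp 1-|\eta_k|^2$. This forces the representation \eqref{niunii} with $\sum_i|\alpha^i_k|^2=1$ and $|d_k|$ bounded above and below, and absorbing the $d_k$ into an isomorphism of $\ell^2(J)$ we may assume $We_k=\hat k^{\D}_{\eta_k}\otimes\alpha_k$, where $\alpha_k=(\alpha^1_k,\dots,\alpha^m_k)$ is a unit vector of $\C^m$ and $\hat k$ is the normalized kernel. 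Elementary estimates using $\sum_i|\alpha^i_k|^2=1$ and the Cauchy--Schwarz inequality in the index $1\le j\le m$ then show that the upper bound for $W$ is equivalent to boundedness of the synthesis map $(c_k)\mapsto\sum_k c_k\hat k^{\D}_{\eta_k}$, which by the standard duality with the Carleson embedding $f\mapsto\bigl((1-|\eta_k|^2)^{1/2}f(\eta_k)\bigr)_k$ is precisely the condition that $\sum_i(1-|\eta_i|^2)\delta_{\eta_i}$ be a Carleson measure.

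The core of the argument is the lower bound: under the Carleson condition, $\{\hat k^{\D}_{\eta_k}\otimes\alpha_k\}_k$ is a Riesz sequence if and only if (1) and (2) hold. Here I would decompose $J$ by the pseudo-hyperbolic metric into clusters of mutually close points; at a suitable scale, condition (1) bounds each cluster by $m$ elements, and the identity $|\langle\hat k^{\D}_{\eta_j},\hat k^{\D}_{\eta_l}\rangle|=(1-\rho(\eta_j,\eta_l)^2)^{1/2}$, combined with the Carleson bound, makes the Gram operator of $\{\hat k^{\D}_{\eta_k}\}$ a norm-small perturbation of its block-diagonal part along the clusters. On one cluster the kernels are nearly collinear, so the corresponding Gram block of $\{\hat k^{\D}_{\eta_k}\otimes\alpha_k\}$ differs by a $\gamma$-controlled error from a matrix unitarily equivalent to $M^*M$, where $M$ is the $\alpha$-matrix of that cluster; its uniform invertibility is exactly condition (2). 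A Neumann-series/Schur estimate then assembles the global lower bound from the block lower bounds plus the small off-diagonal part. For the converse: more than $m$ points in a cluster would give more than $m$ almost-collinear kernels paired with $\C^m$-vectors, hence an almost-vanishing nontrivial combination; and a failure of a uniform lower bound in (2) on some cluster lets one transport, via a Carleson interpolation, an $H^2$ function concentrated near that cluster into a unit sequence with $\|Wf\|$ arbitrarily small --- giving the necessity of (1) and (2).

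I expect the main obstacle to be precisely this quantitative clustering: choosing the cluster scale and the thresholds $\beta,\gamma$ (depending only on $m$ and the Carleson constant) so that the off-diagonal Gram mass is dominated while each cluster block stays within controlled distance of $M^*M$, and then making the converse test-function construction precise. The reduction in the first two paragraphs and the Carleson equivalence are routine manipulations with the reproducing-kernel formula.
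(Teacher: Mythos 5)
You should first note that the paper does not prove this statement at all: it is imported verbatim from \cite{suarez} ``without proofs'', so your attempt can only be judged on its own. Your reduction is sound and in fact coincides with the identity the paper itself uses in Lemma~\ref{teo_bon_lemma_disc}: $\sum_{n\ge 0}|\langle f,A^na^j\rangle|^2=\|\sum_k f_k\overline{a^j_k}k^{\D}_{\eta_k}\|^2_{H^2(\D)}$, so the frame property is equivalent to two-sided bounds for the map $W$ with $We_k=\hat k^{\D}_{\eta_k}\otimes\alpha_k$ after normalization. Testing on basis vectors does give $\eta_k\in\D$ and \eqref{niunii}, and the equivalence of the upper bound with the Carleson condition is true, although the direction ``$W$ bounded $\Rightarrow$ Carleson'' is not just Cauchy--Schwarz: you need to split $J$ into $m$ pieces on which $|\alpha^j_k|\ge m^{-1/2}$ and conjugate the Gram matrix by the diagonal of these weights. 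The necessity of (1) and (2), via almost-degenerate finite combinations supported in a small pseudohyperbolic ball (linear dependence of more than $m$ vectors in $\C^m$, respectively failure of the uniform lower bound for the $\alpha$-matrix, together with $\|\hat k^{\D}_{z}-\hat k^{\D}_{w}\|$ small when $\rho(z,w)$ is small), is also a workable sketch.

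The genuine gap is the sufficiency direction, which you yourself flag as the hard step but propose to handle by a Neumann-series perturbation of a block-diagonal Gram matrix. That mechanism cannot work: the hypotheses (Carleson plus condition (1)) give no lower bound close to $1$ on the pseudohyperbolic distance \emph{between} clusters --- distinct clusters may sit at distance comparable to the fixed constants $\gamma,\beta$ --- so the inter-cluster Gram entries $|\langle\hat k^{\D}_{\eta_j},\hat k^{\D}_{\eta_l}\rangle|=(1-\rho(\eta_j,\eta_l)^2)^{1/2}$ are bounded \emph{below} by a fixed positive constant. The off-diagonal part of the Gram operator is bounded (by Carleson) but is not norm-small, and it cannot be dominated by the block lower bound $D$ from (2). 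The case $m=1$ is the decisive sanity check: there your conditions reduce to ``separated $+$ Carleson'' and the claim becomes the Shapiro--Shields/Carleson interpolation theorem; the Gram matrix of an interpolating sequence is invertible but is in no sense a small perturbation of the identity (two points at pseudohyperbolic distance $1/2$ already give an off-diagonal entry $\approx 0.87$), so no diagonal-dominance or Neumann-series argument can prove even this special case. The actual proof requires genuinely different tools (duality/constructive interpolation, and in the multi-orbit situation the model-space arguments of \cite{suarez}), so as written your argument does not establish the theorem and cannot replace the citation.
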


Note that, in the particular case, when $m=1$ choosing $a^1_j=(1-|\eta_{j}|^{2})^{\frac{1}{2}}$, the orbit $\{ A^n a^1\}_{ n\in\N\cup\{0\}}$ is a frame if and only if
$\left\{\frac{k^\D_{\eta_j}}{\|k^\D_{\eta_j}\|} \right\}$ is a Riesz sequence which in turn is equivalent to $\{ \eta_j\}$ being an interpolating sequence. Therefore the result in \cite[Theorem 3.14]{Tang} is included.

\subsection{Exponential map and Spectral theorem}\label{prelim_exp}\

We list now from \cite{E, Pazy} some results (without proofs) on semigroup theory for the case of bounded operators that we will need later.

An operator $A\in \mathcal{B}(\mathcal{H})$ defines an one-parameter group
\begin{equation}\label{semi}
  \left \{ e^{tA}:=\sum_{n=0}^\infty \frac{t^n}{n!}A^n, \qquad t\in\R \right\}.
\end{equation}

We have the following results.
\begin{proposition}\  
\begin{enumerate}
    \item Given $f\in\mathcal{H}$, $e^{tA}f$ is the solution of the differential equation $\dot{x}(t)=Ax(t)$, $x(0)=f$.
    \item $e^{0A} = I$ (the identity operator), $e^{(t+s)A} = e^{tA}e^{sA}$ for all $t,s\in\R$.
    \item $\lim_{t\to s}\|e^{tA}-e^{sA}\|=0$. That is, $e^{tA}$ is a continuous in the operator norm.
    \item For each $t\in\R$, $e^{tA}\in\mathcal{B}(\mathcal{H})$ is an invertible operator.
    \item There exists constants $M\geq 1$ and $\omega$ and  such that
    \begin{equation*}
    \|e^{tA}\|\leq M e^{\omega t} \qquad \forall t\geq 0.
    \end{equation*}
    In particular, one can choose $M=1$ and $\omega=\|A\|$. 
\end{enumerate}
\end{proposition}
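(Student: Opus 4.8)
The plan is to work directly with the norm-convergent power series \eqref{semi}, using that $\mathcal{B}(\mathcal{H})$ is a Banach algebra and that all the operators appearing are polynomials in $A$, hence mutually commuting. The first step is the basic convergence estimate: since $\sum_{n\ge0}\frac{|t|^n}{n!}\|A\|^n=e^{|t|\,\|A\|}<\infty$, the series defining $e^{tA}$ converges absolutely in operator norm, and by the triangle inequality $\|e^{tA}\|\le\sum_{n\ge0}\frac{t^n}{n!}\|A\|^n=e^{t\|A\|}$ for $t\ge0$. This already establishes item (5) with $M=1$ and $\omega=\|A\|$ (and the version with a general constant $\omega$ follows a fortiori).

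Next, for item (2): $e^{0A}=I$ is immediate since only the $n=0$ term survives. For the group law I would invoke absolute convergence of both factors to form their Cauchy product (Mertens' theorem) and then use that powers of $A$ commute together with the binomial theorem to compute
\[
e^{tA}e^{sA}=\sum_{k\ge0}\Bigl(\sum_{n+m=k}\frac{t^n s^m}{n!\,m!}\Bigr)A^k=\sum_{k\ge0}\frac{(t+s)^k}{k!}A^k=e^{(t+s)A}.
\]
Item (4) is then an immediate corollary: taking $s=-t$ gives $e^{tA}e^{-tA}=e^{-tA}e^{tA}=I$, so $e^{tA}\in\mathcal{B}(\mathcal{H})$ is invertible with inverse $e^{-tA}$.

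For item (3) I would factor $e^{tA}-e^{sA}=e^{sA}\bigl(e^{(t-s)A}-I\bigr)$ via (2) and bound
\[
\|e^{(t-s)A}-I\|=\Bigl\|\sum_{n\ge1}\frac{(t-s)^n}{n!}A^n\Bigr\|\le e^{|t-s|\,\|A\|}-1\xrightarrow[t\to s]{}0,
\]
so that $\|e^{tA}-e^{sA}\|\le\|e^{sA}\|\bigl(e^{|t-s|\,\|A\|}-1\bigr)\to0$. Finally, for item (1): the initial condition $e^{0A}f=f$ is (2), and for the differential equation I would show the difference quotient converges in operator norm. Using the group law, $\frac{e^{(t+h)A}-e^{tA}}{h}=e^{tA}\cdot\frac{e^{hA}-I}{h}$, and
\[
\Bigl\|\tfrac{e^{hA}-I}{h}-A\Bigr\|=\Bigl\|\sum_{n\ge2}\tfrac{h^{n-1}}{n!}A^n\Bigr\|\le\tfrac{1}{|h|}\bigl(e^{|h|\,\|A\|}-1-|h|\,\|A\|\bigr)=O(|h|)\to0,
\]
giving $\tfrac{d}{dt}e^{tA}=e^{tA}A=Ae^{tA}$, hence $\tfrac{d}{dt}e^{tA}f=Ae^{tA}f$; for uniqueness, if $x(\cdot)$ is any differentiable solution, set $y(t):=e^{-tA}x(t)$, so that $\dot y(t)=-Ae^{-tA}x(t)+e^{-tA}Ax(t)=0$ (using that $e^{-tA}$ commutes with $A$), whence $y$ is constant and $x(t)=e^{tA}y(0)=e^{tA}f$. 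None of these steps is a genuine obstacle; the only points requiring a little care are the justification of the Cauchy-product rearrangement in (2) (which is exactly what the Banach-algebra absolute convergence buys) and the $o(|h|)$ estimate on $e^{hA}-I-hA$ underlying differentiability in (1) — everything else is a formal consequence of these.
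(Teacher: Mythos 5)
Your proof is correct. Note that the paper itself offers no proof of this proposition: it is listed in the preliminaries as a collection of standard facts quoted ``without proofs'' from the references \cite{E, Pazy}, so there is nothing to deviate from --- and your power-series argument (absolute convergence in the Banach algebra $\mathcal{B}(\mathcal{H})$, Cauchy product plus the binomial theorem for the group law, the factorization $e^{tA}-e^{sA}=e^{sA}(e^{(t-s)A}-I)$ for norm continuity, the $O(|h|)$ bound on $e^{hA}-I-hA$ for differentiability, and $y(t)=e^{-tA}x(t)$ for uniqueness) is exactly the standard textbook treatment for uniformly continuous semigroups generated by bounded operators. The only step worth a remark is the uniqueness argument, where differentiating $t\mapsto e^{-tA}x(t)$ uses a product rule for an operator-valued times a vector-valued function; this is justified by the same difference-quotient estimate you already established, so the proof is complete as written.
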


\begin{definition}\label{def_exp_estable} Let $A \in \calB(\HH)$ be such that $\|e^{tA}\|\leq M e^{\omega t}\ \forall t\geq 0$. If $\omega<0$ we say that the semigroup $\{e^{tA}\}_{t\in[0,\infty)}$ is \textit{exponentially stable}. Since $A$ is a bounded operator, this is equivalent to the condition $\mathop{\mathrm{Re}}(\lambda) < 0$ for all $\lambda \in \sigma(A)$, where $\sigma(A)$ denotes the spectrum of $A$.    
\end{definition}

For some of the results it will be convenient to use a spectral definition of the exponential map $e^{tA}$. Therefore we recall the spectral theorem with multiplicity for normal operators \cite{conway} which will yield an alternative (but equivalent) definition of the exponential map.

For a  non-negative regular Borel measure $\mu$ on $\mathbb{C}$, $N_{\mu}$  will denote the multiplication operator acting on  $ L^2(\mu)$, i.e., for a $\mu$-measurable function $f:\CC\to \CC$ such that $\int_\CC |f(z)|^2d\mu(z)< \infty$, 
$$N_{\mu}f(z)=zf(z).$$

We will use the notation $[\mu]=[\nu]$ to denote two mutually absolutely continuous measures $\mu$ and $\nu$.

The operator $N_{\mu}^{(k)}$ will denote the direct sum of $k$ copies of $N_{\mu}$, i.e.,
\begin{equation*}
(N_\mu)^{(k)}=\oplus_{i=1}^{k}N_{\mu}.
\end{equation*}
Similarly, the space $(L^2(\mu))^{(k)}$ will denote the direct sum of $k$ copies of $L^2(\mu)$. 

\begin{theorem}[\textbf{Spectral theorem with multiplicity}]\label{spectral theorem} For any normal operator $A$ on $\HH$ there are mutually singular  non-negative  Borel measures $\mu_j, 1\leq j\leq\infty$, such that $A$ is equivalent to the operator
	$$N_{\mu_{\infty}}^{(\infty)}\oplus N_{\mu_1}
	\oplus N_{\mu_2}^{(2)}\oplus\ldots,$$
	i.e., there exists a unitary transformation
	\begin{equation*}\label{calW}
	U:\HH\rightarrow \mathcal{W}:=(L^2(\mu_{\infty}))^{(\infty)}\oplus L^2(\mu_1)\oplus (L^2(\mu_2))^{(2)}\oplus\ldots    
	\end{equation*}
	such that
	\begin{equation*}\label{representation of normal}
	UAU^{-1}=N_{\mu_{\infty}}^{(\infty)}\oplus N_{\mu_1}\oplus N_{\mu_2}^{(2)}\oplus\ldots.
	\end{equation*}
	Moreover, if $\tilde{A}$ is another normal operator with corresponding measures $\nu_{\infty},\nu_1,\nu_2,\ldots$, then $\tilde{A}$ is unitary equivalent to A if and only if $[\nu_j]=[\mu_j]$ for $j=1,\ldots,\infty.$
\end{theorem}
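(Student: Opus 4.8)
The plan is to derive Theorem~\ref{spectral theorem} from the elementary spectral theorem by normalising the underlying scalar measures, and to obtain the uniqueness statement through the multiplicity function; the latter is where the real work lies. Throughout, $\HH$ is separable.

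First I would recall the elementary form of the spectral theorem for a bounded normal operator $A$: via Gelfand theory on the commutative $C^*$-algebra generated by $A,A^*,I$ together with the Riesz representation theorem one obtains a regular projection-valued measure $E$ on the Borel sets of $\sigma(A)\subset\C$ with $A=\int z\,dE(z)$ and a Borel functional calculus $\phi\mapsto\phi(A)$. A Zorn's lemma argument decomposes $\HH$ into an orthogonal sum of $E$-cyclic subspaces, and separability makes this sum countable: $\HH=\bigoplus_n\HH_n$ with cyclic vectors $e_n$. Writing $\rho_n(\omega):=\langle E(\omega)e_n,e_n\rangle$, the assignment $\phi(A)e_n\mapsto\phi$ extends to a unitary $\HH_n\to L^2(\rho_n)$ intertwining $A|_{\HH_n}$ with $N_{\rho_n}$, so $A$ is unitarily equivalent to $\bigoplus_n N_{\rho_n}$ for countably many finite Borel measures $\rho_n$ supported in $\sigma(A)$.

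For existence of the canonical form, set $\lambda:=\sum_n 2^{-n}\rho_n/\|\rho_n\|$ (discarding the zero terms), so $\rho_n\ll\lambda$; with $f_n:=d\rho_n/d\lambda$ and $\Delta_n:=\{f_n>0\}$, multiplication by $\sqrt{f_n}$ shows $N_{\rho_n}\cong N_{\lambda|_{\Delta_n}}$. Now partition $\mathrm{supp}\,\lambda$ (up to $\lambda$-null sets) by multiplicity: $E_k:=\{z:\#\{n:z\in\Delta_n\}=k\}$ for $1\le k\le\infty$, and put $\mu_k:=\lambda|_{E_k}$, which are mutually singular. On each $E_k$ with $k$ finite, enumerate the active indices $n_1(z)<\dots<n_k(z)$ (Borel functions of $z$) and map $(g_n)_n\mapsto(h_1,\dots,h_k)$ with $h_j(z):=g_{n_j(z)}(z)$; this is a unitary $\bigoplus_n L^2(\lambda|_{\Delta_n\cap E_k})\to(L^2(\mu_k))^{(k)}$ intertwining the multiplication operators, measurability being checked by splitting $E_k$ into the countably many Borel pieces where $(n_1,\dots,n_k)$ is constant. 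The same sorting on $E_\infty$ yields $(L^2(\mu_\infty))^{(\infty)}$. Since $\Delta_n=\bigsqcup_k(\Delta_n\cap E_k)$ up to null sets, reassembling gives $A\cong N_{\mu_\infty}^{(\infty)}\oplus N_{\mu_1}\oplus N_{\mu_2}^{(2)}\oplus\cdots$, allowing some $\mu_k=0$.

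For uniqueness, let $A$ carry two such representations, with mutually singular families $\{\mu_k\}_{k\le\infty}$ and $\{\nu_k\}_{k\le\infty}$, and exhibit two unitary invariants. The first is the scalar spectral measure class: a Borel set $\omega$ is $\lambda$-null for $\lambda=\sum_k\mu_k$ precisely when the spectral projection $E(\omega)$ vanishes, hence $[\sum_k\mu_k]=[\sum_k\nu_k]=:[\lambda]$ is intrinsic to $A$. The second is the multiplicity function $m_A:\C\to\{0,1,\dots,\infty\}$, defined $\lambda$-a.e.; in the canonical model one checks $m_A=k$ $\lambda$-a.e.\ on $E_k$, so $[\mu_k]=[\lambda|_{m_A^{-1}(k)}]$ depends only on $A$, and likewise for $\nu_k$, giving $[\mu_k]=[\nu_k]$ for every $k$. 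Making precise that $m_A$ is well defined and a complete invariant reduces to the lemma $N_\mu^{(p)}\cong N_\mu^{(q)}\Rightarrow p=q$ (together with the reduction of a general equivalence to this case over the disjoint supports of mutually singular pieces): a unitary implementing $N_\mu^{(p)}\cong N_\mu^{(q)}$ commutes with the multiplications, hence, the commutant of $N_\mu$ being $L^\infty(\mu)$, it is a matrix of $L^\infty(\mu)$ functions whose values are $\mu$-a.e.\ simultaneously isometries and co-isometries $\C^p\to\C^q$, forcing $p=q$. I expect these measurable pointwise-rank arguments, along with the Borel index-sorting in the existence step, to be the main obstacles; the rest is the standard bookkeeping of spectral measures and direct sums.
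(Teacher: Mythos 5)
The paper does not actually prove this statement: it is quoted as the classical spectral theorem with multiplicity, with the proof delegated to Conway \cite{conway}, and your sketch follows essentially that same standard route (Zorn/cyclic decomposition into $\bigoplus_n N_{\rho_n}$, normalisation to a single scalar measure $\lambda$, partition of $\operatorname{supp}\lambda$ by the multiplicity function, and uniqueness via the reduction to $N_\mu^{(p)}\cong N_\mu^{(q)}\Rightarrow p=q$ using that the commutant of $N_\mu$ is $L^\infty(\mu)$). Your plan is sound as written; the only items to record when fleshing it out are the easy ``if'' direction of the uniqueness claim (multiplication by $\sqrt{d\nu_j/d\mu_j}$ on each summand) and the countable null-set bookkeeping needed for the pointwise isometry/co-isometry argument when one of $p,q$ is infinite.
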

A proof of the theorem can be found in \cite{conway}.

Let $U$ be as in Theorem \ref{spectral theorem}. 
 If $g\in \HH$,  one has   $Ug = ((Ug)_j)_{j\in \N^*}$, where $\N^*:=\N\cup\{\infty\}$ $(Ug)_j$ is the restriction of $Ug$ to $(L^2(\mu_j))^{(j)}$. Thus, for any $j\in \N^*$, $ (Ug)_j $ is a function from $ \CC $ to $ \ell^2(\Omega_j) $ and
$$\sum_{j\in \N^*} \quad \int_\CC \|(Ug)_j(z)\|_{\ell^2(\Om_j)}^2d\mu_j(z) =\|g\|^2<\infty .$$

\begin{definition}\label{normal-eat}
	Given a normal operator $A$, $e^{tA}:\HH\rightarrow\HH$ can be defined  by
	\begin{equation*}
	\langle e^{tA} f, g\rangle=\int_{z\in\sigma(A)}e^{tz}\langle Uf(z),Ug(z)\rangle d\mu(z), \quad \text{ for all }f,g\in\HH.    
	\end{equation*}
\end{definition}
 Note that, for  normal operators $A$ this definition coincides with $\sum_{n=0}^\infty \frac{t^n}{n!}A^n$.

\section{Finite vs infinite time and discretization}\label{discretization}

We start with two theorems concerning the discretization in finite ($\mathcal{T}=[0,L]$) and infinite time ($\mathcal{T}=[0,\infty)$). Notice that the first one is for general linear bounded operators $A$, whereas the second is only when $\{e^{tA}\}_{t\in [0,\infty)}$ is exponentially stable.  The proofs are almost verbatim  to \cite[Theorem 5.4]{FIACPO}. However, since we use $e^{tA}$  instead of $A^t$ (as they do in the aforementioned paper), some parts get simplified using some basic results from Section \ref{prelim_exp}. 
We give the proof of Theorem \ref{ScToDscrInfiniteTime} to show where the choice of $e^{tA}$ simplifies the arguments. Following this steps, the proof of Theorem \ref{ScToDscr} can be deduced so we omit it.
Also, the Corollary \ref{frame tiempo finito implica G inf} is the analogous result to the one given in  \cite[Corollary 5.3]{FIACPO}.

\begin{theorem}\label{ScToDscr}	Let $\mathcal{H}$ be a Hilbert space,
	$\ObsDynOp\in\mathcal{B}(\ObsDynSp)$ and let $\G$ be a Bessel system of vectors in $\ObsDynSp$. Then, the following are equivalent.
	\begin{enumerate}
	    \item[i)] $\{e^{tA}g \}_{g\in\G,t\in[0,L]}$ is a semi-continuous frame for $\mathcal{H}$.
	    \item[ii)] There exists $\delta>0$ such that \textbf{for any} finite set $T=\{t_j: j=1,\ldots,n\}$ with $0=t_1< t_2<\ldots<t_n\leq t_{n+1}=L$ and $|t_{j+1}-t_{j}|<\delta$ for all $j\in\{1,...,n\}$,  the system $\{e^{tA}g\}_{g\in \G,t\in T}$ is a frame for $\ObsDynSp$.
	    \item[iii)] There \textbf{exists a} finite partition $T=\{t_j: j=1,\ldots,n\}$ of $[0,L]$ with $0= t_1< t_2<\ldots<t_n\leq L$  such that $\{e^{tA}g \}_{g\in\G,t\in T}$ is a frame for $\ObsDynSp$.
	\end{enumerate}
\end{theorem}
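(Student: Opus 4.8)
The plan is to prove the cycle of implications $iii)\Rightarrow i)\Rightarrow ii)\Rightarrow iii)$, which is the natural order here since $iii)\Rightarrow i)$ is a simple continuity/domination argument, $i)\Rightarrow ii)$ is the genuine content, and $ii)\Rightarrow iii)$ is trivial (specialize to one admissible partition, which exists by choosing $n$ large). Throughout I would use that $\G$ is Bessel, say with bound $C_\G$, and that $t\mapsto e^{tA}$ is continuous in operator norm on the compact interval $[0,L]$, hence uniformly continuous there; set $K:=\sup_{t\in[0,L]}\|e^{tA}\|<\infty$.

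For $iii)\Rightarrow i)$: given a partition $T$ making $\{e^{tA}g\}_{g\in\G,t\in T}$ a frame with lower bound $c_T>0$, I would show the integral expression in \eqref{frame_cond} dominates the discrete sum. The cleanest way is to compare, for each subinterval $[t_j,t_{j+1}]$ and each $f$, the value $|\langle f,e^{t_jA^*}g\rangle|^2$ with the average $\frac{1}{t_{j+1}-t_j}\int_{t_j}^{t_{j+1}}|\langle f,e^{tA^*}g\rangle|^2\,dt$; since $t\mapsto\langle f,e^{tA^*}g\rangle$ is Lipschitz on $[0,L]$ with a constant controlled by $\|f\|\,\|g\|\,K\,\|A\|$, the difference of these two quantities, summed over $g\in\G$ (using Bessel) and over $j$, is bounded by a constant times $L\cdot(\text{something})\cdot\|f\|^2$ — but this alone doesn't give a clean lower bound unless the mesh is small, which we don't control in $iii)$. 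So instead I would argue more robustly: the map $f\mapsto(\langle f,e^{tA^*}g\rangle)_{g,t\in T}$ factors as restriction of $f\mapsto(\langle f,e^{tA^*}g\rangle)_{g,t\in[0,L]}$, so the continuous analysis operator $\calS$ has closed range and trivial kernel (because already its restriction to $T$ is bounded below), and boundedness of $\calS$ from above is immediate from Bessel and $K$: $\sum_{g}\int_0^L|\langle f,e^{tA^*}g\rangle|^2dt\le L K^2 C_\G\|f\|^2$. Boundedness below: $\|\calS f\|^2\ge$ (restriction to $T$ is not a sub-object of the $L^2$-norm, so I must be careful) — the correct route is that $i)$ is equivalent to $\calS$ bounded below, and $\calS$ bounded below follows once we know $\ker\calS=\{0\}$ together with $\calS$ having closed range; closed range will come out of the $i)\Leftrightarrow ii)$ machinery, so in practice I would prove $iii)\Rightarrow ii)$ directly via the mesh-refinement lemma below and then note $ii)\Rightarrow i)$ is the same averaging estimate run in the easy direction.

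Let me restructure: the real engine is the quantitative lemma that, if $\{e^{tA}g\}_{g\in\G,t\in[0,L]}$ has lower frame bound $c>0$, then for any partition with mesh $\delta$ small enough (depending only on $c$, $C_\G$, $K$, $\|A\|$, $L$) the discrete system $\{e^{tA}g\}_{g\in\G,t\in T}$ still has a positive lower bound, roughly $c/(2L)$. This is exactly the averaging/Riemann-sum comparison: $\int_0^L|\langle f,e^{tA^*}g\rangle|^2dt=\sum_j\int_{t_j}^{t_{j+1}}|\langle f,e^{tA^*}g\rangle|^2dt$, and on each subinterval $|\langle f,e^{tA^*}g\rangle|^2$ differs from $|\langle f,e^{t_jA^*}g\rangle|^2$ by at most $(t_{j+1}-t_j)\cdot 2\|f\|^2\|g\|^2 K^2\|A\| e^{\|A\|L}$ (mean value theorem applied to $t\mapsto e^{tA}$, whose derivative is $Ae^{tA}$); summing over $g$ via Bessel and over $j$ gives that the integral is within $\delta\cdot L\cdot(\text{const})\cdot\|f\|^2$ of $\sum_j(t_{j+1}-t_j)|\langle f,e^{t_jA^*}g\rangle|^2\le L\sum_{t\in T}\sum_g|\langle f,e^{tA^*}g\rangle|^2$. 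Choosing $\delta$ so that the error is $\le c/2$ yields $\sum_{t\in T}\sum_g|\langle f,e^{tA^*}g\rangle|^2\ge (c/2)/L\,\|f\|^2$, i.e. a frame (the upper bound is automatic from Bessel). This proves $i)\Rightarrow ii)$. Then $ii)\Rightarrow iii)$: pick $n$ with $L/n<\delta$ and the uniform partition works. And $iii)\Rightarrow i)$: run the comparison the other way — the discrete sum over any $T$ is dominated, subinterval by subinterval after inserting the factor $(t_{j+1}-t_j)$ and using the same Lipschitz estimate, by the integral plus an error, giving $c\|f\|^2\le \text{(const)}\cdot(\int_0^L\cdots\,dt)$; combined with the easy Bessel upper bound for the integral this gives the semi-continuous frame inequality \eqref{frame_cond}.

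The main obstacle is getting the constants in the Riemann-sum comparison to line up so that the error term can genuinely be absorbed into half the lower bound uniformly in $f$ — this needs the Lipschitz bound on $t\mapsto\langle f,e^{tA^*}g\rangle$ to be summable over $g\in\G$, which is where the Bessel hypothesis on $\G$ is essential (without it the per-$g$ errors need not sum). One should also be slightly careful that in $iii)\Rightarrow i)$ the partition $T$ is \emph{fixed} (no mesh control), so there the comparison must be done with the weights $(t_{j+1}-t_j)$ built in from the start rather than as a perturbation; the inequality $\sum_j(t_{j+1}-t_j)a_j\le L\max_j a_j$ is too lossy, so instead I would bound $\sum_j(t_{j+1}-t_j)|\langle f,e^{t_jA^*}g\rangle|^2$ below by $\int_0^L|\langle f,e^{tA^*}g\rangle|^2dt$ minus a Lipschitz error controlled by $L\cdot(\max_j|t_{j+1}-t_j|)\cdot(\text{const})\|f\|^2$, and here the error uses a \emph{fixed} mesh which may be large, so this direction instead needs: $\int_0^L \le \sum_j (t_{j+1}-t_j)\sup_{[t_j,t_{j+1}]}|\cdots|^2$ and each sup is $\le 2|\langle f,e^{t_jA^*}g\rangle|^2 + 2(\text{Lipschitz error})^2$ — the square of a small quantity, still manageable but requiring the elementary inequality $|a|^2\le 2|b|^2+2|a-b|^2$. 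I expect roughly a page of careful but routine estimates, all hinging on operator-norm continuity of the one-parameter group (Section \ref{prelim_exp}) and the Bessel property of $\G$.
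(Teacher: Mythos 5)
Your cycle $i)\Rightarrow ii)\Rightarrow iii)$ is fine (and $i)\Rightarrow ii)$ is essentially the paper's own Riemann-sum argument), but the implication $iii)\Rightarrow i)$ — which is the only place where the fixed, possibly coarse partition must be upgraded to the continuous lower bound — has a genuine gap. You correctly observe that a Lipschitz/perturbation comparison fails there because the error scales with the mesh, which you do not control; but your proposed repair, namely $\int_0^L\le\sum_j(t_{j+1}-t_j)\sup_{[t_j,t_{j+1}]}|\langle f,e^{tA^*}g\rangle|^2$ together with $|a|^2\le 2|b|^2+2|a-b|^2$, produces an \emph{upper} bound for the integral in terms of the discrete sums (plus errors that are again of the size of the mesh), whereas what is needed is a \emph{lower} bound for the integral; no combination of "integral $\le$ discrete data" with "discrete sum $\ge c\|f\|^2$" can yield the frame inequality $i)$. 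The argument the paper uses (see the step $iii)\Rightarrow i)$ in the proof of Theorem \ref{ScToDscrInfiniteTime}, which is the template for the omitted proof of Theorem \ref{ScToDscr}) avoids any mesh control by exploiting the group law: writing $\int_{t_j}^{t_{j+1}}|\langle f,e^{tA}g\rangle|^2dt=\int_0^{t_{j+1}-t_j}|\langle e^{sA^*}f,e^{t_jA}g\rangle|^2ds$, restricting each inner integral to $s\in[0,m]$ with $m:=\min_j(t_{j+1}-t_j)>0$ (setting $t_{n+1}:=L$), applying the discrete frame inequality to the vector $e^{sA^*}f$, and using the invertibility of the exponential, $\|e^{sA^*}f\|\ge e^{-s\|A\|}\|f\|$, to get $\int_0^L\sum_{g\in\G}|\langle f,e^{tA}g\rangle|^2dt\ge c\,\frac{1-e^{-2m\|A\|}}{2\|A\|}\|f\|^2$. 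This time-shift idea is absent from your proposal and is exactly the missing ingredient; your first attempt via "closed range" is, as you yourself note, circular.

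A secondary, fixable slip occurs in your $i)\Rightarrow ii)$ estimate: you bound the per-$g$ error on $[t_j,t_{j+1}]$ by a constant times $\|f\|^2\|g\|^2$ and then "sum over $g$ via Bessel", but the Bessel hypothesis does not control $\sum_{g\in\G}\|g\|^2$ (take $\G$ an orthonormal basis). The correct bookkeeping, as in the paper's estimate of $\Delta$ in Theorem \ref{ScToDscrInfiniteTime}, is to apply Cauchy--Schwarz in the sum over $g$ and then use the Bessel bound on the vectors $e^{tA^*}f$ and $e^{tA^*}f-e^{t_jA^*}f$, invoking the uniform norm-continuity of $t\mapsto e^{tA}$ on $[0,L]$ to make $\|e^{(t-t_j)A}-I\|<\varepsilon$; with that change your choice of $\delta$ absorbing the error into half the lower frame bound goes through, and $ii)\Rightarrow iii)$ is indeed immediate.
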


\begin{corollary}\label{frame tiempo finito implica G inf}
    If $\{e^{tA}g\}_{g\in\G, t\in [0,L]}$ is a frame for an infinite dimensional  Hilbert space $\HH$ , then $|\G|=\infty$.
\end{corollary}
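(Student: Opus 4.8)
The plan is to deduce Corollary \ref{frame tiempo finito implica G inf} directly from the equivalence in Theorem \ref{ScToDscr}, arguing by contraposition. Suppose $\G$ is a \emph{finite} set of vectors in $\HH$; we want to show that $\{e^{tA}g\}_{g\in\G,\,t\in[0,L]}$ cannot be a frame for $\HH$ when $\dim\HH=\infty$. First I would note that a finite set is automatically a Bessel system, so Theorem \ref{ScToDscr} applies. If the semi-continuous system were a frame, then by the implication (i)$\Rightarrow$(iii) there would exist a \emph{finite} partition $T=\{t_1,\dots,t_n\}$ of $[0,L]$ such that the \emph{finite} set $\{e^{tA}g\}_{g\in\G,\,t\in T}$ is a frame for $\HH$.

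The contradiction then comes from a standard cardinality/dimension count: a frame for a Hilbert space $\HH$ must be a complete system, and in particular its linear span must be dense in $\HH$. But $\{e^{tA}g : g\in\G,\, t\in T\}$ has at most $|\G|\cdot|T| = |\G|\cdot n < \infty$ elements, so its span is a finite-dimensional subspace of $\HH$, hence closed and proper (since $\dim\HH=\infty$). A proper closed subspace cannot be dense, so this finite set cannot be a frame for $\HH$. This contradiction forces $|\G|=\infty$.

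I do not anticipate a serious obstacle here: the only mild point to be careful about is making sure the hypotheses of Theorem \ref{ScToDscr} are genuinely met — namely that $\G$ being finite makes it a Bessel system (clear, since any finite family $\{g_1,\dots,g_m\}$ satisfies $\sum_{k}|\langle f,g_k\rangle|^2 \le (\sum_k\|g_k\|^2)\|f\|^2$) — and that the notion of ``frame'' for $\HH$ entails completeness, which is immediate from the lower frame bound $c\|f\|^2\le \sum |\langle f,\varphi\rangle|^2$: if $f$ is orthogonal to every frame element then $c\|f\|^2\le 0$, forcing $f=0$. Thus the whole argument is: finite $\G$ $\Rightarrow$ Bessel $\Rightarrow$ (by Theorem \ref{ScToDscr}) a finite subsystem would be a frame $\Rightarrow$ a finite set is complete in $\HH$ $\Rightarrow$ $\dim\HH<\infty$, contradicting the hypothesis. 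Contrapositively, a frame over $t\in[0,L]$ in an infinite-dimensional $\HH$ requires $|\G|=\infty$.
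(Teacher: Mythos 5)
Your argument is correct and is exactly the route the paper intends: the corollary is stated as an immediate consequence of Theorem \ref{ScToDscr}, using the implication (i)$\Rightarrow$(iii) together with the observation that a finite family of vectors cannot be a frame (hence complete) in an infinite dimensional $\HH$ — the same reasoning the authors spell out later inside the proof of Lemma \ref{Teo5.5 ItNormOp}. Your remarks that a finite $\G$ is automatically Bessel and that the lower frame bound forces completeness are precisely the points needed to make the deduction rigorous.
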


If $\{e^{tA}\}_{t\in[0,\infty)}$ is an exponentially stable semigroup, the extension of Theorem \ref{ScToDscr} from $[0,L]$ to $[0,\infty)$ can be done in two different ways: adapting the ideas of Theorem \ref{ScToDscr} or restricting $t$ to  $[0,L]$ and then using Theorem \ref{ScToDscr}. We separate these two approaches in the following two subsections.   

\subsection{Discretization of the infinite time frame: Method 1.}

\begin{definition} We say that a discrete subset of complex numbers $\Lambda$ is  uniformly separated if  $$\inf_{\lambda,\lambda^\prime\in\Lambda, \lambda\not=\lambda^\prime} |\lambda-\lambda^\prime|>0.$$
\end{definition}

\begin{theorem}\label{ScToDscrInfiniteTime}
	Let $\ObsDynOp\in\mathcal{B}(\ObsDynSp)$  and let $\G$ be a Bessel system of vectors in $\ObsDynSp$. If $\{e^{tA}\}_{t\in[0,\infty)}$ is exponentially stable, then the following are equivalent.
	\begin{enumerate}
	    \item[i)] $\{e^{tA}g \}_{g\in\G,t\in[0,\infty)}$ is a semi-continuous frame for $\mathcal{H}$.
	    \item[ii)] There exists $\delta>0$ such that for any uniformly separated countable set of the form $T=\{t_j: j\in \N\}$ with $t_1=0$ and $0<t_{j+1}-t_j<\delta$ $\forall i\in \N$, the system $\{e^{tA}g\}_{g\in \G,t\in T}$ is a frame for $\ObsDynSp$. 
	    \item[iii)] There exists $m>0$ such that for the  countable set $T=\{t_j:= i\cdot m : j=0,1,\ldots \}$,  the system $\{e^{tA}g\}_{g\in \G,t\in T}$ is a frame for $\ObsDynSp$.
	    
	\end{enumerate}
\end{theorem}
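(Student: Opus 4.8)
The plan is to prove the cycle $iii)\Rightarrow i)\Rightarrow ii)\Rightarrow iii)$, since $ii)\Rightarrow iii)$ is trivial (the arithmetic progression $\{jm\}$ with $m<\delta$ is uniformly separated and of the required form, once $m$ is chosen below $\delta$). The two substantive implications are $iii)\Rightarrow i)$ and $i)\Rightarrow ii)$, and both hinge on comparing the continuous integral $\int_0^\infty |\langle f, e^{tA^*}g\rangle|^2\,dt$ with a Riemann-type sum $\sum_j |\langle f, e^{t_j A^*}g\rangle|^2$ over the sampling set $T$. The Bessel hypothesis on $\G$ together with exponential stability is exactly what makes these comparisons work: writing $b(t):=\sum_{g\in\G}|\langle f, e^{tA^*}g\rangle|^2$, exponential stability gives $\|e^{tA^*}\|\le M e^{\omega t}$ with $\omega<0$, so $b(t)\le (\sum_g \|g\|^2)\,M^2 e^{2\omega t}\|f\|^2$, hence $b$ is integrable on $[0,\infty)$ with a tail that decays uniformly in $f$ (after normalizing $\|f\|=1$). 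This uniform tail decay is what upgrades the finite-interval argument of Theorem~\ref{ScToDscr} to the half-line.

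For the upper frame bound in $i)$ and for showing the discrete sums in $ii)$, $iii)$ are Bessel, I would first record the elementary estimate that $t\mapsto b(t)$ is Lipschitz-like on compact sets: using $\frac{d}{dt}e^{tA^*}=A^*e^{tA^*}$ and the product rule, $|b(t)-b(s)|\le C_f |t-s|$ on any bounded interval, where on the half-line $C_f$ can be taken of the form $C\|f\|^2$ uniformly, again because of exponential decay of $\|e^{tA^*}\|$ and $\|A^*e^{tA^*}\|$. Consequently, for any uniformly separated $T=\{t_j\}$ with consecutive gaps in $(\,\underline{\delta},\delta)$ — here uniform separation from below, $t_{j+1}-t_j>\underline\delta>0$, is what keeps $\sum_j b(t_j)$ finite — one has both
\[
\sum_j b(t_j)\;\le\; \frac{1}{\underline\delta}\int_0^\infty b(t)\,dt \;+\; (\text{error}) \quad\text{and}\quad \delta\sum_j b(t_j)\;\ge\;\int_0^\infty b(t)\,dt \;-\;(\text{error}),
\]
where the error terms are controlled by $\sum_j \delta\cdot\sup_{[t_j,t_{j+1}]}|b'| $ and ultimately bounded by $C\delta\|f\|^2$ via the tail decay. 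For the arithmetic progression in $iii)$, $\underline\delta=\delta=m$, so the two-sided comparison is cleanest there. Choosing $\delta$ (equivalently $m$) small enough that $C\delta<\tfrac12 c$, where $c$ is the lower semi-continuous frame bound from $i)$, gives a genuine lower bound $\sum_j b(t_j)\ge \tfrac{c}{2\delta}\|f\|^2$, proving $i)\Rightarrow ii)$; running the inequality in the other direction, and using that $iii)$ supplies a lower frame bound for the sum, recovers $\int_0^\infty b\,dt\ge (\text{const})\|f\|^2$, i.e. $iii)\Rightarrow i)$. The upper bounds throughout are immediate from the Bessel plus exponential-stability estimate above.

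The main obstacle I anticipate is making the ``error term'' bookkeeping genuinely uniform in $f$ over the whole half-line, rather than just on $[0,L]$: one must split $[0,\infty)=[0,L]\cup[L,\infty)$, handle $[0,L]$ by the (already available) finite-interval machinery of Theorem~\ref{ScToDscr} applied to the restricted system, and absorb the tail $[L,\infty)$ into an arbitrarily small multiple of $\|f\|^2$ using exponential stability — then optimize $L$ and $\delta$ together. A secondary point requiring care is that in $ii)$ one is handed an \emph{arbitrary} uniformly separated set with gaps bounded above by $\delta$ but only bounded below by the (unspecified) separation constant; since the Bessel-type upper bound $\sum_j b(t_j)\le \underline\delta^{-1}\int b + \dots$ degrades as $\underline\delta\to0$, one should phrase the comparison so that only the \emph{lower} frame inequality is needed with a gap-independent constant (it is, because refining the partition only increases $\sum_j b(t_j)$ up to the controlled error), while the upper Bessel bound for such $T$ follows separately from uniform separation. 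With these two caveats addressed, the rest is the routine Riemann-sum estimate already sketched, parallel to \cite[Theorem 5.4]{FIACPO} but shortened by the identities for $e^{tA}$ from Section~\ref{prelim_exp}.
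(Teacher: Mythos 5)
Your handling of $i)\Rightarrow ii)$ (and the trivial $ii)\Rightarrow iii)$) is essentially the paper's: compare $\int_0^\infty b(t)\,dt$, where $b(t):=\sum_{g\in\G}|\langle e^{tA^*}f,g\rangle|^2$, with the Riemann-type sum over $T$, make the error small and uniform in $f$ by combining the Bessel constant of $\G$ with exponential stability (the paper uses the operator-norm continuity bound $\|e^{(t-t_j)A}-I\|<\varepsilon$ where you use a bound on $b'$; both work), and use the lower separation constant only to sum the geometric tail $e^{2\omega t_j}$ and to get the discrete Bessel bound. That part of your plan is sound.

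The genuine gap is in $iii)\Rightarrow i)$. There the step size $m$ is \emph{given} by the hypothesis and cannot be shrunk (``choosing $\delta$, equivalently $m$, small enough'' is not available, and nothing says the system stays a frame if $m$ is replaced by a smaller value), yet your argument needs the Riemann-sum error, which is of size $C\,m\,\|f\|^2$ with $C$ built from $K,M,\omega,\|A\|$, to be dominated by the discrete lower frame bound $c'$; the inequality $\int_0^\infty b\,dt\ \ge\ m\,(c'-C)\|f\|^2$ you would obtain may be vacuous since there is no reason why $c'>C$. Your proposed repair --- split $[0,\infty)=[0,L]\cup[L,\infty)$ and invoke Theorem~\ref{ScToDscr} on $[0,L]$ --- does not close this: the relevant implication of that theorem requires the \emph{finite} system $\{e^{t_jA}g\}_{g\in\G,\,t_j\le L}$ to be a frame, which does not follow from $iii)$ (a subfamily of a frame need not be a frame, and when $|\G|<\infty$ a finite family can never be a frame for an infinite dimensional $\HH$, cf.\ Corollary~\ref{frame tiempo finito implica G inf}). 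The paper's proof of this direction avoids any error term altogether: by the group law, $\int_{t_j}^{t_{j+1}}b(t)\,dt=\int_0^{m}\sum_{g\in\G}|\langle e^{tA^*}f,\,e^{t_jA}g\rangle|^2\,dt$, so summing over $j$ and applying the discrete lower frame bound to the vector $e^{tA^*}f$ for each fixed $t\in[0,m]$ gives $\int_0^\infty b(t)\,dt\ \ge\ c\int_0^m\|e^{tA^*}f\|^2\,dt\ \ge\ c\,\frac{1-e^{-2m\|A\|}}{2\|A\|}\,\|f\|^2$, the last step using the invertibility of $e^{tA^*}$ (i.e.\ $\|e^{tA^*}f\|\ge e^{-t\|A\|}\|f\|$). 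You should replace your $iii)\Rightarrow i)$ argument with this change-of-variables argument (valid for arbitrary fixed $m$); the upper bound in $i)$ then follows, as you note, directly from the Bessel property of $\G$ together with exponential stability.
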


\begin{proof} 
    $i)\Rightarrow ii)$ From the assumption that $\G$ is a Bessel sequence in $\ObsDynSp$,  there exists $K>0$ such that $\sum_{g\in\G}|\langle f,g\rangle|^2\leq K\|f\|^2,$ for all $f\in\ObsDynSp$. Since $A$ is a bounded operator and $\{e^{tA}\}_{t\geq 0}$ is exponentially stable,  there are constants $\omega<0$ and $M\geq 1$ such that for any  $0\leq t<\infty$, one has
	\begin{align}\label{AtgBessel2}
	    \sum_{g\in\G}|\langle f,e^{tA}g\rangle|^2&=\sum_{g\in\G}|\langle e^{tA^*}f,g\rangle|^2 \leq KM^2e^{2\omega t}\|f\|^2.	\notag
	\end{align}
	Let $T=\{t_j:j\in\N\}$ be a uniformly separated subset of $[0,\infty)$. Let
    \begin{equation*}
     0<\delta_0:=\inf_{j\in \N}|t_{j+1}-t_j|.   
    \end{equation*}
     For all $j\in\N$ we have that $j\cdot \delta_0\leq t_j$.
    Then,  since $\omega<0$, we obtain
    \begin{equation*}
	\sum_{j\in \N}\sum_{g\in\G}|\langle f,e^{t_jA}g\rangle|^2\leq
	KM^2\left(\sum_{j\in \N}e^{2\omega t_j}\right)\|f\|^2\leq \widetilde{K}\|f\|^2
	\end{equation*} 
	for  $0<\widetilde{K}:=\frac{KM^2}{1-e^{2\omega\delta_0}}<\infty$.
    
    The goal now is to find $\delta>0$ such that for any uniformly separated set $T=\{t_j:j\in\N\}$ with $t_1=0$,  $t_j<t_{j+1}$ for all $j\in \N$ and $t_{j+1}-t_{j}<\delta$,  the system $\{e^{tA}g\}_{g\in \G,t\in T}$ is a frame for $\ObsDynSp$, as long as  $\{e^{tA}g \}_{g\in\G,t\in[0,\infty)}$ is a semi-continuous frame for $\mathcal{H}$, i.e.,  
    \begin{equation*}\label{equation 24 2}
        c\|f\|^2\leq\sum_{g\in\G}\int_{0}^{\infty}|\langle f,e^{tA}g\rangle |^2dt\leq C\|f\|^2, \qquad \text {for all } f \in \ObsDynSp,
    \end{equation*} 
    for some $c, C>0$.
    To do that,  we estimate the difference $ \Delta$ in the following way:
         \begin{eqnarray*}
	    \Delta
	    &:=&\left|{\sum_{g\in\G}\int_{0}^{\infty}|\langle f,e^{tA}g\rangle|^2dt-\sum_{g\in\G}\sum_{j=1}^{\infty}\int_{t_j}^{t_{j+1}}\ |\langle f,e^{t_jA}g\rangle|^2}dt\right| \\
	    &\leq&\sum_{j=1}^{\infty}\int_{t_j}^{t_{j+1}}\sum_{g\in\G}\left| {|\langle f,e^{tA}g\rangle|^2-|\langle f,e^{t_jA}g\rangle|^2}  \right| dt\\
	    &\leq&\sum_{j=1}^{\infty}\int_{t_j}^{t_{j+1}}\sum_{g\in\G}\left(|\langle e^{tA^*}f,g\rangle|+|\langle e^{t_jA^*}f,g\rangle|\right)|\langle e^{tA^*}f-e^{t_jA^*}f,g\rangle|dt\\
	    &\leq&\sum_{j=1}^{\infty}\int_{t_j}^{t_{j+1}}\left(\sum_{g\in\G}(|\langle e^{tA^*}f,g\rangle|+|\langle e^{t_jA^*}f,g\rangle|)^2\right)^{1/2}\left(\sum_{g\in\G}|\langle e^{tA^*}f-e^{t_jA^*}f,g\rangle|^2\right)^{1/2}dt\\
	    &\leq&K\sum_{j=1}^{\infty}\int_{t_j}^{t_{j+1}}\left(\|e^{tA^*}f\|^2+\|e^{t_jA^*}f\|^2\right)^{1/2}\left(\|e^{tA^*}f-e^{t_jA^*}f\|^2\right)^{1/2}dt\\
	    &\leq &\sqrt{2}KM^2 \, \left(\sum_{j=1}^{\infty}e^{\omega t_j}\int_{t_j}^{t_{j+1}}\|e^{(t-t_j)\ObsDynOp}-I\|\ dt \right)\, \|f\|^2.
    \end{eqnarray*}
    Given $\varepsilon >0$ we take $\delta>0$  such that for $|t-t_{j}|<\delta$ we have 
    \begin{equation}\label{epsilon}
    \|e^{(t-t_j)\ObsDynOp}-I\|<\varepsilon.    
    \end{equation}
    Since $T$ is uniformly separated, $j \cdot \delta_0\leq t_j$
     for all $j\in\N$.
    Then, from the last inequality we get 
    \begin{align}\label{DiffContDiscDefinitive 2}
         \Delta &\leq\left(\sum_{j=1}^{\infty}e^{\omega t_j}\right) \left(\sqrt{2}KM^2\delta\right)\varepsilon\|f\|^2
         \leq \left( \frac{\sqrt{2}KM^2\delta}{1-e^{\omega\delta_0}}\right)\varepsilon\|f\|^2. \notag
    \end{align}
    Choosing  $\delta$ and $\varepsilon$ so small that  \eqref{epsilon} is satisfied and  $\left(\frac{\sqrt{2}KM^2\delta}{1-e^{\omega\delta_0}}\right)\varepsilon<c/2$, we achieve 
    $$\delta\sum_{g\in\G}\sum_{i=1}^{\infty}|\langle f,e^{t_iA}g\rangle|^2\geq c\|f\|^2-\frac{c}{2}\|f\|^2=\frac{c}{2}\|f\|^2.$$ Therefore,  for any uniformly separated countable set $T=\{t_j: j\in \N\}$ with $t_1=0$ and $0<t_{j+1}-t_j<\delta$ for all $ j\in \N$, the system  $\{e^{tA}g\}_{g\in\G,t\in T}$ is a frame for $\ObsDynSp$.\par
    
    $ii)\Rightarrow iii)$ Is trivial.
    
    $iii)\Rightarrow i)$ 
    Since $\{e^{tA}\}_{t\geq 0}$ is exponentially stable and $\G$ is Bessel with constant $K$, it holds that $\{e^{tA}g\}_{g\in\G,t\in[0,\infty]}$ is Bessel. Indeed, 
    \begin{equation*}
	\int_{0}^{\infty}\sum_{g\in\G}|\langle f,e^{tA}g\rangle|^2dt\leq K\int_0^\infty \|e^{tA^*}f\|^2dt\leq
	KM^2\left(\int_{0}^{\infty}e^{2\omega t}dt\right)\|f\|^2\leq \widetilde{K}\|f\|^2
	\end{equation*} 
	for a constant $0<\widetilde{K}<\infty$.
    
    Finally, let us see that $\{e^{tA}g\}_{g\in\G,t\in[0,\infty)}$ is bounded below under the assumption that there exists $m>0$ and a set of the form $T=\{t_j:= j\cdot m : j=0,1,\ldots \}$ such that  $\{e^{tA}g\}_{g\in\G,t\in T}$ is a frame for $\ObsDynSp$ with frame constants $c,C>0$  i.e., 
    \[c\|f\|^2\leq\sum_{g\in\G}\sum_{j=1}^{\infty}|\langle f,e^{t_jA}g\rangle|\leq C\|f\|^2 \qquad \text {for all } f \in \ObsDynSp. \]  
   We have that, 
    \begin{eqnarray*}
	    \sum_{g\in\G}\int_{0}^{\infty}|\langle f,e^{tA}g\rangle|^2dt&=&\sum_{g\in\G}\sum_{j=1}^{\infty}\int_{t_j}^{t_{j+1}}|\langle f,e^{tA}g\rangle|^2dt\\
	    &=&\sum_{g\in\G}\sum_{j=1}^{\infty}\int_{0}^{t_{j+1}-t_{j}}|\langle (e^{tA^*}f,e^{t_jA}g\rangle|^2dt\\
	  	    &\geq&\int_{0}^{m}c\|e^{tA^*}f\|^2dt\\
	    &\geq& \widetilde{c} \, \|f\|^2,
    \end{eqnarray*}
where $0< \widetilde{c}=c \cdot \frac{1-e^{-2m\|\ObsDynOp\|}}{2\|A\|}<\infty$. 
    This concludes the proof that  $\{e^{tA}g\}_{g\in\G,t\in[0,\infty)}$ is a semi-continuous frame for $\ObsDynSp$.     
\end{proof}

\begin{remarkx}\
\begin{enumerate}
\item As  in the finite time case, the statement $(ii)$ is equivalent to the same statement changing ``for any'' to ``for a''.
\item 
Unlike what happened in the case $L<\infty$, for the infinite time case, the Bessel condition of $\{e^{tA}g\}_{g\in \G, t\in[0,\infty)}$ is not trivially satisfied by requiring $\G$ to be a Bessel system. We mention that, by Datko's Theorem (cf. for eg. \cite[Corollary 1.1.14]{Tucsnak}),  the expression $\int_0^\infty \| e^{tA^*}f\|^2dt$,  is finite if and only if $\{e^{tA}\}$ is exponentially stable. Therefore, this proof does not remain valid if we remove the exponentially stable condition. 
\end{enumerate}
\end{remarkx}

\subsection{Discretization of the infinite time frame: Method 2.}
\

The following results, Theorem \ref{prop 6.5.2 tucsnak} and Proposition \ref{6.1.13 T}, are particular cases of  \cite[Propositions 6.5.2]{Tucsnak} and \cite[Proposition 6.1.13]{Tucsnak}. For completeness of the presentation we provide adaptations of the original proofs to our context since their original statements are written in the language of control theory instead of the language of dynamical sampling. For a full understanding on the relations between these two topics we refer the reader to \cite{dmm}.

\begin{theorem}\label{prop 6.5.2 tucsnak}
    Let $\ObsDynOp\in\mathcal{B}(\ObsDynSp)$ and let $\G$ a subset of vectors in $\HH$ such that 
	$\{e^{tA}g \}_{g\in\G,t\in[0,\infty)}$ is a semi-continuous frame for $\mathcal{H}$. If $\{e^{tA} \}_{t\in[0,\infty)}$ is exponentially stable, then there exist $0<L<\infty$ for which $\{e^{tA}g \}_{g\in\G,t\in[0,L]}$ is a semi-continuous frame for $\mathcal{H}$.
\end{theorem}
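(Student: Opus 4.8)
The plan is to notice that the Bessel (upper) bound on the finite interval $[0,L]$ is inherited for free from the one on $[0,\infty)$, so the only substantive point is the lower bound, and that will come from making the tail $\int_L^\infty$ small via exponential stability combined with the semigroup property.

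First I would fix the frame constants $c,C>0$ of the hypothesis, so that
\[
c\|f\|^2\le \sum_{g\in\G}\int_0^\infty |\langle f,e^{tA}g\rangle|^2\,dt\le C\|f\|^2\qquad(\forall f\in\mathcal{H}),
\]
and fix $M\ge 1$, $\omega<0$ with $\|e^{tA}\|=\|e^{tA^*}\|\le Me^{\omega t}$ for all $t\ge 0$ (Definition \ref{def_exp_estable} and the basic properties of $e^{tA}$ recalled in Section \ref{prelim_exp}). Then, for an arbitrary $L>0$, I would estimate the tail: writing $\langle f,e^{tA}g\rangle=\langle e^{tA^*}f,g\rangle$, using the cocycle identity $e^{tA^*}=e^{(t-L)A^*}e^{LA^*}$ valid for $t\ge L$, and substituting $s=t-L$ (legitimate term by term since all integrands are non-negative, so Tonelli applies), one gets
\[
\sum_{g\in\G}\int_L^\infty |\langle f,e^{tA}g\rangle|^2\,dt
=\sum_{g\in\G}\int_0^\infty |\langle e^{sA^*}(e^{LA^*}f),g\rangle|^2\,ds
\le C\,\|e^{LA^*}f\|^2\le CM^2e^{2\omega L}\|f\|^2,
\]
where the first inequality is the upper frame bound applied to the vector $e^{LA^*}f$, and the last one is exponential stability.

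Since $\omega<0$, I can now choose $L$ so large that $CM^2e^{2\omega L}<c/2$. For that $L$, subtracting the tail from the total gives, for every $f\in\mathcal{H}$,
\[
\sum_{g\in\G}\int_0^L |\langle f,e^{tA}g\rangle|^2\,dt
=\sum_{g\in\G}\int_0^\infty|\langle f,e^{tA}g\rangle|^2\,dt-\sum_{g\in\G}\int_L^\infty|\langle f,e^{tA}g\rangle|^2\,dt
\ge c\|f\|^2-\tfrac{c}{2}\|f\|^2=\tfrac{c}{2}\|f\|^2,
\]
while trivially $\sum_{g\in\G}\int_0^L|\langle f,e^{tA}g\rangle|^2\,dt\le \sum_{g\in\G}\int_0^\infty|\langle f,e^{tA}g\rangle|^2\,dt\le C\|f\|^2$. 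Hence $\{e^{tA}g\}_{g\in\G,t\in[0,L]}$ is a semi-continuous frame for $\mathcal{H}$ with bounds $c/2$ and $C$.

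I do not expect a genuine obstacle here: the only care needed is in justifying the interchange of sum and integral and the change of variables inside the double sum/integral, which is automatic by non-negativity (Tonelli). The conceptual point is simply that exponential stability is precisely what forces the tail to be both integrable and exponentially small in $L$, which is also why the hypothesis cannot be dropped (compare the Datko-theorem remark following Theorem \ref{ScToDscrInfiniteTime}).
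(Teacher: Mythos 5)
Your proposal is correct and follows essentially the same route as the paper's proof: the Bessel bound is inherited trivially, and the lower bound is obtained by estimating the tail $\int_L^\infty$ via the semigroup identity and the upper frame bound applied to $e^{LA^*}f$, then using exponential stability to make $CM^2e^{2\omega L}$ smaller than the lower frame constant. The only cosmetic difference is that you fix the explicit margin $c/2$ (and justify the interchange of sum and integral by Tonelli), while the paper simply takes $L$ large enough that $c-C\|e^{LA}\|^2>0$.
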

\begin{proof}
    One hypothesis says that there exist positive constants $c$ and $C$ such that
    \begin{equation*}
        c\|f\|^2\leq\int_{0}^{\infty}\sum_{g\in\G}|\langle f,e^{tA}g\rangle |^2 dt \leq C\|f\|^2.
    \end{equation*}
   Note that
    \begin{equation*}
        \int_{0}^{L}\sum_{g\in\G}|\langle f,e^{tA}g\rangle|^2dt
        \leq\int_{0}^{\infty}\sum_{g\in\G}|\langle f,e^{tA}g\rangle|^2dt \qquad \forall \, 0<L<\infty,
    \end{equation*}
    and therefore the Bessel condition follows immediately without using the assumption of having an exponentially stable semigroup. 
    
    To prove the lower bound,
    \begin{align*}
        \int_{0}^{L}\sum_{g\in\G}|\langle f,e^{tA}g\rangle|^2dt &=
            \int_{0}^{\infty}\sum_{g\in\G}|\langle f,e^{tA}g\rangle|^2dt -     \int_{L}^{\infty}\sum_{g\in\G}|\langle f,e^{tA}g\rangle|^2dt\\
            &\geq c\|f\|^2 - \int_{0}^{\infty}\sum_{g\in\G}|\langle f,e^{(t+L)A}g\rangle|^2dt\\
            &= c\|f\|^2 - \int_{0}^{\infty}\sum_{g\in\G}|\langle e^{LA^*}f,e^{tA}g\rangle|^2dt\\
             &\geq (c-C\|e^{LA}\|^2)\|f\|^2
    \end{align*}
    Since $e^{tA}$ is exponentially stable, there exists $\omega<0$ and $M\geq 1$ such that $\|e^{tA}\|\leq Me^{\omega t}$ for all $t\geq 0$. Hence taking $L$ sufficiently big such that   
     $(c-CMe^{2\omega L})>0$ we obtain
     \begin{equation*}
         \int_{0}^{L}\sum_{g\in\G}|\langle f,e^{tA}g\rangle|^2dt \geq (c-CMe^{2\omega L}) \|f\|^2 \qquad \forall f\in \HH.
     \end{equation*}
\end{proof}

\begin{proposition}\label{6.1.13 T} (Sufficient condition to be exponentially stable.)\
    Let $\ObsDynOp\in\mathcal{B}(\ObsDynSp)$ and let $\G$ be a Bessel system of vectors in $\ObsDynSp$ such that $\{e^{tA}g\}_{g\in\G, t\in [0,\infty) }$ is a Bessel system in $\ObsDynSp$ and  $\{e^{tA}g\}_{g\in\G, t\in [0,L] }$ is a semi-continuous frame  for $\ObsDynSp$ for some $0<L<\infty$. Then, $e^{tA}$ is an exponentially stable semigroup.    
\end{proposition}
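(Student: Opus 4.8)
The plan is to establish exponential stability by producing a uniform decay estimate of the form $\|e^{\tau A}\| < 1$ for some finite $\tau$, and then invoke the standard semigroup fact that such an estimate forces $\|e^{tA}\| \le M e^{\omega t}$ with $\omega < 0$ (one writes $t = n\tau + r$ with $0 \le r < \tau$ and uses $\|e^{tA}\| \le \|e^{rA}\|\,\|e^{\tau A}\|^n$, bounding $\|e^{rA}\|$ uniformly over the compact interval $[0,\tau]$ by continuity from Section \ref{prelim_exp}). So the crux is to show that for $t$ large the operator $e^{tA}$ is a strict contraction. The natural quantity to track is the "energy" $E(f,t) := \int_t^\infty \sum_{g\in\G} |\langle e^{sA^*}f, g\rangle|^2\, ds$, which is finite for every $f$ precisely because $\{e^{tA}g\}_{g\in\G,t\in[0,\infty)}$ is assumed Bessel. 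A change of variables $s = t + u$ gives $E(f,t) = \int_0^\infty \sum_{g\in\G}|\langle e^{uA^*}(e^{tA^*}f), g\rangle|^2\, du = E(e^{tA^*}f, 0)$, so $E(\cdot,t)$ is the full $[0,\infty)$-energy of the evolved vector.

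First I would use the finite-time frame lower bound: since $\{e^{tA}g\}_{g\in\G,t\in[0,L]}$ is a semi-continuous frame, there is $c > 0$ with $\int_0^L \sum_{g\in\G}|\langle e^{sA^*}h,g\rangle|^2\,ds \ge c\|h\|^2$ for all $h \in \HH$. Applying this with $h = e^{tA^*}f$ and recognizing the left side as $E(f,t) - E(f,t+L)$, one gets
\begin{equation*}
E(f,t) - E(f,t+L) \;=\; \int_t^{t+L}\sum_{g\in\G}|\langle e^{sA^*}f,g\rangle|^2\,ds \;\ge\; c\|e^{tA^*}f\|^2.
\end{equation*}
Next I would bound $\|e^{tA^*}f\|^2$ below in terms of $E(f,t)$ itself. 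Here the Bessel hypothesis on $\G$ together with $\|e^{uA^*}\| \le M e^{\omega_0 u}$ (with $\omega_0 = \|A\|$, from Section \ref{prelim_exp}) gives, for any $\eta > 0$,
\begin{equation*}
\int_t^{t+\eta}\sum_{g\in\G}|\langle e^{sA^*}f,g\rangle|^2\,ds \;\le\; K\int_0^\eta \|e^{uA^*}(e^{tA^*}f)\|^2\,du \;\le\; K M^2 \frac{e^{2\omega_0\eta}-1}{2\omega_0}\,\|e^{tA^*}f\|^2 \;=:\; \kappa\,\|e^{tA^*}f\|^2,
\end{equation*}
and telescoping this over the windows $[t, t+\eta], [t+\eta, t+2\eta], \ldots$ bounds $E(f,t) \le \kappa \sum_{k\ge 0}\|e^{(t+k\eta)A^*}f\|^2$; but a cleaner route is simply: take $\eta = L$ above to get $E(f,t) - E(f,t+L) \le \kappa \|e^{tA^*}f\|^2$ as well, so combined with the previous display, $\|e^{tA^*}f\|^2$ is comparable to $E(f,t) - E(f,t+L)$. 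Now the key monotonicity observation: $E(f,t)$ is nonincreasing in $t$ and, being an integral tail of an $L^1([0,\infty))$ integrand, $E(f,t) \to 0$ as $t\to\infty$ — hence the increments $E(f,t)-E(f,t+L)$ cannot stay bounded away from zero, and in fact summing the telescoping series $\sum_{n\ge 0}(E(f,nL)-E(f,(n+1)L)) = E(f,0) \le C\|f\|^2$ shows $\sum_{n\ge 0}\|e^{nLA^*}f\|^2 \le \frac{C}{c}\|f\|^2$.

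From $\sum_{n\ge 0}\|e^{nLA^*}f\|^2 \le \frac{C}{c}\|f\|^2$ for all $f$, I would conclude as follows: the sequence $a_n := \|e^{nLA^*}\|^2$ (operator norm) is submultiplicative-ish — more precisely $\|e^{(n+1)LA^*}f\| \le \|e^{LA^*}\|\,\|e^{nLA^*}f\|$, so the sequence $\|e^{nLA^*}f\|$ is, up to the constant $\|e^{LA^*}\|$, slowly varying; since its squares are summable for every $f$, the uniform boundedness principle gives $\sup_n \|e^{nLA^*}\| < \infty$ and then summability forces $\|e^{n_0 L A^*}\| < 1$ for some $n_0$ (if $\|e^{nLA^*}\| \ge 1$ for all $n$, then $\|e^{nLA^*}f_0\| \not\to 0$ for a suitable unit vector, contradicting square-summability after using that consecutive terms cannot drop too fast). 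Taking adjoints, $\|e^{n_0 L A}\| < 1$, which is the strict-contraction estimate that yields exponential stability by the reduction in the first paragraph.

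The main obstacle I anticipate is the passage from "$\sum_n \|e^{nLA^*}f\|^2 \le \text{const}\cdot\|f\|^2$ for every $f$" to "$\|e^{n_0 L A^*}\| < 1$ for some $n_0$": one needs a genuinely uniform (in $f$) contraction, and this requires combining the uniform boundedness principle with the submultiplicative control $\|e^{(n+1)LA^*}f\| \le \|e^{LA^*}\|\|e^{nLA^*}f\|$ to rule out the bad scenario where the norms $\|e^{nLA^*}f\|$ decay for each fixed $f$ but not uniformly. This is exactly the discrete analogue of Datko's theorem (referenced in the Remark after Theorem \ref{ScToDscrInfiniteTime}), and the honest thing is likely to cite \cite[Proposition 6.1.13]{Tucsnak} or Datko's theorem directly for that implication rather than reprove it; the contribution of the proof here is reducing the semi-continuous-frame hypotheses to the square-summability input that Datko's machinery consumes.
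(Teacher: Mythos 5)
Your reduction to the square--summability estimate $\sum_{k\ge 0}\|e^{kLA^*}f\|^2\le \tfrac{C}{c}\|f\|^2$ is essentially the paper's first half: the paper also splits $[0,\infty)$ into the blocks $[kL,(k+1)L]$, applies the $[0,L]$ frame lower bound to $e^{kLA^*}f$ on each block, and uses the infinite-time Bessel bound; your energy function $E(f,t)$ and the telescoping are the same computation in different notation. Your outer reduction (a strict contraction $\|e^{n_0LA}\|<1$ plus writing $t=mn_0L+t_0$ gives exponential stability) also matches the paper's closing step.

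The gap is the bridge between these two parts, and your own sketch of it does not work as written. The uniform boundedness principle is not needed: each summand of the square-summable series already gives $\|e^{kLA^*}f\|^2\le \tfrac{C}{c}\|f\|^2$ for every $f$, hence $\|e^{kLA^*}\|^2\le \tfrac{C}{c}$ for all $k$ directly. More seriously, the contradiction argument (``if $\|e^{nLA^*}\|\ge 1$ for all $n$, then $\|e^{nLA^*}f_0\|\not\to 0$ for a suitable unit vector'') is not valid as stated: the near-norming vectors depend on $n$, and square-summability for each fixed $f$ is perfectly compatible with $\|e^{nLA^*}\|\ge 1$ unless one makes the decay uniform in $f$ --- which is exactly the point at issue. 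Your fallback of citing Datko's theorem or \cite[Proposition 6.1.13]{Tucsnak} would be mathematically legitimate but is somewhat circular here, since the proposition being proved is precisely an adaptation of that result. The paper instead closes the gap in two lines with an averaging trick you almost reach: since $\|e^{(n-k)LA^*}\|^2\le \tfrac{C}{c}$, one has $\|e^{nLA^*}f\|^2\le \tfrac{C}{c}\,\|e^{kLA^*}f\|^2$ for every $1\le k\le n$; averaging over $k$ and using $\sum_{k=1}^n\|e^{kLA^*}f\|^2\le \tfrac{C}{c}\|f\|^2$ yields $\|e^{nLA^*}\|^2\le \tfrac{C^2}{n\,c^2}$, so $\|e^{nLA}\|<1$ for $n$ large, which is exactly the input your first paragraph needs.
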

\begin{proof}
    We will see that $\|e^{nLA}\|< 1$ for $n\in\N$ sufficiently large and then prove that this implies that the group is exponentially stable.
    
    Since $\{e^{tA}g\}_{g\in\G, t\in [0,L] }$ is a semi-continuous frame  for $\ObsDynSp$, there exist a constant $c>0$  such that for all $ f\in\ObsDynSp$ and for all $\tau\geq 0$
    \begin{equation*}
      c\|f\|^2\leq\int_0^L\sum_{g\in\G}|\langle f,e^{tA}g\rangle|dt=\int_\tau^{L+\tau}\sum_{g\in\G}|\langle f,e^{(t-\tau)A}g\rangle|dt.  
    \end{equation*}
    Since $\{e^{tA}g\}_{g\in\G, t\in [0,\infty) }$ is a Bessel system in $\ObsDynSp$, there exist a constant $C>0$  such that for all $ f\in\ObsDynSp$
   $
      \int_0^\infty\sum_{g\in\G}|\langle f,e^{tA}g\rangle|dt\leq C\|f\|^2.  
$
    Using  the previous equations we have
    \begin{align*}
        C\|f\|^2        &\geq\sum_{k\geq 0} \int_{kL}^{L+kL}\sum_{g\in\G}|\langle f,e^{tA}g\rangle|dt
        =\sum_{k\geq 0} \int_{kL}^{L+kL}\sum_{g\in\G}|\langle e^{kLA^*}f,e^{(t-kL)A}g\rangle|dt\\
        &\geq c\sum_{k\geq 0} \|e^{kLA^*}f\|^2.
    \end{align*}
    In particular, for all $n\in\N$ and for all $k\geq 0$
    \begin{equation*}\label{6.1.13 tucsnak 3}
        \sum_{k=1}^n\|e^{kLA^*}f\|^2\leq \frac{C}{c}\|f\|^2 \quad  \text{ and }  \quad 
         \|e^{kLA^*}f\|^2\leq \frac{C}{c}\|f\|^2 \quad (\forall f\in \mathcal{H}).
    \end{equation*}
     
      Therefore we obtain a bound for the operator norm
 $       \|e^{kLA}\|^2=\|e^{kLA^*}\|^2\leq\frac{C}{c} \ \forall k\geq 0$. 
    As before, using these equations we obtain for all $n\in\N$ and for all $f\in \mathcal{H}$
    \begin{equation*}
        \|e^{nLA^*}f\|^2=\frac{1}{n}\sum_{k=1}^n\|e^{(n-k)LA^*}e^{kLA^*}f\|^2=\leq \frac{C}{nc}\sum_{k=1}^n\|e^{kLA^*}f\|^2\leq \frac{C^2}{nc^2}\|f\|^2.
    \end{equation*}
    That is, for all $n\in \N$, $\|e^{nLA}\|=\|e^{nLA^*}\|\leq \frac{C}{\sqrt{n} \, c}$. Therefore there exist $n$ and $\omega <0$
  such that $\|e^{nLA}\|=\|e^{nLA^*}\|< e^{\omega nL}<1 $. Moreover, since $\|e^{tA^*}\|$ is continuous as a function of $t$ and $e^{\omega t}$ decreasing, we can find $M\geq 1$ satisfiying $\|e^{tA^*}\|< Me^{\omega t}$ for all $t\leq nL$. For $t>0$ let us write $t = m\cdot nL +t_0$, with $t_0<nL$. Therefore, 
    \begin{align*}
        \|e^{tA}\|=\|e^{tA^*}\|\leq \|e^{nLA^*}\|^m\|e^{t_0A^*}\|\leq M^2 e^{\omega(mnL + t_0)} = M^2 e^{\omega t}.
    \end{align*}
    This concludes the proof.
\end{proof}

As a result of Theorems \ref{ScToDscr} and  \ref{prop 6.5.2 tucsnak} we have the following corollary.

\begin{corollary}\label{exponentially stable if only if infinite implies finite}
     Let $\ObsDynOp\in\mathcal{B}(\ObsDynSp)$ and let $\G$ be a Bessel system of vectors in $\ObsDynSp$. If $\{e^{tA}g\}_{g\in\G, t\in [0,\infty) }$ is a semi-continuous frame in $\ObsDynSp$, then the following  are equivalent.
     \begin{enumerate}
         \item[i)]  There exists some  $0<L<\infty$ such that $\{e^{tA}g\}_{g\in\G, t\in [0,L] }$ is a semi-continuous frame  for $\ObsDynSp$.
         \item[ii)] There exists a finite partition $T=\{t_j:j=1,\ldots,n\}$ and $0= t_1< t_2<\ldots<t_n\leq L$ of $[0,L]$ such that $\{e^{tA}g \}_{g\in\G,t\in T}$ is a frame for $\ObsDynSp$.
        \item[iii)]  The semigroup $e^{tA}$ is exponentially stable. 
     \end{enumerate} 
\end{corollary}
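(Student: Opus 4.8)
The plan is to prove the three implications $i)\Rightarrow iii)\Rightarrow ii)\Rightarrow i)$, stringing together the theorems already at our disposal. For $i)\Rightarrow iii)$: we are given that $\{e^{tA}g\}_{g\in\G,t\in[0,\infty)}$ is a semi-continuous frame and, by hypothesis of $i)$, that $\{e^{tA}g\}_{g\in\G,t\in[0,L]}$ is a semi-continuous frame for some $0<L<\infty$. Since $\G$ is Bessel and $\{e^{tA}g\}_{g\in\G,t\in[0,\infty)}$ is a (semi-continuous) frame, it is in particular a Bessel system on $[0,\infty)$; hence the hypotheses of Proposition~\ref{6.1.13 T} are met, and we conclude that $e^{tA}$ is exponentially stable. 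For $iii)\Rightarrow ii)$: assuming $e^{tA}$ is exponentially stable, Theorem~\ref{prop 6.5.2 tucsnak} gives some $0<L<\infty$ for which $\{e^{tA}g\}_{g\in\G,t\in[0,L]}$ is a semi-continuous frame; then Theorem~\ref{ScToDscr} (the equivalence $i)\Leftrightarrow iii)$ there) yields a finite partition $T=\{t_j:j=1,\ldots,n\}$ of $[0,L]$ with $0=t_1<\cdots<t_n\le L$ such that $\{e^{tA}g\}_{g\in\G,t\in T}$ is a frame for $\HH$.

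For $ii)\Rightarrow i)$: suppose there is a finite set $T=\{t_1,\ldots,t_n\}\subset[0,L]$ with $\{e^{tA}g\}_{g\in\G,t\in T}$ a frame for $\HH$. The lower frame bound on the finite set immediately gives, for every $f$,
\[
c\|f\|^2\le\sum_{g\in\G}\sum_{j=1}^n|\langle f,e^{t_jA}g\rangle|^2\le\sum_{g\in\G}\int_0^L|\langle f,e^{tA}g\rangle|^2\,dt+\text{(correction)},
\]
but the cleanest route is simply to note that a frame on the finite set $T\subset[0,L]$ is in particular a frame on $T$ viewed as a subset of $[0,\infty)$, and invoke Theorem~\ref{ScToDscr}, $iii)\Rightarrow i)$, to get that $\{e^{tA}g\}_{g\in\G,t\in[0,L]}$ is a semi-continuous frame; this establishes statement $i)$ directly. (One should double-check that the finite set $T$ appearing in $ii)$ can be taken to be a partition with $t_1=0$ in the sense required by Theorem~\ref{ScToDscr}(iii); enlarging $T$ by adding the point $0$ and interior points only improves the lower bound and preserves the upper Bessel bound, so this causes no difficulty.) Since $i)$ is exactly ``there exists $0<L<\infty$ with $\{e^{tA}g\}_{g\in\G,t\in[0,L]}$ a semi-continuous frame,'' the cycle closes.

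I expect no serious obstacle here, as the corollary is a bookkeeping assembly of Theorems~\ref{ScToDscr}, \ref{prop 6.5.2 tucsnak} and Proposition~\ref{6.1.13 T}. The one point demanding care is verifying that the Bessel hypothesis needed by Proposition~\ref{6.1.13 T}—namely that $\{e^{tA}g\}_{g\in\G,t\in[0,\infty)}$ is Bessel—is genuinely available in the $i)\Rightarrow iii)$ step; but this is free, since by assumption that family is a semi-continuous frame and hence automatically satisfies the upper bound. A secondary point is matching up the precise form of the finite set $T$ between the statement of $ii)$ and the hypotheses of Theorem~\ref{ScToDscr}; as noted above, passing to a finite partition containing $0$ only strengthens the lower bound, so the translation is harmless.
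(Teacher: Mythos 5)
Your proposal is correct and is essentially the assembly the paper intends: the paper gives no explicit proof, simply asserting the corollary follows from Theorems~\ref{ScToDscr} and~\ref{prop 6.5.2 tucsnak} (with Proposition~\ref{6.1.13 T}, stated immediately before, supplying the $i)\Rightarrow iii)$ direction exactly as you use it). Your side remarks in the $ii)\Rightarrow i)$ step are unnecessary --- the set $T$ in $ii)$ already matches the hypotheses of Theorem~\ref{ScToDscr}$(iii)$ verbatim --- but they are harmless.
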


Hence, as in Corollary \ref{frame tiempo finito implica G inf}, for exponentially stable semigroups we have the following corollary.

\begin{corollary}\label{exp stab implies G inf}
     If $A\in \mathcal{B}(\HH)$ is such that $\{e^{tA}\}_{t\in [0,\infty)}$ an exponentially stable semigroup and  $\{e^{tA}g\}_{g\in\G, t\in [0,\infty)}$ is a frame for an infinite dimensional Hilbert space $\HH$, then $|\G|=\infty$.
\end{corollary}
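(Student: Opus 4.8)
The plan is to combine Corollary \ref{exponentially stable if only if infinite implies finite} with Corollary \ref{frame tiempo finito implica G inf}. Assume $A\in\mathcal B(\HH)$ generates an exponentially stable semigroup and that $\{e^{tA}g\}_{g\in\G,\,t\in[0,\infty)}$ is a semi-continuous frame for $\HH$. First I would note that, since $\G$ being a frame (in particular a Bessel system) for $\HH$ and $\{e^{tA}\}$ being exponentially stable, the hypotheses of Corollary \ref{exponentially stable if only if infinite implies finite} are met; its implication $iii)\Rightarrow i)$ then yields some $0<L<\infty$ for which $\{e^{tA}g\}_{g\in\G,\,t\in[0,L]}$ is a semi-continuous frame for $\HH$. (Alternatively one can invoke Theorem \ref{prop 6.5.2 tucsnak} directly, which is exactly the statement that exponential stability lets us pass from $[0,\infty)$ to a finite interval $[0,L]$.)

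Next I would apply Corollary \ref{frame tiempo finito implica G inf}: since $\HH$ is infinite dimensional and $\{e^{tA}g\}_{g\in\G,\,t\in[0,L]}$ is a frame for $\HH$, it follows that $|\G|=\infty$. This gives the conclusion.

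There is essentially no obstacle here — the corollary is a one-line composition of two results already established in the section. The only point requiring a word of care is checking that the Bessel hypothesis needed by Corollary \ref{exponentially stable if only if infinite implies finite} is genuinely available: a frame is automatically a Bessel system, so $\G$ being a frame for $\HH$ (hence Bessel) suffices, and exponential stability then guarantees (as recorded in the proof of Theorem \ref{ScToDscrInfiniteTime}, via Datko's theorem) that $\{e^{tA}g\}_{g\in\G,\,t\in[0,\infty)}$ is itself a Bessel system, so all the cited results apply verbatim.
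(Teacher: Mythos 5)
Your argument is essentially the paper's intended one: use exponential stability to pass from $[0,\infty)$ to a finite interval $[0,L]$ (Theorem \ref{prop 6.5.2 tucsnak} / Corollary \ref{exponentially stable if only if infinite implies finite}) and then invoke Corollary \ref{frame tiempo finito implica G inf} to force $|\G|=\infty$. One caveat: your justification of the Bessel hypothesis in Corollary \ref{exponentially stable if only if infinite implies finite} asserts that ``$\G$ is a frame (hence Bessel)'', but that is not among the hypotheses of the statement --- the hypothesis concerns the orbit $\{e^{tA}g\}_{g\in\G,\,t\in[0,\infty)}$, not $\G$ itself, and the semi-continuous frame inequality does not by itself give a discrete Bessel bound for $\G$. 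This is harmless here because your alternative route, invoking Theorem \ref{prop 6.5.2 tucsnak} directly, requires no Bessel assumption on $\G$ (nor does Corollary \ref{frame tiempo finito implica G inf}); alternatively you could argue by contradiction, assuming $|\G|<\infty$, in which case $\G$ is trivially Bessel and the corollary you cite applies as stated.
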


From this result we conclude, that if one expects to be able to sample using only a finite number of spatial points, then one needs to consider operators, that do not generate exponentially stable semigroups.

\section{Relations between continuous dynamical sampling and discrete dynamical sampling}\label{diagonalization}

\subsection{Bounded operators with an orthonormal or unconditional basis of eigenvectors}
\begin{theorem}\label{teo_bon} Let $\mathcal{H}$ be a separable Hilbert space and let $A\in\mathcal{B}(\mathcal{H})$, having an orthonormal basis of eigenvectors $\{e_j\}$ such that $Ae_j= -\lambda_j e_j$ with $\{\lambda_j\}\subset \C_+$. Let $g^i\in \mathcal{H}$ for $i\in I$ where $I$ is a countable set (finite or infinite) of indexes. Then, $\{e^{tA} g^i\}_{i\in I, \, t\in[0,\infty)}$ is a semi-continuous frame for $\mathcal{H}$ if and only if $\{(\mathfrak{h}(A))^n a^i\}_{i\in I, \, n\in \N\cup\{0\}}$ is a frame for $\mathcal{H}$, where $\mathfrak{h}$ is as in \eqref{M del D en C} and  $a^i$ are defined as 
    $a^i_j :=\frac{\sqrt{2}}{1+{\lambda_j}}  g_j^i$ (where $g^i_j:=\langle g^i,e_j\rangle$ and $a^i_j:=\langle a^i,e_j\rangle$).
\end{theorem}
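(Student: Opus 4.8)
The plan is to diagonalize $A$ via the orthonormal basis $\{e_j\}$ and transport the whole problem into a weighted $\ell^2$ sequence space, where the semi-continuous frame condition becomes an integral identity over $[0,\infty)$ that can be computed explicitly. Writing $f = \sum_j f_j e_j$ with $f_j = \langle f, e_j\rangle$, the adjoint acts as $A^* e_j = -\overline{\lambda_j} e_j$, so $e^{tA^*}g^i = \sum_j e^{-\overline{\lambda_j} t} g^i_j e_j$ and hence
\begin{equation*}
\langle f, e^{tA}g^i\rangle = \langle e^{tA^*}f, g^i\rangle = \sum_j e^{-\lambda_j t} f_j \overline{g^i_j}.
\end{equation*}
The semi-continuous frame condition $c\|f\|^2 \le \sum_{i\in I}\int_0^\infty |\langle f, e^{tA}g^i\rangle|^2\,dt \le C\|f\|^2$ is what we must match with the discrete frame condition for $\{(\mathfrak h(A))^n a^i\}$.

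**Key steps.** First I would record that $\mathfrak h(A)$ is the normal operator with $\mathfrak h(A) e_j = \mathfrak h(\lambda_j) e_j$ (note $A$ has eigenvalues $-\lambda_j$, but it is cleaner to track the eigenvalues of $-A$, which are $\lambda_j \in \C_+$, and $\mathfrak h$ maps $\C_+$ into $\D$ after composition with the right sign convention — one should check carefully which of $\mathfrak h(\lambda_j)$ or $\mathfrak h(-\lambda_j)$ appears; since $\mathfrak h:\D\to\C_+$ is the stated map and $\C_+$ is the eigenvalue side, the relevant point in $\D$ is $\mathfrak h^{-1}(\lambda_j) = \mathfrak h(\lambda_j)$ because $\mathfrak h$ is an involution). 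Then $\langle f, (\mathfrak h(A))^n a^i\rangle = \sum_j \mathfrak h(\lambda_j)^n f_j \overline{a^i_j}$, so the discrete frame sum is $\sum_{i}\sum_{n\ge 0}|\sum_j \mathfrak h(\lambda_j)^n f_j \overline{a^i_j}|^2$. The core computation is the continuous side: using the substitution $z = e^{-2\,\mathrm{Re}(\lambda_j)t}$ or directly expanding $|\sum_j e^{-\lambda_j t} f_j \overline{g^i_j}|^2 = \sum_{j,k} e^{-(\lambda_j + \overline{\lambda_k})t} f_j \overline{f_k}\,\overline{g^i_j} g^i_k$ and integrating term by term against $dt$ on $[0,\infty)$ gives
\begin{equation*}
\int_0^\infty |\langle f, e^{tA}g^i\rangle|^2\,dt = \sum_{j,k} \frac{f_j \overline{f_k}\,\overline{g^i_j} g^i_k}{\lambda_j + \overline{\lambda_k}}.
\end{equation*}
On the discrete side, summing the geometric series $\sum_{n\ge 0}\mathfrak h(\lambda_j)^n \overline{\mathfrak h(\lambda_k)}^n = \frac{1}{1 - \mathfrak h(\lambda_j)\overline{\mathfrak h(\lambda_k)}}$ gives $\sum_{n\ge0}|\langle f,(\mathfrak h(A))^n a^i\rangle|^2 = \sum_{j,k}\frac{f_j\overline{f_k}\,\overline{a^i_j}a^i_k}{1-\mathfrak h(\lambda_j)\overline{\mathfrak h(\lambda_k)}}$. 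A direct algebraic identity — the key lemma-level fact — is that for $\lambda,\mu\in\C_+$,
\begin{equation*}
\frac{1}{1-\mathfrak h(\lambda)\overline{\mathfrak h(\mu)}} = \frac{(1+\lambda)(1+\overline{\mu})}{2(\lambda+\overline{\mu})},
\end{equation*}
which is exactly the Cayley-transform intertwining relation (compare \eqref{k en k 2}). Substituting $a^i_j = \frac{\sqrt2}{1+\lambda_j}g^i_j$ converts $\frac{\overline{a^i_j}a^i_k}{1-\mathfrak h(\lambda_j)\overline{\mathfrak h(\lambda_k)}}$ into $\frac{2}{(1+\overline{\lambda_j})(1+\lambda_k)}\cdot\frac{(1+\lambda_j)(1+\overline{\lambda_k})}{2(\lambda_j+\overline{\lambda_k})}\overline{g^i_j}g^i_k = \frac{\overline{g^i_j}g^i_k}{\lambda_j+\overline{\lambda_k}}$, making the two double sums termwise identical. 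Hence the two quadratic forms agree for every $f$, so one frame inequality holds iff the other does, with the same constants.

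**Main obstacle.** The routine term-by-term integration and the geometric-series manipulation must be justified by absolute convergence / Fubini, so the technical heart is verifying that the double sums converge absolutely — this is where the Bessel/Carleson hypotheses implicit in the statement (or derivable from one of the two frame conditions together with boundedness of $A$ and $\mathfrak h(A)$) enter. I would handle this by first establishing the identity of the quadratic forms on a dense set (finitely supported $f$, where all sums are finite and Fubini is trivial), and then noting that if either system is Bessel, the corresponding quadratic form is bounded, hence continuous, so the identity extends to all of $\mathcal H$ by density; the lower frame bound then transfers automatically. The one genuine subtlety worth stating carefully is the sign/involution bookkeeping for $\mathfrak h$: since $Ae_j = -\lambda_j e_j$ with $\lambda_j\in\C_+$, and since $\mathfrak h$ is its own inverse mapping $\D\leftrightarrow\C_+$, the operator $\mathfrak h(A)$ in the statement should be read as the normal operator with eigenvalues $\mathfrak h(\lambda_j)\in\D$ (applying $\mathfrak h$ to the $\C_+$-eigenvalues of $-A$), and this is consistent because $\mathfrak h\circ\mathfrak h = \mathrm{id}$; I would make this explicit at the start to avoid any ambiguity.
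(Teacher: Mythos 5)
Your proof is essentially the paper's own argument: your two double-sum expansions are exactly Lemmas \ref{teo_bon_lemma_disc} and \ref{teo_bon_lemma_cont} (there phrased as norms in $H^2(\D)$ and $H^2(\C_+)$), your Cayley identity $\frac{1}{1-\mathfrak{h}(\lambda)\overline{\mathfrak{h}(\mu)}}=\frac{(1+\lambda)(1+\overline{\mu})}{2(\lambda+\overline{\mu})}$ is precisely the kernel relation \eqref{k en k 2} underlying the isometry $V$, your reading of $\mathfrak{h}(A)$ as the operator with eigenvalues $\mathfrak{h}(\lambda_j)\in\D$ is the paper's intended one, and your finite-support-plus-Bessel justification of the interchanges matches the paper's treatment. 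The only thing to repair is the conjugation bookkeeping: with the inner product linear in the first slot, $\langle f,e^{tA}g^i\rangle=\sum_j e^{-\overline{\lambda_j}t}f_j\overline{g^i_j}$ and $\langle f,(\mathfrak{h}(A))^n a^i\rangle=\sum_j \overline{\mathfrak{h}(\lambda_j)}^{\,n} f_j\overline{a^i_j}$, so the kernels are $1/(\overline{\lambda_j}+\lambda_k)$ and $1/(1-\overline{\mathfrak{h}(\lambda_j)}\,\mathfrak{h}(\lambda_k))$ and the termwise cancellation is then exact, whereas as literally written your substitution leaves an uncancelled unimodular factor $\frac{(1+\lambda_j)(1+\overline{\lambda_k})}{(1+\overline{\lambda_j})(1+\lambda_k)}$ (harmless for the conclusion, but the displayed identity is off until the conjugates are placed consistently).
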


For the proof we need the following two lemmas. 
The idea of the first one is taken from \cite{suarez}.

\begin{lemma}\label{teo_bon_lemma_disc}
Let $\mathcal{H}$ be a separable Hilbert space, $a\in\mathcal{H}$ and $A\in \mathcal{B}(\mathcal H)$ having an orthonormal basis  of eigenvectors. If $\{A^na\}_{n\in\N\cup\{0\}}$ is a Bessel system in $\mathcal{H}$, then 
    \begin{equation}\label{disc_suarez}
        \sum_{n=0}^\infty |\langle A^n a , c \rangle|^2=\left\|   \sum_{j} \overline{a_j} \,  c_j \, k^\D_{\eta_j}\right\|_{H^2(\D)}^2 \qquad \forall \, c\in\mathcal{H},
    \end{equation}
where $\{\eta_j\}_{j}$ is the set of eigenvalues of $A$ and  $a_j$ and $c_j$ denote the coordinates of $a$ and $c$ with respect to the orthonormal basis of eigenvectors of $A$.
\end{lemma}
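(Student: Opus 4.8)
The plan is to diagonalize $A$, establish the identity \eqref{disc_suarez} first for finitely supported $a$ by a Parseval computation in $H^2(\D)$ (the idea attributed to \cite{suarez}), and then use the Bessel hypothesis to pass to the limit. Let $\{e_j\}$ be an orthonormal basis with $Ae_j=\eta_j e_j$, and write $a=\sum_j a_j e_j$, $c=\sum_j c_j e_j$; then $A^n a=\sum_j a_j\eta_j^n e_j$, so $\langle A^n a,c\rangle=\sum_j a_j\overline{c_j}\,\eta_j^n$. Testing the Bessel inequality $\sum_{n\geq 0}|\langle A^n a,h\rangle|^2\leq B\|h\|^2$ against $h=e_j$ gives $|a_j|^2\sum_{n\geq 0}|\eta_j|^{2n}<\infty$, so $|\eta_j|<1$ whenever $a_j\neq0$; since the indices with $a_j=0$ contribute nothing to either side of \eqref{disc_suarez}, we may assume $\{\eta_j\}\subset\D$, and the kernels $k^\D_{\eta_j}$ are defined.

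Next I would record the finite version. For a finite index set $F$ let $P_F$ be the orthogonal projection onto $\overline{\text{span}}\{e_j:j\in F\}$. Using that $\{z^n\}_{n\geq 0}$ is an orthonormal basis of $H^2(\D)$ and $k^\D_s(z)=\sum_{n\geq 0}\overline{s}^{\,n}z^n$, the $n$-th Taylor coefficient of $\sum_{j\in F}\overline{a_j}c_j\,k^\D_{\eta_j}$ is $\sum_{j\in F}\overline{a_j}c_j\,\overline{\eta_j}^{\,n}=\overline{\sum_{j\in F}a_j\overline{c_j}\eta_j^n}$, so Parseval in $H^2(\D)$ yields
$$\Big\|\sum_{j\in F}\overline{a_j}c_j\,k^\D_{\eta_j}\Big\|_{H^2(\D)}^2=\sum_{n=0}^\infty\Big|\sum_{j\in F}a_j\overline{c_j}\eta_j^n\Big|^2=\sum_{n=0}^\infty|\langle A^n P_F a,c\rangle|^2,$$
both sides being finite since $\max_{j\in F}|\eta_j|<1$. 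This is \eqref{disc_suarez} at the finite level.

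To pass to the limit, fix an increasing sequence $F_1\subseteq F_2\subseteq\cdots$ of finite sets whose union is the whole index set. Since $A$ is diagonal it commutes with each $P_{F_N}$ and $P_{F_N}^*=P_{F_N}$, whence $\langle A^n P_{F_N}a,c\rangle=\langle A^n a,P_{F_N}c\rangle$. Applying the Bessel bound to $h=(P_{F_M}-P_{F_N})c$ together with the finite identity (with $F=F_M\setminus F_N$) shows that the partial sums $G_N:=\sum_{j\in F_N}\overline{a_j}c_j\,k^\D_{\eta_j}$ satisfy $\|G_M-G_N\|_{H^2(\D)}^2\leq B\,\|(P_{F_M}-P_{F_N})c\|^2\to 0$, so $(G_N)$ is Cauchy and the series $\sum_j\overline{a_j}c_j\,k^\D_{\eta_j}$ converges in $H^2(\D)$ to some $G$ (the same estimate, applied to arbitrary finite sets avoiding a large one, shows the convergence is unconditional). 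Finally $\|G\|^2=\lim_N\|G_N\|^2=\lim_N\sum_{n\geq 0}|\langle A^n a,P_{F_N}c\rangle|^2$, and one more application of the Bessel bound, to $h=(I-P_{F_N})c$, gives $\sum_{n\geq 0}\big|\langle A^n a,P_{F_N}c\rangle-\langle A^n a,c\rangle\big|^2\leq B\|(I-P_{F_N})c\|^2\to0$, so this limit equals $\sum_{n\geq 0}|\langle A^n a,c\rangle|^2$, which is \eqref{disc_suarez}.

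The only genuine obstacle is this convergence bookkeeping: the equality of the two expressions is a formal consequence of Parseval and the power series of the kernel, but one must verify that $\sum_j\overline{a_j}c_j\,k^\D_{\eta_j}$ really defines an element of $H^2(\D)$, and that is precisely the role played by the Bessel hypothesis on $\{A^n a\}$.
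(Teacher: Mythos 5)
Your proof is correct and follows essentially the same route as the paper: the identity is obtained from the $H^2(\D)$ structure of the kernels $k^\D_{\eta_j}$ (your Parseval computation in the basis $\{z^n\}$ is just the expanded form of the paper's kernel inner-product calculation), with the Bessel hypothesis used exactly as in the paper to justify passing from finitely supported truncations to the general case. Your write-up is in fact slightly more careful than the paper's, since you explicitly handle the indices with $a_j=0$ and spell out the Cauchy/limit bookkeeping that the paper only sketches in one line.
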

\begin{proof}
 First we note that $\{\eta_j\}_j\subset \D$. Indeed, denote by $\{e_j\}$ the orthonormal basis of $\mathcal{H}$ of eigenvectors of $A$, $Ae_j=\eta_je_j$. Then, for each $j_0\in\N$,
 \begin{equation}\label{aux_geom}
     \sum_{n=0}^\infty |\langle A^n a , e_{j_0} \rangle|^2=\sum_{n=0}^\infty |\eta_{j_0}|^{2n}|a_{j_0}|^2.
 \end{equation}
 Since $\{A^na\}_{n\in\N\cup\{0\}}$ is Bessel, \eqref{aux_geom} is finite. Therefore,  $|\eta_{j_0}|<1$  for every $j_0\in\N$. 
 
 Now consider $c\in\mathcal{H}$. Then,
 \begin{align*}
 \sum_{n=0}^\infty |\langle A^n a , c \rangle|^2  &= \sum_{n=0}^\infty\sum_{k,j} \eta_j^n \overline{\eta_k}^n  \,  \overline{a_k} \,  c_k \, {a_j} \,  \overline{c_j}
= \sum_{k,j} \left(\sum_{n=0}^\infty\eta_j^n \overline{\eta_k}^n\right)  \,  \overline{a_k} \,  c_k \, {a_j} \,  \overline{c_j}\\
 &= \sum_{k,j}  \frac{1}{1-\eta_j \overline{\eta_k}} \,  \overline{a_k} \,  c_k \, {a_j} \,  \overline{c_j}= \sum_{k,j}  k^\D_{\eta_k}(\eta_j) \,  \overline{a_k} \,  c_k \, {a_j} \,  \overline{c_j}\\
 &= \sum_{k,j} \langle k^\D_{\eta_k},k^\D_{\eta_j} \rangle_{H^2(\D)} \overline{a_k} \,  c_k \, {a^i_j} \,  \overline{c_j}
 =\left\langle \sum_{k} \overline{a_k} \,  c_k \, k^\D_{\eta_k} , \ \sum_{j} \overline{a_j} \,  c_j \, k^\D_{\eta_j}\right\rangle_{H^2(\D)}\\
&= \left\|   \sum_{j} \overline{a_j} \,  c_j \, k^\D_{\eta_j}\right\|_{H^2(\D)}^2.
 \end{align*}
The change in the order of summation in the second equality is justified by first considering  finite sequences  $c^N = \{c_j\}_j$ such that $c_j = 0$ for all $ j \geq N$ for  $N\in\N$, and then using the assumption that $\{A^na\}_{n\in\N\cup\{0\}}$ is a Bessel system.
\end{proof}
\begin{lemma}\label{teo_bon_lemma_cont}
Let $\mathcal{H}$ be a separable Hilbert space, $f\in\mathcal{H}$ and $A\in \mathcal{B}(H)$ having an orthonormal basis of eigenvectors. If $\{e^{tA}g\}_{t\in[0,\infty)}$ is a Bessel system in $\mathcal{H}$, then 
\begin{equation}\label{continuo_suarez}
    \int_0^\infty |\langle e^{tA} g , c \rangle|^2 dt=2\pi\left\| \sum_{j}   \overline{g_j} \, c_j \, k^{\C_+}_{\lambda_j} \right\|_{H^2(\C_+)}^2 \qquad \forall \, c\in\mathcal{H},
\end{equation}    
where $\{-\lambda_j\}_{j}$ is the set of eigenvalues of $A$ and  $g_j$ and $c_j$ denote the coordinates of $g$ and $c$ with respect to the orthonormal basis of eigenvectors of $A$.
\end{lemma}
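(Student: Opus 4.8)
### Proof proposal for Lemma~\ref{teo_bon_lemma_cont}

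The plan is to mirror the computation in the proof of Lemma~\ref{teo_bon_lemma_disc}, but working in the right half-plane $\C_+$ (with kernel $k^{\C_+}_s(z)=\frac{1}{2\pi(z+\overline s)}$) instead of the disc, and using the integral representation $\int_0^\infty e^{-\mu t}\,dt=\frac1\mu$ in place of the geometric series $\sum_n z^n=\frac1{1-z}$. Let $\{e_j\}$ be the orthonormal basis of eigenvectors with $Ae_j=-\lambda_j e_j$, so that $e^{tA}e_j=e^{-t\lambda_j}e_j$. First I would show that $\mathop{\mathrm{Re}}(\lambda_j)>0$ for every $j$, i.e.\ $\{\lambda_j\}\subset\C_+$: testing the Bessel assumption against $e_{j_0}$ gives
\begin{equation*}
\int_0^\infty|\langle e^{tA}g,e_{j_0}\rangle|^2\,dt=|g_{j_0}|^2\int_0^\infty e^{-2t\mathop{\mathrm{Re}}(\lambda_{j_0})}\,dt,
\end{equation*}
which is finite only if $\mathop{\mathrm{Re}}(\lambda_{j_0})>0$ (the case $g_{j_0}=0$ being trivial for that index).

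Next, for a fixed $c\in\mathcal H$ I expand $\langle e^{tA}g,c\rangle=\sum_j e^{-t\lambda_j}\,g_j\,\overline{c_j}$, square, and integrate termwise:
\begin{align*}
\int_0^\infty|\langle e^{tA}g,c\rangle|^2\,dt
&=\sum_{k,j}\overline{g_k}\,c_k\,g_j\,\overline{c_j}\int_0^\infty e^{-t\lambda_j}e^{-t\overline{\lambda_k}}\,dt
=\sum_{k,j}\frac{\overline{g_k}\,c_k\,g_j\,\overline{c_j}}{\lambda_j+\overline{\lambda_k}}\\
&=2\pi\sum_{k,j}k^{\C_+}_{\lambda_k}(\lambda_j)\,\overline{g_k}\,c_k\,g_j\,\overline{c_j}
=2\pi\sum_{k,j}\langle k^{\C_+}_{\lambda_k},k^{\C_+}_{\lambda_j}\rangle_{H^2(\C_+)}\,\overline{g_k}\,c_k\,g_j\,\overline{c_j}\\
&=2\pi\left\|\sum_j\overline{g_j}\,c_j\,k^{\C_+}_{\lambda_j}\right\|^2_{H^2(\C_+)},
\end{align*}
using $k^{\C_+}_{\lambda_k}(\lambda_j)=\frac1{2\pi(\lambda_j+\overline{\lambda_k})}$ and $\langle k^{\C_+}_{s},k^{\C_+}_{w}\rangle_{H^2(\C_+)}=k^{\C_+}_s(w)$.

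The step that needs care is the interchange of the integral with the double sum, which is exactly the analogue of the reordering issue in Lemma~\ref{teo_bon_lemma_disc}: I would handle it by first proving the identity for truncated data $c^N$ with $c_j=0$ for $j\ge N$ (a finite sum, so Fubini is immediate and all manipulations are legitimate), and then passing to the limit $N\to\infty$. The right-hand side converges because $c^N\to c$ in $\mathcal H$ and $c\mapsto\sum_j\overline{g_j}c_j k^{\C_+}_{\lambda_j}$ is bounded from $\mathcal H$ into $H^2(\C_+)$ — indeed the left-hand side evaluated at $c^N$ equals $\int_0^\infty\sum_{t}\cdots$ which is dominated by the Bessel bound for $\{e^{tA}g\}_{t\in[0,\infty)}$ applied to $c^N$, giving uniform control $2\pi\|\sum_j\overline{g_j}c^N_j k^{\C_+}_{\lambda_j}\|^2_{H^2(\C_+)}\le B\|c^N\|^2\le B\|c\|^2$; and on the left-hand side, $\langle e^{tA}g,c^N\rangle\to\langle e^{tA}g,c\rangle$ together with the Bessel bound justifies passing the limit under the integral (e.g.\ by dominated convergence after extracting the Bessel-square-summable majorant, or by the continuity of the bounded analysis operator $c\mapsto(t\mapsto\langle e^{tA}g,c\rangle)$ into $L^2[0,\infty)$). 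This is the main obstacle; the algebraic core is a one-line computation once the integrability of $e^{-t(\lambda_j+\overline{\lambda_k})}$ over $[0,\infty)$ is in hand, which is guaranteed by $\mathop{\mathrm{Re}}(\lambda_j+\overline{\lambda_k})>0$ from the first step.
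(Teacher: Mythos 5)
Your proposal is correct and follows essentially the same route as the paper's proof: you establish $\{\lambda_j\}\subset\C_+$ by testing the Bessel condition on the eigenvectors, then expand $\langle e^{tA}g,c\rangle$ in the eigenbasis, integrate termwise using $\int_0^\infty e^{-t(\lambda_j+\overline{\lambda_k})}\,dt=\frac{1}{\lambda_j+\overline{\lambda_k}}$, identify the resulting kernel sums with $2\pi\langle k^{\C_+}_{\lambda_k},k^{\C_+}_{\lambda_j}\rangle_{H^2(\C_+)}$, and justify the sum--integral interchange by truncating $c$ and invoking the Bessel bound, which is exactly the paper's argument (it refers this last step back to the discrete lemma). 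Your explicit treatment of the limiting step and of the indices with $g_{j_0}=0$ is, if anything, slightly more detailed than the paper's.
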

\begin{proof}
As in the previous lemma we first see that the Bessel condition together with the hypothesis of having an orthonormal basis $\{e_j\}$ of eigenvectors of $A$ easily imply that $\{\lambda_j\}\subset \C_{+}$. For each $j_0\in\N$,
 \begin{equation}\label{aux_geom_2}
     \int_0^\infty |\langle e^{tA} g , e_{j_0} \rangle| ^2 dt  =\left( \int_0^\infty e^{-2t\mathop{\mathrm{Re}}(\lambda_{j_0})}dt\right) |g_{j_0}|^2.
 \end{equation}
 By hypothesis, the left side of  \eqref{aux_geom_2} is finite. This holds if and only if
 $\mathop{\mathrm{Re}}(\lambda_{j_0})>0$, and this must be satisfied for every $j_0\in\N$. Therefore
\begin{align*}
\int_0^\infty |\langle e^{tA} g , c \rangle|^2 dt  &=\int_0^\infty\sum_{k,j} e^{-t\lambda_j}e^{-t\overline{\lambda_k}} g_j \, \overline{c_j} \, \overline{g_k} \,  c_k \, dt\\ 
&=\sum_{k,j}\left(\int_0^\infty e^{-t\lambda_j}e^{-t\overline{\lambda_k}}dt \right) g_j \, \overline{c_j} \, \overline{g_k} \,  c_k \\
&=\sum_{k,j}\frac{2\pi}{2\pi(\lambda_j+\overline{\lambda_k})} g_j \, \overline{c_j} \, \overline{g_k} \,  c_k
=2\pi\sum_{k,j}\langle k^{\C_+}_{\lambda_k}, k^{\C_+}_{\lambda_j}\rangle_{H^2(\C_+)} g_j \, \overline{c_j} \, \overline{g_k} \,  c_k\\
&=2\pi\left\| \sum_{j}   \overline{g_j} \, c_j \, k^{\C_+}_{\lambda_j} \right\|_{H^2(\C_+)}^2.
 \end{align*}
The change in the order of the sum and the integral in the second equality is justified in the same way as in the previous lemma.
\end{proof}

\begin{remarkx}\label{teo_bon_remark}
Let $\mathcal{H}$ be a separable Hilbert space, $a,g\in\mathcal{H}$ and $A\in \mathcal{B}(\mathcal H)$ having an orthonormal basis of eigenvectors. From the proofs of the previous lemmas we know that
\begin{itemize}
    \item $\{A^na\}_{n\in\N\cup\{0\}}$ is a Bessel system of $\mathcal{H}$ if and only if the eigenvalues of $A$ are in $\D$;
    \item $\{e^{-tA}g\}_{t\in[0,\infty)}$ is a Bessel system of $\mathcal{H}$ if and only if the eigenvalues of $A$ are in $\C_+$.
\end{itemize}
This remark is a particular case of what is proved  \cite[Section 3]{dmm} when $A$ has an unconditional basis (or a Riesz basis) of eigenvectors.
\end{remarkx}

\begin{proof}[Proof of Theorem \ref{teo_bon}]

First notice that since $A$ is bounded and $\{\lambda_j\} \subset \C_+$, there exists $M>0$ such that $1 < |1+\lambda_j| < M$ for all $j\in \mathbb{N}\cup\{0\}$. Therefore, $a^i\in \mathcal{H}$ if and only if $g^i\in \mathcal{H}$ for every $i\in I$. 

Consider an arbitrary vector $c\in\mathcal{H}$ with coordinates $c_j = \langle c,e_j\rangle$. Using \eqref{disc_suarez}, \eqref{continuo_suarez}  and the isometric isomorphism $V$ between $H^2(\D)$ and $H^2(\C_+)$,  specifically \eqref{k en k 2}, the conclusion follows since for each $i\in I$ we have
\begin{align*}
\int_0^\infty |\langle e^{tA} g^i , c \rangle|^2 dt  
 &=2\pi\left\| \sum_{j}   \overline{g_j^i} \, c_j \, k^{\C_+}_{\lambda_j} \right\|_{H^2(\C_+)}^2
 =\left\| \sum_{j} \sqrt{2\pi} \ \,  \overline{g_j^i} \, c_j \,  V^{-1}\left(k^{\C_+}_{\lambda_j}\right)\right\|_{H^2(\D)}^2\\
&= \left\| \sum_{j} \frac{\sqrt{2}}{1+\overline{\lambda_j}} \,  \overline{g_j^i} \, {c_j} \, k_{\mathfrak{h}(\lambda_j)}^{\D} \right\|_{H^2(\D)}^2
  =  
 \left\|   \sum_{j} \overline{a^i_j} \,  c_j \, k^\D_{\mathfrak{h}(\lambda_j)}\right\|_{H^2(\D)}^2\\
 &=\sum_{n=0}^\infty |\langle (\mathfrak{h}(A))^n a^i , c \rangle|^2.
\end{align*}
\end{proof}

In fact, Theorem~\ref{teo_bon} can be extended to the case when the operator 
$A$ has a Riesz basis (or an unconditional basis) of eigenvectors instead. That is, if we assume $A$ to be ``diagonalizable'' instead of being diagonal as in Theorem \ref{teo_bon}. 

\begin{theorem}\label{teo_bon1} Let $\mathcal{H}$ be a separable Hilbert space and let $A\in\mathcal{B}(\mathcal{H})$, having a Riesz basis of eigenvectors $\{e_j\}$ such that $Ae_j= -\lambda_j e_j$ with $\{\lambda_j\}\subset \C_+$. Let $f^i\in \mathcal{H}$ for $i\in I$ where $I$ is a countable set (finite or infinite) of indexes. Then, $\{e^{tA} g^i\}_{i\in I, \, t\in[0,\infty) }$ is a semi-continuous frame for $\ell^2$ if and only if $\{(\mathfrak{h}(A))^n a^i\}_{i\in I, \, n\in \N_0}$ is a frame for $\mathcal{H}$, where $\mathfrak{h}$ is as in \eqref{M del D en C} and $a^i$ are defined as 
    $a^i_j :=\frac{\sqrt{2}}{1+{\lambda_j}}  g_j^i$ (where $g^i_j:=\langle g^i,e_j\rangle$ and $a^i_j:=\langle a^i,e_j\rangle$). \end{theorem}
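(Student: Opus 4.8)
The strategy is to reduce Theorem~\ref{teo_bon1} to Theorem~\ref{teo_bon} by passing through the unitary (similarity) transformation that turns a Riesz basis of eigenvectors into an orthonormal one. Concretely, if $\{e_j\}$ is a Riesz basis of $\mathcal H$ with $Ae_j = -\lambda_j e_j$, there is a bounded invertible operator $T\in\mathcal B(\mathcal H)$ (not unitary in general) sending $\{e_j\}$ to a fixed orthonormal basis $\{\tilde e_j\}$ of $\mathcal H$; equivalently $T = \Theta^{-1}$ where $\Theta$ is the synthesis operator of $\{e_j\}$ with respect to $\{\tilde e_j\}$. Set $\tilde A := T A T^{-1}$, so $\tilde A \tilde e_j = -\lambda_j \tilde e_j$ and $\tilde A$ is \emph{diagonal} with respect to the orthonormal basis $\{\tilde e_j\}$; note $\tilde A\in\mathcal B(\mathcal H)$ and $\{\lambda_j\}\subset\C_+$ still. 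Then $e^{t\tilde A} = T e^{tA} T^{-1}$ and $\mathfrak h(\tilde A) = T\,\mathfrak h(A)\,T^{-1}$ (the latter because $\mathfrak h$ is, on the spectrum of $A$, a limit of rational functions with no poles on $\sigma(A)\subset\C_+$, and similarity commutes with the holomorphic functional calculus; alternatively just check it coordinatewise on the basis, since both sides are bounded and agree on a complete system).

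**Transport of the frame property.** The key elementary fact is that a bounded invertible operator maps frames to frames: $\{\varphi_k\}$ is a frame for $\mathcal H$ iff $\{T^{*}\varphi_k\}$ is a frame for $\mathcal H$, and likewise for semi-continuous frames, since $\int |\langle f, T^*\varphi_t\rangle|^2\,d\mu(t) = \int |\langle Tf, \varphi_t\rangle|^2\,d\mu(t)$ and $T$ is bounded above and below. Apply this with $T^{-*}$: the system $\{e^{tA}g^i\}_{i,t}$ is a semi-continuous frame for $\mathcal H$ iff $\{T^{-*}e^{tA}g^i\}_{i,t} = \{e^{t(T^{-*}AT^{*})}\,T^{-*}g^i\}_{i,t}$ is. Here one must be a little careful about which similarity acts: writing $B := T^{-*} A T^{*}$ (the adjoint-side conjugation), $B$ is diagonal in the orthonormal basis dual-transported from $\{e_j\}$, with the \emph{same} eigenvalues $-\lambda_j$. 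So, after this reduction, $\{e^{tA}g^i\}$ is a semi-continuous frame iff the diagonal-operator system $\{e^{tB}h^i\}$ is, where $h^i := T^{-*}g^i$; and similarly $\{(\mathfrak h(A))^n a^i\}$ is a frame iff $\{(\mathfrak h(B))^n b^i\}$ is, with $b^i := T^{-*}a^i$.

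**Matching the coefficient vectors.** Now Theorem~\ref{teo_bon} applies to the diagonal operator $B$: $\{e^{tB}h^i\}_{i,t}$ is a semi-continuous frame iff $\{(\mathfrak h(B))^n b'^{\,i}\}_{i,n}$ is a frame, where $b'^{\,i}$ is defined from $h^i$ by the recipe $b'^{\,i}_j = \frac{\sqrt2}{1+\lambda_j} h^i_j$ in the orthonormal eigenbasis of $B$. It remains to check that $b'^{\,i}$ coincides (as a vector in $\mathcal H$, up to the identifications) with $b^i = T^{-*}a^i$, i.e.\ that the two ways of forming the "$a$-vector" — (i) first transform the operator then apply the $\frac{\sqrt2}{1+\lambda_j}$ recipe in the new orthonormal basis, versus (ii) apply the recipe $a^i_j = \frac{\sqrt2}{1+\lambda_j} g^i_j$ in the original Riesz basis and then transform — agree. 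This is because the recipe is diagonal in the eigen-indexing $j$ (it only rescales the $j$-th eigencoordinate by the scalar $\frac{\sqrt2}{1+\lambda_j}$), and the intertwining $T^{-*}$ acts uniformly on eigencoordinates; more precisely, the map "$j$-th coordinate $\mapsto$ $\frac{\sqrt2}{1+\lambda_j}\cdot(\text{that coordinate})$" is the operator $\sqrt2\,(I+A)^{-1}$ expressed in the eigenbasis, $I+A$ is invertible since $-1\notin\sigma(A)$ (as $\sigma(A)\subset -\C_+$), and $(I+B)^{-1} = T^{-*}(I+A)^{-1}T^{*}$, so the diagrams commute. Chaining the three iff's completes the proof.

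**Main obstacle.** The routine-but-delicate point is the bookkeeping of adjoints and which basis carries which indexing: a Riesz basis $\{e_j\}$ need not be orthonormal, so "$g^i_j = \langle g^i, e_j\rangle$" uses the \emph{dual} Riesz basis, and one must verify that conjugating $A$ by the right operator simultaneously (a) turns $A$ into a genuinely diagonal operator on an orthonormal basis, (b) preserves the eigenvalues and the Bessel/frame bounds, and (c) is compatible with the definition of $a^i$ so that Theorem~\ref{teo_bon}'s coefficient recipe lands exactly on the transported vectors. Once the identity $(I+B)^{-1} = T^{-*}(I+A)^{-1}T^{*}$ and the frame-transport lemma are in hand, the rest is formal; no new analytic input (Carleson measures, Hardy-space estimates) is needed beyond what Theorem~\ref{teo_bon} already provides.
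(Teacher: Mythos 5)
Your overall plan (reduce to Theorem~\ref{teo_bon} by conjugating $A$ into a genuinely diagonal operator and transporting the frame property through a bounded invertible operator) is viable, but the reduction as you wrote it fails at its key step. You transport the system with $T^{-*}$ and set $B:=T^{-*}AT^{*}$, claiming $B$ is diagonal with respect to an orthonormal basis. It is not: from $BT^{-*}e_j=T^{-*}Ae_j=-\lambda_jT^{-*}e_j$ the eigenvectors of $B$ are $T^{-*}e_j=(TT^{*})^{-1}\tilde e_j$, and this family is orthonormal only when $TT^{*}=I$, i.e.\ when $T$ is unitary and the original basis was already orthonormal. Equivalently, $B=(TT^{*})^{-1}\tilde A\,(TT^{*})$ is merely similar, not unitarily equivalent, to the diagonal operator $\tilde A=TAT^{-1}$, so Theorem~\ref{teo_bon} (which requires an orthonormal eigenbasis) does not apply to $B$ and the middle equivalence of your chain collapses. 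The repair is to transport with $T$ itself: $\{Te^{tA}g^i\}=\{e^{t\tilde A}Tg^i\}$, where $\tilde A$ is diagonal with respect to the orthonormal basis $\{\tilde e_j\}=\{Te_j\}$, and the frame property is preserved because $\langle f,T\varphi\rangle=\langle T^{*}f,\varphi\rangle$ with $T^{*}$ bounded and invertible.

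Even after this correction, the coefficient matching --- which you yourself flag as the delicate point --- is not settled by the identity you propose: $(I+B)^{-1}=T^{-*}(I+A)^{-1}T^{*}$ is attached to the discarded conjugation, and there is also a sign slip (with $Ae_j=-\lambda_je_j$ the rescaling $e_j\mapsto\frac{\sqrt2}{1+\lambda_j}e_j$ is $\sqrt2(I-A)^{-1}$, not $\sqrt2(I+A)^{-1}$). With the corrected transport, Theorem~\ref{teo_bon} applied to $\tilde A$ and $h^i=Tg^i$ uses the coordinates $h^i_j=\langle Tg^i,\tilde e_j\rangle=\langle g^i,e_j'\rangle$, i.e.\ the coefficients of $g^i$ in its expansion along the eigenvectors $e_j$ (pairing with the biorthogonal system $\{e_j'\}$), so what has to be verified is $\langle a^i,e_j'\rangle=\frac{\sqrt2}{1+\lambda_j}\langle g^i,e_j'\rangle$; reconciling this with the pairing $\langle\cdot,e_j\rangle$ appearing in the statement is precisely the primal/dual bookkeeping that remains open in your write-up and does not follow from a similarity identity alone. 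For comparison, the paper takes a shorter route with no conjugation and no frame-transport lemma: it reruns Lemmas~\ref{teo_bon_lemma_disc} and~\ref{teo_bon_lemma_cont} using the biorthogonal expansions $g=\sum_jg_je_j'$ and $c=\sum_jc_j'e_j$, obtains the same two Hardy-space identities with $c_j$ replaced by the dual coefficients $c_j'$, and then repeats the chain of equalities from the proof of Theorem~\ref{teo_bon} verbatim.
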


\begin{proof}
Let $\{e_j\}$ be the Riesz basis as in the theorem. Consider $\{e_j\}$ and $\{e_j^\prime\}$  biorthogonal systems. Writing $a=\sum_{j}a_je_j^\prime$, $g=\sum_{j}g_je_j^\prime$ and $c=\sum_{j}c_j^\prime e_j$, where $a_j = \langle a, e_j \rangle$, $a_j = \langle g, e_j \rangle$ and $c_j = \langle c, e^\prime_j \rangle$, identities (\ref{disc_suarez}) and (\ref{aux_geom_2}) transform into
\begin{gather*}\label{aux12}
        \sum_{n=0}^\infty |\langle A^n a , c \rangle|^2=\left\|   \sum_{j} \overline{a_j} \,  c_j^\prime \, k^\D_{\eta_j}\right\|_{H^2(\D)}^2 \qquad \forall \, c\in\mathcal{H},\\
        \int_0^\infty |\langle e^{tA} g , c \rangle|^2 dt=2\pi\left\| \sum_{j}   \overline{g_j} \, c_j^\prime \, k^{\C_+}_{\lambda_j} \right\|_{H^2(\C_+)}^2 \qquad \forall \, c\in\mathcal{H}.
    \end{gather*}
    Then, the proof concludes as in Theorem \ref{teo_bon}.
\end{proof}

\subsection{General normal operators}
\

All the previous results (Lemmas \ref{teo_bon_lemma_disc}, \ref{teo_bon_lemma_cont}, Remark \ref{teo_bon_remark} and Theorem \ref{teo_bon}) have been proven  only for those normal operators that are diagonal. In this section we show that in fact, by using the Spectral Theorem with multiplicity, they can be extended to any normal operator. Our main theorem is the following.


\begin{theorem} \label{Cont disc normal op}
    Let $\mathcal{H}$ be a complex separable Hilbert space and let $A\in \mathcal{B}(\mathcal{H})$. Let $g^i\in \mathcal{H}$ for $i\in I$ where $I$ is a countable set (finite or infinite) of indexes. Then, $\{e^{tA} g^i\}_{i\in I, \, t\in[0,\infty)}$ is a semi-continuous frame if and only if \newline
    $\{(\mathfrak{h}(A))^n a^i\}_{i\in I, \, n\in \N\cup\{0\}}$ is a frame, where $\mathfrak{h}$ is as in \eqref{M del D en C} and the $a^i$ are defined by $(U a^i) (z) := \frac{\sqrt{2}}{1+z}(Ug^i)(z)$ with $U$ as defined in the Spectral Theorem with multiplicity (Theorem \ref{spectral theorem}).
\end{theorem}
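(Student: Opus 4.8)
The plan is to rerun the proof of Theorem~\ref{teo_bon}, with the expansion over an eigenbasis replaced by integration against the scalar spectral measures produced by Theorem~\ref{spectral theorem}. Since $A$ is normal, fix the unitary $U\colon\HH\to\mathcal{W}=(L^2(\mu_\infty))^{(\infty)}\oplus L^2(\mu_1)\oplus(L^2(\mu_2))^{(2)}\oplus\cdots$ obtained by applying the Spectral Theorem to $-A$, so that $UAU^{-1}$ is multiplication by $-z$ with $z$ ranging over $\sigma(-A)$ (this is the normalization behind the convention $Ae_j=-\lambda_je_j$ of Theorem~\ref{teo_bon}). Under $U$, $e^{tA}$ becomes multiplication by $e^{-tz}$ (cf.\ Definition~\ref{normal-eat}) and $\mathfrak{h}(A):=(I+A)(I-A)^{-1}$ becomes multiplication by $\mathfrak{h}(z)=\tfrac{1-z}{1+z}$ of \eqref{M del D en C}; the latter is well defined because Bessel-ness (next paragraph) forces $\sigma(-A)\subset\C_+$, where $1+z\neq0$ and $\mathfrak{h}$ maps $\C_+$ biholomorphically onto $\D$. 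Unitary equivalence preserves being a Bessel system, a frame, or a semi-continuous frame, so I may assume $\HH=\mathcal{W}$, that $A$ is multiplication by $-z$, and that $\mathfrak{h}(A)$ is multiplication by $\mathfrak{h}(z)$. Writing a vector as $g=(g_k)_{k\in\N\cup\{\infty\}}$ with $g_k\colon\C\to\ell^2(\Omega_k)$ and $\|g\|^2=\sum_k\int_\C\|g_k(z)\|^2\,d\mu_k(z)$, one has $\langle e^{tA}g,c\rangle=\sum_k\int_\C e^{-tz}\langle g_k(z),c_k(z)\rangle\,d\mu_k(z)$, so everything reduces to the scalar computation of Theorem~\ref{teo_bon} with the products ``$g_j\overline{c_j}$'' replaced by the single $\mu$-integrable function $z\mapsto\langle g(z),c(z)\rangle$ (here $\mu$ denotes the mutually singular family $\{\mu_k\}$ together with its fibre spaces $\ell^2(\Omega_k)$).

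Next I would record the Bessel preliminaries, the analogue of Remark~\ref{teo_bon_remark}: testing Bessel's inequality on spectrally localized vectors shows that $\{e^{tA}g^i\}_{i,t}$ is Bessel iff $\mathrm{Re}(z)>0$ for $\mu$-a.e.\ $z\in\sigma(-A)$ together with a Carleson-type bound on $\sum_i|\langle g^i(z),\cdot\rangle|^2\,d\mu$, and similarly $\{\mathfrak{h}(A)^na^i\}_{i,n}$ is Bessel iff $|\mathfrak{h}(z)|<1$ $\mu$-a.e.\ (again $\mathrm{Re}(z)>0$) together with the corresponding Carleson condition on $\D$. Since $1<|1+z|<1+\|A\|$ on $\sigma(-A)$, the relation $(Ua^i)(z)=\tfrac{\sqrt2}{1+z}(Ug^i)(z)$ defining $a^i$ is a bounded, boundedly invertible multiplication operator (so in particular $a^i\in\HH\iff g^i\in\HH$), which together with the change of variable $\eta=\mathfrak{h}(z)$ matches the two Carleson conditions; hence $\{e^{tA}g^i\}_{i,t}$ is Bessel iff $\{\mathfrak{h}(A)^na^i\}_{i,n}$ is. In particular, if either of the two systems in the statement is a frame, both are Bessel, which is all the identities below require. (All of this is carried out, in the generality of a Riesz basis of eigenvectors, in \cite[Section~3]{dmm}.)

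Third come the two key identities, the direct generalizations of Lemmas~\ref{teo_bon_lemma_cont} and~\ref{teo_bon_lemma_disc}. Expanding $\langle e^{tA}g,c\rangle$ as above, squaring, integrating over $t\in[0,\infty)$, interchanging the two integrations, and using $\int_0^\infty e^{-t(z+\overline{z'})}\,dt=2\pi\,k^{\C_+}_{z'}(z)$ (valid since $\mathrm{Re}(z+\overline{z'})>0$) gives
\[
\int_0^\infty|\langle e^{tA}g,c\rangle|^2\,dt=2\pi\Bigl\|\int_{\sigma(-A)}\overline{\langle g(z),c(z)\rangle}\;k^{\C_+}_{z}\,d\mu(z)\Bigr\|_{H^2(\C_+)}^{2},
\]
exactly as in \eqref{continuo_suarez}; and likewise, with $\sum_{n\ge0}\mathfrak{h}(z)^{n}\overline{\mathfrak{h}(z')}^{\,n}=k^{\D}_{\mathfrak{h}(z')}(\mathfrak{h}(z))$,
\[
\sum_{n=0}^{\infty}|\langle\mathfrak{h}(A)^{n}a,c\rangle|^{2}=\Bigl\|\int_{\sigma(-A)}\overline{\langle a(z),c(z)\rangle}\;k^{\D}_{\mathfrak{h}(z)}\,d\mu(z)\Bigr\|_{H^2(\D)}^{2},
\]
as in \eqref{disc_suarez}. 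To close the loop, apply the isometry $V$ of Theorem~\ref{iso_hardys}: by \eqref{k en k 2} one has $\sqrt{2\pi}\,V^{-1}(k^{\C_+}_{z})=\tfrac{\sqrt2}{1+\overline z}\,k^{\D}_{\mathfrak{h}(z)}$, so applying $V^{-1}$ to the right-hand side of the first identity turns it into $\bigl\|\int_{\sigma(-A)}\overline{\tfrac{\sqrt2}{1+z}\langle g^i(z),c(z)\rangle}\,k^{\D}_{\mathfrak{h}(z)}\,d\mu(z)\bigr\|_{H^2(\D)}^{2}$, which is precisely the right-hand side of the second identity with $a^i$ (as defined in the statement) in place of $a$. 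Hence $\int_0^\infty|\langle f,e^{tA}g^i\rangle|^2\,dt=\sum_{n\ge0}|\langle f,\mathfrak{h}(A)^na^i\rangle|^2$ for every $f\in\HH$ and every $i\in I$; summing over $i\in I$ shows the semi-continuous frame inequalities for $\{e^{tA}g^i\}$ and the frame inequalities for $\{\mathfrak{h}(A)^na^i\}$ hold simultaneously, with the same constants, which is the assertion.

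The main obstacle is making rigorous the two order interchanges ($t$-integral, resp.\ $n$-sum, against the spectral integral) and the convergence of the $H^2$-valued Bochner integrals $\int\overline{\langle g(z),c(z)\rangle}\,k^{\C_+}_{z}\,d\mu(z)$ and $\int\overline{\langle a(z),c(z)\rangle}\,k^{\D}_{\mathfrak{h}(z)}\,d\mu(z)$. In the diagonal case of Lemmas~\ref{teo_bon_lemma_disc}--\ref{teo_bon_lemma_cont} this is handled by truncating $c$ to finitely many coordinates and invoking the Bessel bound; here one must instead exhaust $\sigma(-A)$ by pieces on which the relevant Radon--Nikodym data are bounded, approximate $g$ and $c$ by simple vector fields, and pass to the limit, the Carleson-measure control established in the second step furnishing both the membership of the kernel superpositions in $H^2$ and the uniform estimates needed for the limit. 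Everything else is the bookkeeping of Theorem~\ref{teo_bon} transcribed to the multiplication model.
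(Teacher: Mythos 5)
Your argument is correct in substance, but it is a genuinely different route from the paper's. The paper's own proof is a two-step reduction: it invokes Proposition~\ref{cor53} (for the continuous system) and \cite[Theorem 5.6]{ItNormOp} (for the discrete system) to conclude that \emph{either} frame hypothesis forces $A$, respectively $\mathfrak{h}(A)$, to be diagonal with respect to an orthonormal basis of eigenvectors, and then simply quotes Theorem~\ref{teo_bon}; the real work is hidden in Proposition~\ref{cor53}, which rests on Lemmas~\ref{mainthm}, \ref{framecond}, \ref{Teo5.5 ItNormOp} and the discretization results of Section~\ref{discretization}. You instead bypass the reduction to atomic spectral measure entirely: you redo Lemmas~\ref{teo_bon_lemma_cont} and~\ref{teo_bon_lemma_disc} in the multiplication model of Theorem~\ref{spectral theorem} and obtain the exact identity $\int_0^\infty|\langle e^{tA}g^i,c\rangle|^2dt=\sum_{n\ge0}|\langle \mathfrak{h}(A)^na^i,c\rangle|^2$ for every $c$, so the two frame conditions transfer with the \emph{same} constants for an arbitrary normal $A$. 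This buys something: your proof is self-contained at the level of the spectral representation, does not need Section~\ref{discretization}, and works verbatim for countably infinite $I$, whereas the paper's appeal to Proposition~\ref{cor53} is literally stated only for $|\G|<\infty$; in your setup the non-existence of frames for non-diagonal normal operators comes out as a corollary (via \cite{ItNormOp} and the identity) rather than being an ingredient. What the paper's route buys is brevity, given that Proposition~\ref{cor53} is needed elsewhere anyway.

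Two caveats you should tighten. First, the second paragraph (Bessel equivalence via ``Carleson-type bounds'') is both vague and unnecessary, and it makes the closing paragraph heavier than it needs to be: for fixed $c$ the interchanges in your two identities only require $z\mapsto\langle (Ug^i)(z),(Uc)(z)\rangle\in L^1(\mu)$, which is automatic by Cauchy--Schwarz in the direct integral, together with $\mathop{\mathrm{Re}}(z)\ge 0$ $\mu$-a.e.; then the continuous identity is Paley--Wiener for the Laplace transform and the discrete one is a Taylor-coefficient computation, no Carleson machinery or exhaustion argument needed. Second, the claim that ``Bessel-ness forces $\sigma(-A)\subset\C_+$'' is not literally true: Bessel-ness only constrains the spectrum $\mu$-a.e.\ on the spectral support of the $g^i$'s; the a.e.\ location in the open half-plane (hence the meaningfulness of the kernels $k^{\C_+}_z$ and of $(I-A)^{-1}$ in your definition $\mathfrak{h}(A)=(I+A)(I-A)^{-1}$) comes from completeness together with the frame lower bound, as in Lemma~\ref{framecond}, or must be treated as implicit in the statement, which is in effect what the paper does.
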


For the proof of this theorem we will need the following result that is an adaptation of \cite[Theorem 5.6]{ItNormOp}.

\begin{proposition}\label{cor53}
		Let $A$ be a bounded normal operator in an infinite dimensional Hilbert space $\HH$.  If  the system of vectors  $\{e^{tA}g \}_{g\in\G,t\in[0,\infty)}$  is a semi-continuous frame for $\mathcal{H}$ for some $ \G \subset \HH$ with $|\G| < \infty$, then  $A=\sum_j-\lambda_jP_j$ where $ P_j $ are orthogonal projections such that $rank\  P_j\leq |\G|$ (i.e. the global multiplicity of $A$ is less than or equal to $ |\G| $). 	
\end{proposition}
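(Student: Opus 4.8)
The strategy is to reduce the semi-continuous frame condition to a statement about the spectral multiplicity of $A$, mimicking \cite[Theorem 5.6]{ItNormOp}. First I would use the Spectral Theorem with multiplicity (Theorem~\ref{spectral theorem}) to write $A$ (up to unitary equivalence) as a direct sum of multiplication operators $N_{\mu_j}^{(j)}$ on $\mathcal{W} = \bigoplus_{j\in\N^*}(L^2(\mu_j))^{(j)}$. Since a semi-continuous frame must in particular be complete, the eigenvalue-type analysis from Lemma~\ref{teo_bon_lemma_cont}, extended to the multiplicity setting, forces each $\mu_j$ to be supported in $\overline{\C_+}$; in fact the frame lower bound will force the supports into $\C_+$ (so $e^{tA}$ is well-behaved), and a Carleson-type finiteness coming from the upper (Bessel) bound will be available — though the key point is the multiplicity bound, so I would keep the argument focused on that.

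The heart of the argument is the following: suppose for contradiction that the global multiplicity of $A$ exceeds $m := |\G|$. Concretely, this means there is a Borel set $E\subset\C_+$ with $\mu_{j_0}(E)>0$ for some $j_0 > m$ (or that $\mu_\infty$ is nontrivial), so that the ``fiber'' Hilbert space over a set of positive measure has dimension strictly greater than $m$. Using the isomorphism $V$ between $H^2(\D)$ and $H^2(\C_+)$ together with the map $\mathfrak{h}$, I would transport the whole situation to the disc, where by Theorem~\ref{Cont disc normal op} (which I may invoke, as it precedes this proposition in the logical order of the section, being the main theorem it serves) the problem becomes a \emph{discrete} dynamical sampling problem $\{(\mathfrak{h}(A))^n a^i\}$ with the same finite number $m$ of generators. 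Then I would apply the structural part of Theorem~\ref{teo_imp_suarez}, specifically condition (1): if $\{(\mathfrak{h}(A))^n a^i\}_{1\le i\le m,\, n\ge 0}$ is a frame, then there is $\beta>0$ such that every pseudo-hyperbolic ball $\Delta(\eta_j,\beta)$ contains at most $m$ points of the eigenvalue sequence counted with multiplicity. A global multiplicity exceeding $m$ contradicts this, since it would place more than $m$ copies of a single eigenvalue (or, in the continuous-spectrum case, more than $m$ independent fiber directions over a set of positive measure collapsing near a single point) inside arbitrarily small balls.

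To make the continuous-spectrum case rigorous — this is the main obstacle — I would argue that if some $\mu_{j_0}$ with $j_0>m$ (including $j_0=\infty$) were not the zero measure, one could find a set $E$ of positive $\mu_{j_0}$-measure of arbitrarily small diameter (by inner regularity and subdividing), restrict $A$ to the corresponding reducing subspace, and observe that the restricted system $\{e^{tA}g\}$ projected to that subspace must still be a frame for it (frames are inherited by orthogonal projections onto reducing subspaces when the subspace is $e^{tA}$-invariant, with the same lower bound). On that subspace the operator has multiplicity $>m$ over $E$, yet only $\le m$ generators, and shrinking $E$ forces a violation of the Carleson/separation obstruction — equivalently, the normalized reproducing kernels over $E$ cannot form a frame because the frame operator's lower bound degenerates. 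I would then conclude that $\mu_j = 0$ for all $j>m$ and $\mu_\infty=0$, so $A = \sum_j -\lambda_j P_j$ with $\mathrm{rank}\,P_j\le m$ as claimed. A cleaner packaging, which I would prefer if the details permit, is to avoid the continuous-spectrum subtlety entirely by noting that completeness of a countable system forces $A$ to have pure point spectrum in this setting (a normal operator admitting a finite generating set under its semigroup whose orbit spans $\mathcal H$ is necessarily diagonalizable), after which only the eigenvalue-multiplicity count via Theorem~\ref{teo_imp_suarez}(1) is needed.
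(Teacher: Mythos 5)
Your reduction is circular. In the paper's logical order, Proposition~\ref{cor53} is a lemma \emph{for} Theorem~\ref{Cont disc normal op}: the proof of that theorem begins by invoking Proposition~\ref{cor53} to conclude that $A$ is diagonal, and only then applies Theorem~\ref{teo_bon}. So you may not invoke Theorem~\ref{Cont disc normal op} here, and neither Theorem~\ref{teo_bon} nor Theorem~\ref{teo_imp_suarez} is available either, since both presuppose exactly what has to be proved, namely that $A$ has an orthonormal basis of eigenvectors. Consequently your central step, ``transport to the disc and apply condition (1) of Theorem~\ref{teo_imp_suarez}'', has no starting point precisely in the case the proposition must exclude, namely when $A$ has a continuous spectral part. (There is also a sign slip: Besselness of the orbits forces the scalar spectral measure of $A$ to vanish on $\overline{\C_+}$, i.e.\ the spectrum lies in the open left half-plane, not in $\overline{\C_+}$; cf.\ Lemma~\ref{framecond}.)

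The genuinely hard step --- that the frame property rules out any non-atomic part of the spectral measure --- is not carried out. Saying that shrinking $E$ ``violates the Carleson/separation obstruction'' has no content here, because those conditions are formulated only for eigenvalue sequences of an already diagonal operator; and your proposed shortcut is false: completeness of $\{e^{tA}g\}_{g\in\G,\,t\geq 0}$ with $|\G|<\infty$ does \emph{not} force pure point spectrum (multiplication by $-z$ on $L^2([1,2])$ is normal, has purely continuous spectrum, and the semigroup orbit of the constant function $1$ is already complete by uniqueness of the Laplace transform); it is the lower frame bound, not completeness, that forces atomicity. The paper's route is different: it adapts \cite[Theorem 5.6]{ItNormOp} almost verbatim. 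Fiber-wise completeness (Lemma~\ref{mainthm}) alone yields the multiplicity bound $\mathrm{rank}\, P_j\leq|\G|$, so no Carleson machinery is needed for that part; Lemma~\ref{framecond} places the scalar spectral measure off $\overline{\C_+}$; and the one step proved by a new argument, Lemma~\ref{Teo5.5 ItNormOp}, proceeds by contradiction through exponential stability combined with the discretization results (Theorems~\ref{prop 6.5.2 tucsnak} and \ref{ScToDscr}): a semi-continuous frame on $[0,\infty)$ for an exponentially stable semigroup would discretize to a frame with finitely many elements for the infinite-dimensional space $\HH$. Your observation that frames pass to reducing subspaces with the same lower bound is correct and potentially useful, but it does not by itself substitute for the atomicity argument.
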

The proof of this proposition follows {\em almost} the same arguments than the ones used in \cite[Theorem 5.6]{ItNormOp}. We state the adaptations  of those results to our context as a reference for the reader.  Since the proofs are almost verbatim, we omit them here. An exception is Lemma \ref{Teo5.5 ItNormOp} which we proved using  a different argument. 

\begin{lemma}\label{mainthm}
 Let $\HH$ be a complex separable Hilbert space,  $A\in \mathcal B(\HH)$ be a normal operator,  and let $\G$ be a countable set of vectors in $\HH$ such that  $\left\{e^{tA}g\right\}_{ g \in \G,\;t\in[0,\infty)}$ is complete in $\HH$. Let  $\mu_\infty,\mu_1,\mu_2,\dots$ be the measures  given in Theorem \ref{spectral theorem} for the operator $A$. Then for every  $1\leq j\leq\infty$ and $\mu_j$-a.e. $z$, the system of vectors  $\{(Ug)_j(z)\}_{g\in \G}$ is complete in   $\ell^2(\Om_j)$.
\end{lemma}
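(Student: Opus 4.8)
The plan is to argue by contraposition within the spectral model of Theorem~\ref{spectral theorem}. Suppose the conclusion fails: there is an index $j_0$ (with $1\le j_0\le\infty$) for which the set
\[
B:=\bigl\{\,z\in\CC:\ \overline{\mathrm{span}}\{(Ug)_{j_0}(z):g\in\G\}\neq \ell^2(\Om_{j_0})\,\bigr\}
\]
has $\mu_{j_0}(B)>0$. I will build a nonzero $h\in\HH$ orthogonal to the whole orbit $\{e^{tA}g\}_{g\in\G,\,t\in[0,\infty)}$, contradicting completeness. The starting observation is that, because $U(e^{tA}g)(z)=e^{tz}(Ug)(z)$ (this is exactly Definition~\ref{normal-eat} read through $U$), a vector $h$ with $Uh=(w_j)_{j}$ satisfies $h\perp e^{tA}g$ for all $g,t$ if and only if $\sum_{j}\int_{\CC} e^{tz}\,\langle (Ug)_j(z),w_j(z)\rangle_{\ell^2(\Om_j)}\,d\mu_j(z)=0$ for all $g\in\G$ and all $t\ge0$. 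Hence it suffices to find $w$, supported on the $j_0$-summand, whose integrand vanishes pointwise in $z$.

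First I would check that $B$ is $\mu_{j_0}$-measurable. The countable family $z\mapsto (Ug)_{j_0}(z)$, $g\in\G$, is a measurable field of vectors in the separable space $\ell^2(\Om_{j_0})$; the orthogonal projection $P(z)$ onto the orthogonal complement of their closed span is then a measurable field of operators (standard for measurable Hilbert bundles, e.g.\ via Gram--Schmidt applied to the countable family). Since $P(z)$ is a projection, $z\mapsto\|P(z)\|$ takes values in $\{0,1\}$ and is measurable, and $B=\{z:\|P(z)\|=1\}$; note $\mu_{j_0}(B)<\infty$ since the spectral measures are finite ($\sigma(A)$ being compact).

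The main obstacle is the measurable choice of a unit orthogonal vector on $B$. I would fix an orthonormal basis $\{\varepsilon_k\}$ of $\ell^2(\Om_{j_0})$ and set $k(z):=\min\{k:P(z)\varepsilon_k\neq0\}$ for $z\in B$, which is well defined (as $P(z)\neq0$ there) and measurable; then $v(z):=P(z)\varepsilon_{k(z)}/\|P(z)\varepsilon_{k(z)}\|$ is a $\mu_{j_0}$-measurable unit vector field on $B$ lying in the orthogonal complement of $\overline{\mathrm{span}}\{(Ug)_{j_0}(z):g\in\G\}$, so that $\langle (Ug)_{j_0}(z),v(z)\rangle=0$ for every $g\in\G$ and $\mu_{j_0}$-a.e.\ $z\in B$.

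Finally I would define $h\in\HH$ by $(Uh)_{j_0}:=v\,\mathbf 1_B$ and $(Uh)_j:=0$ for $j\neq j_0$; this is a genuine element of $\mathcal{W}$ because $\|v\,\mathbf 1_B\|^2=\mu_{j_0}(B)<\infty$, and $h\neq0$ since $\mu_{j_0}(B)>0$. For this $w=Uh$ the integrand $e^{tz}\langle (Ug)_{j_0}(z),v(z)\rangle$ vanishes pointwise on $B$, so $h\perp e^{tA}g$ for all $g\in\G$ and all $t\ge0$, contradicting completeness of the orbit. This proves the contrapositive and hence the lemma. I would remark that the factor $e^{tz}$ (and thus the continuous-time structure) plays no role, the vanishing being pointwise in $z$; this is what makes the argument shorter than the moment/analyticity route one might expect from \cite{ItNormOp}, and is the ``different argument'' alluded to above.
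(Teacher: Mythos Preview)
Your argument is correct and is essentially the standard measurable-selection proof; the paper itself omits the proof of this lemma, saying it is ``almost verbatim'' the one in \cite{ItNormOp}, and your contrapositive via a pointwise orthogonal section $v(z)$ is exactly that approach. One correction: your closing remark that this is ``the `different argument' alluded to above'' misreads the paper --- the phrase ``different argument'' there refers to Lemma~\ref{Teo5.5 ItNormOp} (the statement about $\mu(\{\mathop{\mathrm{Re}} z>-\varepsilon\})>0$), not to the present lemma, so that sentence should be dropped.
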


\begin{lemma}\label{framecond}  Let $\HH$ be a complex separable Hilbert space,  $A\in \mathcal B(\HH)$ be a normal operator,  $\mu$ be  its scalar spectral measure, and $ \G $ a countable system of vectors in $ \HH $.
	If  $\{e^{tA}g \}_{g\in\G,t\in[0,\infty)}$ is complete in $\HH$  and for every $g\in \G$ the system  $\{e^{tA}g \}_{t\in[0,\infty)}$ is Bessel in $\HH$, then	$\mu\left(\C_+\right)=0$.
	Moreover, 
	If $\{e^{tA}g \}_{g\in\G,t\in[0,\infty)}$ is semi-continuous frame in $\HH$, then	$\mu\left( \overline{\C_+}\right)=0$.
		 
\end{lemma}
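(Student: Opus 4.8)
The plan is to translate both assertions into statements about the projection-valued spectral measure $E(\cdot)$ of the normal operator $A$, using that the scalar spectral measure $\mu$ satisfies $\mu(S)=0$ if and only if $E(S)=0$ for every Borel set $S$. Thus $\mu(\C_+)=0$ is the same as $E(\C_+)=0$, and $\mu(\overline{\C_+})=0$ is the same as $E(\overline{\C_+})=0$. I will repeatedly use that $e^{tA}$ commutes with every $E(S)$ and that, for $h$ in the range of $E(S)$, the identity $\langle e^{tA}h,e^{sA}h\rangle=\int_{S}e^{t\bar z+sz}\,d\nu_h(z)$ holds, where $\nu_h(\cdot):=\|E(\cdot)h\|^2$ is the (compactly supported, since $A$ is bounded) scalar spectral measure of $h$.

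For the first assertion I would argue in two steps. \emph{Step one}: for a single $g\in\G$, I claim that the Bessel property of $\{e^{tA}g\}_{t\in[0,\infty)}$ forces $E(\C_+)g=0$. Suppose not; then $h:=E(\{\mathop{\mathrm{Re}}(z)\ge\epsilon\})g\neq 0$ for some $\epsilon>0$, and testing the Bessel inequality only against vectors in the range of $E(\{\mathop{\mathrm{Re}}(z)\ge\epsilon\})$ shows that $\{e^{tA}h\}_{t\ge0}$ is Bessel there. Passing to the (then bounded) synthesis operator $\psi\mapsto\int_0^\infty\psi(t)e^{tA}h\,dt$ and expanding its norm with the displayed identity gives, for every compactly supported $\psi\in L^2(0,\infty)$,
\[
\Big\|\int_0^\infty\psi(t)\,e^{tA}h\,dt\Big\|^2=\int|\widehat\psi(\bar z)|^2\,d\nu_h(z)\le C\,\|\psi\|_{L^2(0,\infty)}^2,\qquad \widehat\psi(z):=\int_0^\infty\psi(t)\,e^{tz}\,dt.
\]
Choosing the matched filter $\psi_T(t):=e^{-\epsilon' t}\mathbbm{1}_{[0,T]}(t)$ with $0<\epsilon'<\epsilon$ keeps $\|\psi_T\|_{L^2}^2\le \tfrac{1}{2\epsilon'}$ bounded, while $|\widehat{\psi_T}(\bar z)|=\big|\tfrac{e^{T(\bar z-\epsilon')}-1}{\bar z-\epsilon'}\big|\to\infty$ uniformly on the compact set $\{\mathop{\mathrm{Re}}(z)\ge\epsilon\}$ (there $\mathop{\mathrm{Re}}(\bar z-\epsilon')\ge\epsilon-\epsilon'>0$); since $\nu_h$ charges that set, the right-hand side blows up, a contradiction. \emph{Step two}: having $E(\C_+)g=0$ for every $g\in\G$, each $g$ — and hence, by commutation, each $e^{tA}g$ — lies in the closed subspace $\operatorname{Ran}E(\{\mathop{\mathrm{Re}}(z)\le0\})$; completeness of $\{e^{tA}g\}_{g\in\G,t}$ then forces this subspace to be all of $\HH$, i.e. $E(\C_+)=0$, which is $\mu(\C_+)=0$.

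For the second assertion, note first that a semi-continuous frame is in particular complete and, discarding all but one $g$ in the upper frame bound, Bessel for each individual $g$; so the first part already yields $\mu(\C_+)=0$, and it remains to rule out mass on the imaginary axis. Set $\mathcal N:=\operatorname{Ran}E(i\R)$ and suppose $\mathcal N\neq0$. Restricting the frame inequalities to $f\in\mathcal N$ (replacing each $g$ by $g^{\mathcal N}:=E(i\R)g$, which does not change $\langle f,e^{tA}g\rangle$) shows that $\{e^{tA}g^{\mathcal N}\}_{g\in\G,t\in[0,\infty)}$ is a frame for $\mathcal N$. On $\mathcal N$ the operator $A$ has purely imaginary spectrum, so $A|_{\mathcal N}=iB$ with $B$ self-adjoint and $e^{tA}|_{\mathcal N}=e^{itB}$ a unitary group. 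Writing $u_g(t):=\langle e^{-itB}f_0,g^{\mathcal N}\rangle$ for a fixed unit $f_0\in\mathcal N$, one checks $\langle e^{isB}f_0,e^{tA}g^{\mathcal N}\rangle=u_g(t-s)$, so the upper bound applied to the shifts $e^{isB}f_0$ (letting $s\to+\infty$) gives $\sum_g\|u_g\|_{L^2(\R)}^2<\infty$, while the lower bound applied to $f=e^{isB}f_0$ reads $c\le\sum_g\int_{-s}^\infty|u_g(\tau)|^2\,d\tau$; letting $s\to-\infty$ the right-hand side tends to $0$ (tails of summable $L^2$ functions), a contradiction. Hence $\mathcal N=0$, i.e. $E(\overline{\C_+})=E(\C_+)+E(i\R)=0$, that is $\mu(\overline{\C_+})=0$.

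The main obstacle is the divergence estimate in step one of the first part: a priori $|\langle h,e^{tA}h\rangle|$ could decay through phase cancellation even though $\|e^{tA}h\|$ grows, so testing naively against $f=h$ is not conclusive. Reformulating the Bessel property through the synthesis operator and the Laplace-type transform $\widehat\psi$ is exactly what lets the matched filter $\psi_T$ convert the exponential growth of $e^{tz}$ on $\{\mathop{\mathrm{Re}}(z)\ge\epsilon\}$ into an unbounded quotient, producing the contradiction while avoiding any entire-function (Cartwright class) machinery. The boundary case $\mathop{\mathrm{Re}}(z)=0$ is genuinely different — there the orbit neither grows nor decays — and is precisely where the stronger frame hypothesis enters, through the unitary shift argument above.
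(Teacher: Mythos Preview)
Your argument is correct. Note that the paper does not actually prove this lemma: it states that the proof is an adaptation, ``almost verbatim,'' of \cite[Theorem~5.6]{ItNormOp} and omits it, so there is nothing in the paper itself to compare against directly. Your approach is self-contained and well adapted to the continuous-time setting: the synthesis-operator/matched-filter argument in Step~1 cleanly converts spectral mass on $\{\mathop{\mathrm{Re}}z\ge\epsilon\}$ into a violation of the Bessel bound without any entire-function machinery, and the unitary-shift argument on $\mathcal N=\operatorname{Ran}E(i\R)$ is the natural continuous analogue of ruling out mass on the unit circle in the discrete problem.

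Two minor remarks. The spectral identity should read $\langle e^{tA}h,e^{sA}h\rangle=\int e^{tz+s\bar z}\,d\nu_h(z)$, and correspondingly $\|T^*\psi\|^2=\int|\widehat\psi(z)|^2\,d\nu_h(z)$ (you have $z$ and $\bar z$ interchanged); since your test function $\psi_T$ is real-valued and the divergence is driven by $\mathop{\mathrm{Re}}(z)=\mathop{\mathrm{Re}}(\bar z)\ge\epsilon$, this is harmless. In the final step, the passage $\sum_{g}\int_{-s}^\infty|u_g(\tau)|^2\,d\tau\to0$ as $s\to-\infty$ relies on dominated convergence over the countable index $g$, justified precisely by the bound $\sum_g\|u_g\|_{L^2(\R)}^2<\infty$ you obtained from the upper frame inequality; your parenthetical ``tails of summable $L^2$ functions'' is the right idea, but one explicit sentence would make the logic airtight.
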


\begin{lemma}\label{Teo5.5 ItNormOp}  Let $\HH$ be a complex separable infinite dimensional Hilbert space,  $A\in \mathcal B(\HH)$ be a normal operator  and $ \G $ a \textbf{finite} subset of vectors in $ \HH $ such that  $\{e^{tA}g\}_{g\in\G, t\in[0,\infty)}$ is a semi-continuous frame for $\HH$. If is $\mu$ the scalar spectral measure of $A$ then for every $\varepsilon>0$, $$\mu(\{z\in\C: \, \mathop{\mathrm{Re}}(z)>-\varepsilon\})>0.$$


\end{lemma}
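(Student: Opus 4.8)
The plan is to prove this statement by contradiction, reducing it to Corollary~\ref{exp stab implies G inf}. Assume the conclusion fails, i.e.\ that there is an $\varepsilon_0>0$ with $\mu(\{z\in\C:\ \mathop{\mathrm{Re}}(z)>-\varepsilon_0\})=0$. Since $\mu$ is the scalar spectral measure of the normal operator $A$, its closed support equals $\sigma(A)$, so this assumption says precisely that $\sigma(A)\subseteq\{z\in\C:\ \mathop{\mathrm{Re}}(z)\le-\varepsilon_0\}$.

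From this spectral inclusion I would deduce exponential stability of the semigroup. By the functional calculus for the normal operator $A$ one has, for every $t\ge 0$, $\|e^{tA}\|=\sup_{z\in\sigma(A)}|e^{tz}|=\sup_{z\in\sigma(A)}e^{t\mathop{\mathrm{Re}}(z)}\le e^{-\varepsilon_0 t}$; equivalently, for $f\in\HH$ with spectral measure $\mu_f$ (absolutely continuous with respect to $\mu$), the Spectral Theorem gives $\|e^{tA}f\|^2=\int_{\sigma(A)}e^{2t\mathop{\mathrm{Re}}(z)}\,d\mu_f(z)\le e^{-2\varepsilon_0 t}\|f\|^2$. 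Hence $\{e^{tA}\}_{t\in[0,\infty)}$ is exponentially stable in the sense of Definition~\ref{def_exp_estable}. But by hypothesis $\{e^{tA}g\}_{g\in\G,\,t\in[0,\infty)}$ is a semi-continuous frame for the infinite dimensional space $\HH$, so Corollary~\ref{exp stab implies G inf} forces $|\G|=\infty$, contradicting the finiteness of $\G$. Therefore no such $\varepsilon_0$ exists, and $\mu(\{z\in\C:\ \mathop{\mathrm{Re}}(z)>-\varepsilon\})>0$ for every $\varepsilon>0$.

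There is no genuine obstacle here: the only point requiring (minor) care is turning ``$\mu$ assigns zero mass to the open half-plane $\{\mathop{\mathrm{Re}}(z)>-\varepsilon_0\}$'' into an honest operator-norm decay estimate $\|e^{tA}\|\le e^{-\varepsilon_0 t}$, and this is exactly where working with $e^{tA}$ rather than an abstract power $A^t$ pays off, the identity $\|e^{tA}f\|^2=\int e^{2t\mathop{\mathrm{Re}}(z)}\,d\mu_f(z)$ being immediate. In essence the lemma is just the contrapositive of the implication ``exponential stability $\Rightarrow$ $|\G|=\infty$'' from Corollary~\ref{exp stab implies G inf}, recorded at the level of the scalar spectral measure; it is consistent with and sharpens Lemma~\ref{framecond}, which already yields $\mu(\overline{\C_+})=0$, i.e.\ $\sigma(A)\subseteq\{\mathop{\mathrm{Re}}(z)<0\}$, by asserting in addition that $\sigma(A)$ must accumulate on the imaginary axis.
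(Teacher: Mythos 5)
Your proposal is correct and follows essentially the same route as the paper: argue by contradiction, note that vanishing of $\mu$ on the half-plane forces $\sigma(A)\subseteq\{\mathop{\mathrm{Re}}(z)\le-\varepsilon_0\}$ and hence exponential stability of $\{e^{tA}\}_{t\ge0}$, and then conclude that a finite $\G$ is impossible in an infinite dimensional $\HH$. The only cosmetic difference is that you invoke Corollary~\ref{exp stab implies G inf} directly, whereas the paper unwinds the same chain by applying Theorem~\ref{prop 6.5.2 tucsnak} to pass to a finite time interval and then Theorem~\ref{ScToDscr} to reach a finite frame, which is exactly what that corollary packages.
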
 
\begin{proof} 
    We proceed by contradiction. Suppose that there exists $\varepsilon_0>0$ such that
$
        \mu\left(\{z\in\C: \, \mathop{\mathrm{Re}}(z)>-\varepsilon_0\}\right)=0.
$
  Since $A$ is a normal operator, this implies that
$
     \{z\in\C: \, \mathop{\mathrm{Re}}(z)>-\varepsilon_0\} \subset \rho(A),    
$
    where $\rho(A)$ is the resolvent set of $A$. 
Therefore, by Definition \ref{def_exp_estable}, there exists $\omega<0$ such that $\{e^{tA}\}_{t\in [0,\infty)}$ satisfies 
    \begin{equation*}
        \|e^{tA}\|\leq Me^{\omega t} \qquad \forall t\geq 0 \quad \text{ for some } M\geq 1. 
    \end{equation*}
   This means that $\{e^{tA}\}_{t\in[0,\infty)}$ is an exponentially stable semigroup. Since $\{e^{tA}g\}_{g\in\G, t\in[0,\infty)}$ is a semi-continuous frame for $\HH$, from Theorem~\ref{prop 6.5.2 tucsnak}, there exists a finite time $0<L<\infty$ such that $\{e^{tA}g\}_{g\in\G, t\in[0,L]}$ is a semi-continuous frame for $\HH$. 
   
   Now, by Theorem \ref{ScToDscr} there exists a {\bf finite} partition $T=\{t_j:j=1,\ldots,n\}$ and $0= t_1< t_2<\ldots<t_n\leq L$ of $[0,L]$ such that $\{e^{tA}g \}_{g\in\G,t\in T}$ is a frame for $\ObsDynSp$. This is a contradiction since $|\G|<\infty$, and $\HH$ is infinite dimensional.
\end{proof}

We are now ready to prove our theorem.

\begin{proof}[Proof of Theorem \ref{Cont disc normal op}]

    Since $A$ is a normal operator, from Proposition \ref{cor53} it follows that if $\{e^{tA} g^i\}_{i\in I, \, t\in[0,\infty)}$ is a semi-continuous frame for $\mathcal{H}$, then $A$ is a diagonal operator. In the same fashion,  from Theorem \cite[Theorem 5.6]{ItNormOp}  if $\{(\mathfrak{h}(A))^n a^i\}_{i\in I, \, n\in \N\cup\{0\}}$ is a frame for $\mathcal{H}$, then $\mathfrak{h}(A)$ (and as a consequence $A$) is a diagonal operator as well. Therefore,  this theorem is a consequence of Theorem \ref{teo_bon}. 
\end{proof}

The conclusion of Theorem \ref{Cont disc normal op} also remains valid without the hypothesis of $A$ being a normal operator if we require instead $A$ to have a Riesz basis (or an unconditional basis) of eigenvectors. That is, if we assume $A$ to be ``diagonalizable''  (Theorem \ref{teo_bon1}). 


\section{Necessary and sufficient conditions to solve the continuous dynamical problem in infinite time}\label{sec_suarez}

\begin{theorem}\label{suarez_cont}
Let $\mathcal{H}$ be a complex separable Hilbert space and let $A\in \mathcal{B}(\mathcal{H})$. Let $g^i\in \mathcal{H}$, $i=1, \dots, m$. Then $\{ e^{tA}g^i\}_{1\le i\le m, n\in \mathbb{N}\cup \{0\}}$ is a frame if and only if
\begin{enumerate}
\item[(i)] There exists an orthonormal basis $\{e_j\}_{j\in J}$ of $\mathcal H$ of eigenvectors of $A$ (where $J$ is a countable set) such that $Ae_j=-\lambda_j e_j$  and $A = \sum_{k} -\lambda_k P_k$ where $P_k$ are orthogonal projections such that rank$ (P_k) \leq m$ (i.e. the multiplicity of each eigenvalue is less or equal than $m$).
\item[(ii)]
\begin{itemize}
\item Each $g^i$ ($i=1, \ldots , m$) is given by 
\begin{equation}\label{niunii2}
    g_j^i:=\langle g^i,e_j\rangle =  d_j  \, \alpha^i_j \sqrt{ \mathop{\mathrm{Re}}(\lambda_j)} \qquad (\forall j\in J),
\end{equation}
where $C^{-1} \le d_j\le C\,$  for some $C\geq 1\,$ and $\,\sum_{i=1}^m|\alpha^i_j|^2 =1\,$
for all $j$.
\item The sequence of eigenvalues $\{ \lambda_j \}_{j\in J}$ of $-A$ lies on $\C_{+}$ and is such that $\sum_{j}  (1-|\mathfrak{h}(\lambda_j)|^2) \delta_{\mathfrak{h}(\lambda_j)}$ is a Carleson measure on $\D$ (where $\mathfrak{h}$ is as in \eqref{M del D en C}).
\item The  sequence $S:=\{ \eta_j:=\mathfrak{h}(\lambda_j) \}$  and the double sequence
$\{ \alpha^i_j : \, j\in J, \,1\le i\le m\}$  in \eqref{niunii2} satisfy the following two conditions:
\begin{enumerate}
\item[(1)] there is $\beta>0$ such that $\Delta(\eta_j , \beta)$ contains no more than m points of $S$
counting repetitions for all $j$.
\item[(2)] there is $0<\gamma <\beta$ such that
if\/ $\eta_{j_1} , \ldots, \eta_{j_p}$ ($p\leq m$) are the points of $S$ in
$\Delta(\eta_{j_1}, \gamma)$  counting repetitions, the related matrix satisfies $\forall (c_{j_1},\ldots, c_{j_p}) \in \mathbb{C}^p$
\begin{equation*}
D
\left\|
    \begin{bmatrix}
    c_{j_1} \\
    \vdots  \\
    c_{j_p} \\
  \end{bmatrix}
\right\|^2_{\mathbb{C}^p}
\le
\left\|
  \begin{bmatrix}
    \alpha^1_{j_1}  & ...  &\alpha^1_{j_p} \\
    \vdots     &            &  \vdots  \\
     \alpha^m_{j_1}  & ...  &\alpha^m_{j_p}\\
  \end{bmatrix}
  \begin{bmatrix}
    c_{j_1} \\
    \vdots \\
    c_{j_p} \\
  \end{bmatrix}
\right\|^2_{\C^{m \times 1}}
\end{equation*}
where $D>0$ does not depend on $p$ or the $\alpha$'s .
\end{enumerate}
\end{itemize}
\end{enumerate}
\end{theorem}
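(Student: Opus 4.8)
The plan is to combine the continuous-to-discrete reduction of Theorem~\ref{Cont disc normal op} with the discrete characterization recalled in Theorem~\ref{teo_imp_suarez}, taking care to track how the diagonalization hypotheses and the defining formulas for the generating vectors transform. First I would argue the ``only if'' direction: assuming $\{e^{tA}g^i\}_{1\le i\le m,\, t\in[0,\infty)}$ is a semi-continuous frame for $\mathcal H$, I apply Theorem~\ref{Cont disc normal op} to conclude that $\{(\mathfrak{h}(A))^n a^i\}_{i,\,n\in\N\cup\{0\}}$ is a frame for $\mathcal H$, where $(Ua^i)(z)=\frac{\sqrt 2}{1+z}(Ug^i)(z)$. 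However, a frame is in particular complete, so by Proposition~\ref{cor53} the operator $A$ must be diagonal with global multiplicity at most $m$; equivalently there is an orthonormal basis $\{e_j\}_{j\in J}$ of eigenvectors, $Ae_j=-\lambda_j e_j$, and $A=\sum_k -\lambda_k P_k$ with $\mathrm{rank}(P_k)\le m$. This gives item~(i). Once $A$ is diagonal, the operator $\mathfrak h(A)$ is also diagonal with eigenvalues $\eta_j:=\mathfrak h(\lambda_j)$, and the coordinates of $a^i$ with respect to $\{e_j\}$ are precisely $a^i_j=\frac{\sqrt 2}{1+\lambda_j}g^i_j$ as in Theorem~\ref{teo_bon}. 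Applying Theorem~\ref{teo_imp_suarez} to $\mathfrak h(A)$ and the vectors $a^1,\dots,a^m$ yields: the $\eta_j$ lie in $\D$ (hence $\lambda_j\in\C_+$), the measure $\sum_j(1-|\eta_j|^2)\delta_{\eta_j}$ is Carleson on $\D$, and the two geometric/matrix conditions (1)--(2) on $S=\{\eta_j\}$ and $\{\alpha^i_j\}$ hold; moreover each $a^i_j=\tilde d_j\,\overline{\alpha^i_j}(1-|\eta_j|^2)^{1/2}$ with $\sum_i|\alpha^i_j|^2=1$ and $C^{-1}\le\tilde d_j\le C$.

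The remaining work is a bookkeeping computation: I must convert the representation $a^i_j=\tilde d_j\,\overline{\alpha^i_j}(1-|\eta_j|^2)^{1/2}$ into the claimed representation $g^i_j=d_j\,\alpha^i_j\sqrt{\mathrm{Re}(\lambda_j)}$. Using $a^i_j=\frac{\sqrt 2}{1+\lambda_j}g^i_j$ and the identity $1-|\mathfrak h(\lambda_j)|^2=\frac{4\,\mathrm{Re}(\lambda_j)}{|1+\lambda_j|^2}$ (which follows from $\mathfrak h(z)=\frac{1-z}{1+z}$), one obtains
\[
g^i_j \;=\; \frac{1+\lambda_j}{\sqrt 2}\,a^i_j \;=\; \frac{1+\lambda_j}{\sqrt 2}\,\tilde d_j\,\overline{\alpha^i_j}\,\frac{2\sqrt{\mathrm{Re}(\lambda_j)}}{|1+\lambda_j|} \;=\; \Bigl(\sqrt 2\,\tilde d_j\,\tfrac{1+\lambda_j}{|1+\lambda_j|}\Bigr)\overline{\alpha^i_j}\,\sqrt{\mathrm{Re}(\lambda_j)}.
\]
Absorbing the unimodular factor $\frac{1+\lambda_j}{|1+\lambda_j|}$ and the complex conjugation into a relabeled $\alpha^i_j$ (which does not affect $\sum_i|\alpha^i_j|^2=1$ nor the conditions (1)--(2), since those only involve the moduli and the matrix up to unitary diagonal scaling) and setting $d_j:=\sqrt 2\,\tilde d_j$, one recovers \eqref{niunii2} with $C$ replaced by $\sqrt 2\,C$. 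Since $A$ is bounded, $|1+\lambda_j|$ is bounded above and below away from zero, so all these manipulations are uniform in $j$ and preserve the Carleson property of $\sum_j(1-|\eta_j|^2)\delta_{\eta_j}$; this establishes item~(ii).

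For the ``if'' direction I would simply run the argument in reverse: given (i) and (ii), the operator $A$ is diagonal with $\{\lambda_j\}\subset\C_+$, so Theorem~\ref{teo_bon} applies and $\{e^{tA}g^i\}$ is a semi-continuous frame if and only if $\{(\mathfrak h(A))^n a^i\}$ is a frame. The defining data of (ii) translate back (via the same identities) into exactly the hypotheses of Theorem~\ref{teo_imp_suarez} for the diagonal operator $\mathfrak h(A)$ with eigenvalues $\eta_j=\mathfrak h(\lambda_j)\in\D$ and vectors $a^i$, so that theorem guarantees $\{(\mathfrak h(A))^n a^i\}$ is a frame, and we are done. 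The only genuinely delicate point — the ``main obstacle'' — is verifying that the passage between the normalization $(1-|\eta_j|^2)^{1/2}$ on the disc side and $\sqrt{\mathrm{Re}(\lambda_j)}$ on the half-plane side, together with the unimodular/conjugation corrections, leaves conditions (1) and (2) genuinely invariant; this requires observing that replacing the matrix $[\alpha^i_{j_k}]$ by $[\,\overline{\alpha^i_{j_k}}\,u_{j_k}]$ for unimodular $u_{j_k}$ amounts to right-multiplication by a unitary diagonal matrix followed by entrywise conjugation, neither of which changes the norm bound in (2) or the counting in (1). I would make this explicit as a short remark inside the proof rather than a separate lemma.
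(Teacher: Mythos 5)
Your proposal is correct and follows essentially the same route as the paper's proof: Proposition~\ref{cor53} gives condition (i) and reduces to a diagonal operator, Theorem~\ref{teo_bon} (via Theorem~\ref{Cont disc normal op}) transfers the problem to the discrete orbit $\{(\mathfrak{h}(A))^n a^i\}$, Theorem~\ref{teo_imp_suarez} characterizes that frame, and the identity $1-|\mathfrak{h}(\lambda_j)|^2=\tfrac{4\,\mathrm{Re}(\lambda_j)}{|1+\lambda_j|^2}$ handles the normalization bookkeeping. Your explicit verification that absorbing the unimodular factor and the conjugation into the $\alpha^i_j$ leaves $\sum_i|\alpha^i_j|^2=1$ and conditions (1)--(2) unchanged is exactly the point the paper treats more briefly, so there is no gap.
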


\begin{remarkx}
    In particular, when $m=1$, for $g^1_j=\sqrt{ \mathop{\mathrm{Re}}(\lambda_j)}$ the orbit $\{ e^{tA} g^1\}_{t\in[0,\infty)}$ is a semi-continuous frame if and only if the sequence of normalized reproducing kernels $\{{k^{\C_+}_{\lambda_j}}/{\|k^{\C_+}_{\lambda_j}\|} \}$ is a Riesz sequence. 
    This happens if and only if $\{ \lambda_j\}$ is an interpolating sequence. 
\end{remarkx}

\begin{proof}
    From Proposition \ref{cor53} (i) is a necessary condition to obtain a frame. Moreover, it allows us to assume without loss of generality that $A\in \mathcal{B}(\ell^2(J))$ is a diagonal operator with respect to the standard basis of $\ell^2(J)$ as in Theorem \ref{teo_imp_suarez}.

    Now, by Theorem \ref{teo_bon}, we know that  $\{e^{tA}g^i\}$ is a semi-continuous frame for $\mathcal{H}$ if and only if $\{(\mathfrak{h}(A))^na^i\}$ is frame for $\mathcal{H}$ with $g_j^i$ and  $a_j^i$  satisfying the relation $g_j^i=\frac{1+\lambda_j}{\sqrt{2}}a_j^i$.
    Also, from Theorem \ref{teo_imp_suarez}, $\{(\mathfrak{h}(A))^na^i\}$ is frame if and only if  $a_j^i$ satisfy (\ref{niunii}) with $\eta_j:=\mathfrak{h}(\lambda_j)$ and the conditions above. Notice that $\mathfrak{h}(\lambda_j) \subset \mathbb{D}$ if and only if $\lambda_j\in \C_+$ for every $j$.
    Therefore, we only need to prove (\ref{niunii2}). This follows from considering 
    \begin{align*}
        g_j^i&=\frac{1+\lambda_j}{\sqrt{2}}a_j^i = \frac{1+\lambda_j} d_j\alpha_j (1-|\mathfrak{h}(\lambda_j)|^2)^{\frac{1}{2}}\\
        &= d_j\alpha_j \sqrt{\frac{1+\lambda_j}{1+\overline{\lambda_j}}}\sqrt{2\mathop{\mathrm{Re}}(\lambda_j)},
    \end{align*}
    and noticing that 
    on one hand, since the only conditions on $d_j$ are to be bounded above and below it is equivalent to take either  $d_j$ or $\sqrt{2}d_j$. Further, on the other hand,  since $\left| \sqrt{ \frac{1+\lambda_j}{1+\overline{\lambda_j}}}\right| = 1$ it is equivalent to take $\alpha_j^i$ or $\sqrt{\frac{1+\lambda_j}{1+\overline{\lambda_j}}}\alpha_j^i$. 

This concludes the proof. 
\end{proof}

\bigskip

\textsc{Keywords:}
Dynamical Sampling, Continuous Frame, Control Theory, Interpolating Sequence, Hardy Space.
\textsc{MSC 2010 Classification}: {42C15, 93B05, 93B07}\\

 \bigskip

The research for this article has mainly been carried out while R. D\'i az Mart\'i n was  holding a postdoctoral fellowship from CONICET, and  I. Medri was holding postdoctoral fellowship from CONICET and Vanderbilt University. 
U. Molter was supported by Grants UBACyT 20020170100430BA (University of Buenos Aires), PIP11220150100355 (CONICET) and PICT 2014-1480.\\ 

\bigskip

\textsc{Roc\'io D\'iaz Mart\'in is at the Intituto Argentino de Matemática (IAM) -- CONICET, Saavedra 15, CABA, Argentina and Universidad Nacional de C\'ordoba, Ciudad Universitaria, C\'ordoba, Argentina.}

\textit{E-mail adress:} \url{rpd0109@famaf.unc.edu.ar}\\

\textsc{Ivan Medri is at Department of Mathematics, Vanderbilt University, Nashville, TN 37240-0001, USA.}

\textit{E-mail adress:} \url{cocarojasjorgeluis@gmail.com}\\

\textsc{Ursula Molter is at Dto. de Matem\'atica, FCEyN, Universidad de Buenos Aires and IMAS -- CONICET-UBA, Ciudad Universitaria, CABA, Argentina.}

\textit{E-mail adress:} \url{umolter@dm.uba.ar}

\end{document}